\numberwithin{equation}{section}
\newtheorem{theorem} {Theorem} [section]
\newtheorem{proposition}[theorem]{Proposition}
\newtheorem{corollary}  [theorem]     {Corollary}
\newtheorem{lemma}  [theorem]     {Lemma}
\newtheorem{remark}  [theorem]     {Remark}
\newtheorem{conjecture}  [theorem]     {Conjecture}
\theoremstyle{definition}
\newtheorem{definition}  [theorem]     {Definition}
\renewcommand*\backref[1]{}
\renewcommand*\backrefalt[4]{ \ifcase #1 \or (cited on page #2) \else (cited on pages #2) \fi}
\begin{document}

\title{Curvature characterization of Hermitian manifolds with Bismut parallel torsion}

\begin{abstract}
In this article, we study Hermitian manifolds whose Bismut connection has parallel torsion, which will be called {\em Bismut torsion parallel manifolds,} or {\em BTP} manifolds for brevity. We obtain a necessary and sufficient condition characterizing BTP manifolds in terms of Bismut curvature tensor alone in Theorem \ref{theorem1.1}. We also present examples and discuss some general properties for BTP manifolds, as well as give a classification result for non-balanced BTP threefolds in Theorem \ref{3DNBBTP}.
\end{abstract}

\author{Quanting Zhao}
\address{Quanting Zhao. School of Mathematics and Statistics, and Hubei Key Laboratory of Mathematical Sciences, Central China Normal University, P.O. Box 71010, Wuhan 430079, P. R. China.} \email{zhaoquanting@126.com;zhaoquanting@mail.ccnu.edu.cn}
\thanks{$\ast$ Zheng is the corresponding author. Zhao is partially supported by NSFC with the grant No.\,12171180, 12371079, 12522107 and by the Fundamental Research Funds for the Central Universities CCNU25JCPT030. Zheng is partially supported by NSFC grants 12141101 and 12471039, by Chongqing Normal University grant 24XLB026, and by the 111 Project D21024.}

\author{Fangyang Zheng$^{\ast}$}
\address{Fangyang Zheng. School of Mathematical Sciences, Chongqing Normal University, Chongqing 401331, China}
\email{20190045@cqnu.edu.cn} \thanks{}


\subjclass[2020]{ 53C55 (Primary), 53C05 (Secondary)}
\keywords{Hermitian manifolds, Bismut connection, Bismut torsion parallel, Bismut K\"ahler-like,
Bismut holonomy abelian, twisted product of Sasakian manifolds}

\maketitle

\tableofcontents

\section{Introduction and statement of results}

Given a smooth manifold equipped with a connection, the parallelness of torsion often forms strong geometric restrictions. For instance, by the  result of Kamber and Tondeur in \cite{KT}, if a manifold admits a complete connection that is flat and having parallel torsion, then its universal cover must be a Lie group (equipped with a flat left-invariant connection), and the converse is also true. Another example is the classic result of Ambrose and Singer \cite{AS}, which states that if a complete Riemannian manifold admits a metric connection with parallel torsion and curvature, then its universal cover is a homogeneous Riemannian manifold (and vice versa). The complex version of this (proved by Sekigawa in \cite{Sekigawa}) is also true: if a complete Hermitian manifold admits a {\em Hermitian} connection (that is, a connection compatible with both the metric and the almost complex structure) which has parallel torsion and curvature, then the universal cover is a homogeneous Hermitian manifold (and vice versa).

On a given Hermitian manifold, the {\em Bismut connection} (also known as Strominger connection, see \cite{Bismut,Strominger}) is the unique Hermitian connection with totally skew-symmetric torsion. It serves as a bridge between Levi-Civita connection, which is the unique metric connection that is torsion free, and Chern connection, the unique metric connection compatible with the complex structure. When the metric is K\"ahler, these three canonical connections coincide, but when the metric is not K\"ahler, they are mutually distinct, leaving us with three different kinds of geometry.

In this article, we will focus on Bismut connection, and we want to know when will the Bismut connection have parallel torsion. For convenience, we will call the class of Hermitian manifolds whose Bismut connection has parallel torsion simply as {\em Bismut torsion parallel} manifolds, or {\em BTP} manifolds for brevity.

There have been a number  of studies on parallel skew-symmetric torsion in the setting of Riemannian geometry or almost Hermitian geometry, by Friedrich, Agricola, Ferreira, Cleyton, Moroianu, Semmelmann, Schoemann and others, see for example \cite{AF04}, \cite{AF14}, \cite{AFF}, \cite{AFS}, \cite{AFK}, \cite{CMS}, \cite{Sch} and the references therein. Here we carry out the investigation in the slightly more delicate Hermitian situation, where the emphasis is on complex structures, with the hope of understanding the Bismut geometry on complex manifolds.

Let $(M^n,g)$ be a Hermitian manifold. Denote by $\nabla^b$ the Bismut connection and by $T^b$, $R^b$ its torsion and curvature tensor, respectively. Let us introduce the notation
\begin{equation*}
Q_{X\overline{Y}Z\overline{W}} = R^b_{X\overline{Y}Z\overline{W}} - R^b_{Z\,\overline{Y}X\overline{W}}, \ \ \ \ \ \mathrm{Ric}(Q)_{X\overline{Y}} = \sum_{i=1}^n \big( R^b_{X\overline{Y}e_i\overline{e_i}} - R^b_{e_i\overline{Y}X\overline{e_i}} \big)
\end{equation*}
where $X,Y,Z$ and $W$ are type $(1,0)$ tangent vectors on $M$ and $\{ e_1, \ldots , e_n\}$ is any local unitary frame. Denote by $\omega$ the K\"ahler form of $g$.
Recall that {\em Gauduchon's torsion $1$-form} is the global $(1,0)$-form $\eta$ on $M$ defined by $\partial \omega^{n-1} = - \eta \wedge \omega^{n-1}$.
The metric $g$ is said to be {\em balanced} if $\eta =0$, or equivalently, if $d \omega^{n-1} = 0$.
Let $\chi$ be the type $(1,0)$ vector field on $M$ dual to $\eta$, namely, $\langle Y, \overline{\chi}\rangle = \eta (Y)$ for any $Y$ where $g = \langle , \rangle $ is the metric, extended bi-linearly over ${\mathbb C}$.

With these notations specified, we can now state the main result of this article, which says that the BTP condition $\nabla^b T^b=0$ is equivalent to four equalities involving only $R^b$:

\begin{theorem} \label{theorem1.1}
Let $(M^n,g)$ be a Hermitian manifold. The BTP condition $\nabla^bT^b=0$ is equivalent to the combination of the following
\begin{eqnarray}
&& R^b_{XYZ\,\overline{W}}=0 ,\label{eq:1.1} \\
&& R^b_{X\,\overline{Y}Z\,\overline{W}}=R^b_{Z\,\overline{W}X\overline{Y}} ,\label{eq:1.2} \\
&& \nabla^b \mathrm{Ric}(Q)=0, \label{eq:1.3}\\
&& \mathrm{Ric}(Q)_{\chi \overline{Y}} =0,  \label{eq:1.4}
\end{eqnarray}
for any type $(1,0)$ tangent vectors $X,Y,Z,W$.
\end{theorem}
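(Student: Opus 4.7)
The strategy is to analyse $\nabla^b T^b$ through the two Bianchi identities for the Bismut connection, which, since $T^b$ is totally skew-symmetric, take the form
\begin{align*}
\sigma_{X,Y,Z} R^b(X,Y)Z &= \sigma_{X,Y,Z}\bigl[(\nabla^b_X T^b)(Y,Z)+T^b(T^b(X,Y),Z)\bigr],\\
\sigma_{X,Y,Z}(\nabla^b_X R^b)(Y,Z) &= \sigma_{X,Y,Z} R^b(T^b(X,Y),Z).
\end{align*}
Because $T^b$ corresponds to a real $3$-form of pure type $(2,1)+(1,2)$, each of these identities splits into several type-homogeneous pieces, and I would expand them in a local unitary frame to isolate the components that control the various slots of $\nabla^b T^b$.

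For the forward implication, assume $\nabla^b T^b=0$. Plugging $X,Y,Z$ of type $(1,0)$ into the first Bianchi identity and pairing with a $(0,1)$ vector $\overline{W}$, the quadratic $T^b\cdot T^b$ term drops out for purely typographical reasons; this yields the cyclic vanishing of $R^b_{XYZ\overline{W}}$, which combined with the Hermitian skew-symmetries forces \eqref{eq:1.1}. Condition \eqref{eq:1.2} comes from the $(2,2)$-type piece of the second Bianchi identity together with the standard conjugation symmetry of $R^b$. Finally \eqref{eq:1.3} and \eqref{eq:1.4} are algebraic consequences of $\nabla^b T^b=0$: every tensor built polynomially from $T^b, \overline{T^b}$ is then Bismut-parallel, and using \eqref{eq:1.1}--\eqref{eq:1.2} one expresses $\mathrm{Ric}(Q)$ as such a contraction, the $\chi$-slot of which cancels by a Jacobi-type identity.

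For the reverse implication one decomposes $\nabla^b T^b$ into the two pieces $\nabla^b_X T^b$ and $\nabla^b_{\overline{X}} T^b$ and matches each with an equation. The $(3,1)$-part of the first Bianchi, read under the hypothesis \eqref{eq:1.1}, equates the totally skew part of the first piece to a quadratic expression in $T^b$ that is also skew, so it determines $\nabla^b_X T^b$ up to a trace. That trace lives in a one-dimensional space spanned by $\chi$, and is killed by \eqref{eq:1.4}. For the second piece, the $(2,2)$-part of the first Bianchi, combined with \eqref{eq:1.2}, becomes symmetric in a controlled manner and forces the non-trace components of $\nabla^b_{\overline{X}} T^b$ to vanish; the surviving trace is then handled by \eqref{eq:1.3}, which is exactly the statement that the one remaining scalar--built from a Bismut-Ricci of the kind appearing in $\mathrm{Ric}(Q)$--is $\nabla^b$-parallel and, combined with \eqref{eq:1.4}, vanishes on $\chi$.

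The main obstacle is precisely this bookkeeping in the reverse direction: the four conditions are designed so that the Bianchi identities alone kill the ``generic'' components of $\nabla^b T^b$, while the remaining $U(1)$-trace directions aligned with the Gauduchon form $\eta$ (equivalently with $\chi$) are handled by \eqref{eq:1.3}--\eqref{eq:1.4}. Verifying that these really cover every irreducible component of $\nabla^b T^b$, with the correct multiplicities, is where the index-by-index computation has to be carried out and where the delicate interplay between the algebraic symmetries of $R^b$ and the trace conditions on $\mathrm{Ric}(Q)$ must be made explicit.
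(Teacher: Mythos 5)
Your forward direction is essentially workable (it is close to what the paper does via Lemma \ref{lemma3.1} and Proposition \ref{prop3.6}), but two details are off: the quadratic term $T^b(T^b(X,Y),Z)$ does \emph{not} drop out ``for typographical reasons'' when $X,Y,Z$ are of type $(1,0)$, since $T^b(X,Y)$ is again of type $(1,0)$ and pairs nontrivially with $\overline{W}$; it vanishes only because $\nabla^bT^b=0$ kills the cyclic sum in \eqref{pT}. Moreover, cyclic vanishing of $R^b_{XYZ\overline{W}}$ together with skew-symmetry in $(X,Y)$ does not force $R^b_{XYZ\overline{W}}=0$ (there is no skew symmetry in the mixed pair $(Z,\overline{W})$); one needs the component identity \eqref{curv20}, which expresses each individual $R^b_{ijk\bar{\ell}}$ in terms of $\nabla^b_{1,0}T$ and the quadratic torsion sum. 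These are repairable.

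The genuine gap is in the reverse direction. Your picture --- Bianchi identities kill the ``generic'' components of $\nabla^bT^b$ and only trace directions remain, to be handled by \eqref{eq:1.3}--\eqref{eq:1.4} --- is inverted. What the identities actually give (under \eqref{eq:1.1} and \eqref{eq:1.2}) is $T^j_{ik,\bar{\ell}}=-\tfrac{1}{3}\big(P^{j\ell}_{ik}+Q_{i\bar{j}k\bar{\ell}}\big)$, where $P$ is quadratic in the torsion and $Q$ is the full curvature defect; so proving $\nabla^b_{0,1}T=0$ amounts to proving the \emph{full tensor} identity $Q=-P$, whereas the hypotheses \eqref{eq:1.3}--\eqref{eq:1.4} only control the trace $\mathrm{Ric}(Q)$ and its parallelism. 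There is no pointwise algebraic decomposition that bridges this; the paper closes the gap with a second-order (Bochner-type) argument: first showing $|\eta|^2$ is constant, then establishing $\nabla^b\eta=0$ by proving the exact identity $9\sum_{k,\ell}|\eta_{k,\bar{\ell}}|^2=|B-\phi-\phi^{\ast}-\mathrm{Ric}(Q)|^2=0$ through a chain of scalar identities obtained by contracting \eqref{tr_etaT} and its relatives against $\overline{\eta}$ and $\overline{T}$, then showing $|T|^2$, $\phi$, $B$ are parallel, and finally proving $\sum_{i,j,k,\ell}|T^j_{ik,\bar{\ell}}|^2=0$ using the commutation formulae of Lemma \ref{commt} (which themselves require \eqref{eq:1.1}). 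None of this mechanism is present in your sketch, and the step you defer as ``bookkeeping'' is in fact the entire content of the hard implication; as written, the plan would stall at the point where trace information must be upgraded to full-tensor vanishing.
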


\begin{remark} \label{remark1.2}
Theorem \ref{theorem1.1} can be regarded as a generalization of the main result of  \cite{ZhaoZ19Str}, which is a confirmation of the AOUV Conjecture.
\end{remark}

To be more precise, first let us recall that (cf. \cite{AOUV,FT,FTV,YZZ,ZhaoZ19Str,ZhaoZ23}) a Hermitian manifold is said to be {\em  Bismut K\"ahler-like} (or {\em BKL} for brevity), if its Bismut curvature $R^b$ obeys all K\"ahler symmetries, namely,
\begin{equation*}
 R^b_{XYZ\,\overline{W}}=0, \ \ \ \  R^b_{X\,\overline{Y}Z\,\overline{W}}=R^b_{Z\,\overline{Y}X\,\overline{W}}
\end{equation*}
for any type $(1,0)$ tangent vector $X,Y,Z,W$. A conjecture raised by Angella, Otal, Ugarte and Villacampa states  that:

\begin{conjecture}[{\bf AOUV \cite{AOUV}}] \label{conj1.3}
Compact BKL manifolds must be pluriclosed, namely, BKL $\Longrightarrow$ pluriclosed.
\end{conjecture}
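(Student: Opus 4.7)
Pluriclosedness $\partial\bar\partial\omega=0$ is equivalent to the closedness of the Bismut torsion $3$-form $c(X,Y,Z)=g(T^{b}(X,Y),Z)$. Indeed, up to sign $c = -d^{c}\omega$, so $dc = -dd^{c}\omega = -2i\,\partial\bar\partial\omega$; since $\omega$ is of type $(1,1)$, the form $c$ has pure bidegree $(2,1)+(1,2)$ and $dc$ is pure $(2,2)$. My target is therefore to show that BKL combined with compactness forces $dc(X,Y,\bar V,\bar U)=0$ for all $(1,0)$ tangent vectors $X,Y$ and $(0,1)$ tangent vectors $\bar V,\bar U$.

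The algebraic input is the first Bianchi identity for the Bismut connection. Being a metric connection with totally skew torsion, it satisfies
\[
  \mathfrak{S}_{X,Y,Z}\, R^{b}(X,Y,Z,W) \;=\; \mathfrak{S}_{X,Y,Z}\bigl\{(\nabla^{b}_{X} c)(Y,Z,W) + c(T^{b}(X,Y),Z,W)\bigr\},
\]
which, combined with the standard formula expressing $dc$ via $\nabla^{b}c$ plus a torsion correction (from $[X,Y]=\nabla^{b}_{X}Y-\nabla^{b}_{Y}X-T^{b}(X,Y)$), rearranges into a pointwise identity of the schematic form $dc = \mathfrak{S} R^{b} + \Sigma_{c}$ with $\Sigma_{c}$ quadratic in $c$. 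I specialise to $Z=\bar V,\ W=\bar U$. Hypothesis (1.1) kills $R^{b}_{XYZ\bar W}$ outright, while (1.2) combined with complex conjugation strengthens to the full pair-symmetry $R^{b}_{X\bar Y Z\bar W}=R^{b}_{Z\bar W X\bar Y}$. Using the first-pair antisymmetry of $R^{b}$, the cyclic sum collapses onto the single component $R^{b}_{XY\bar V\bar U}$, and a second application of first Bianchi identifies this as a $\nabla^{b}c$-term plus a torsion-quadratic in $c$. Gathering, one obtains the pointwise identity
\[
  dc(X,Y,\bar V,\bar U) \;=\; \Psi(\nabla^{b}c,\, c\otimes c)(X,Y,\bar V,\bar U),
\]
in which all $R^{b}$-contributions have been eliminated.

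Compactness enters via integration. A judicious contraction of the pointwise identity yields a scalar equation relating $|\partial\bar\partial\omega|^{2}$ to a divergence of an auxiliary quantity built from $c$. Integrating over $M$ and applying Stokes, using Gauduchon's torsion formula $\partial\omega^{n-1}=-\eta\wedge\omega^{n-1}$ and the dual vector field $\chi$ to eliminate the boundary contributions, what remains is a sign-definite $L^{2}$ expression whose vanishing forces $dc\equiv 0$, i.e., $\omega$ is pluriclosed.

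The main obstacle will be this global step: choosing the contraction and the auxiliary form so that the integrated identity produces a sign-definite $L^{2}$ expression, without over-concluding $\partial\omega=0$ (which would be false, given that complex parallelisable Lie groups equipped with bi-invariant metrics are Bismut-flat non-K\"ahler BKL examples). The algebraic Step~2 is essentially bookkeeping once the BKL symmetries are combined with first Bianchi; the delicate point there is that symmetry (1.2), together with its complex-conjugate form $R^{b}_{X\bar Y Z\bar W}=R^{b}_{Z\bar W X\bar Y}$, is crucial for reducing $R^{b}_{XY\bar V\bar U}$ to pure torsion data.
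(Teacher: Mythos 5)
Your proposal is a strategy outline rather than a proof, and the gap sits exactly where the theorem's entire difficulty lives. The pointwise identity you derive from the first Bianchi identity is correct and is essentially Proposition \ref{plcld} of the paper: under BKL the cyclic curvature sum collapses (since $R^b_{XY\overline{V}\,\overline{U}}=0$ for any Hermitian connection and the remaining two terms form a $Q$-type difference which vanishes), leaving
$dc(X,Y,\overline{V},\overline{U})$ expressed through $\nabla^b_{0,1}T$ and the torsion-quadratic tensor $P^{j\ell}_{ik}$. But this identity by itself proves nothing: to conclude $dc=0$ one must show that the $\nabla^b c$-term and the quadratic term $P$ both vanish, and that is precisely the content of the hard implication BKL $\Rightarrow$ BTP (Theorem \ref{theorem1.1} specialized to $Q=0$), whose proof occupies several pages of delicate manipulation of the tensors $A$, $B$, $\phi$, $\mathrm{Ric}(Q)$ together with the commutation formulae of Lemma \ref{commt}. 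Your plan replaces that computation with the hope that "a judicious contraction" of the identity integrates to a sign-definite $L^2$ expression. No such contraction is exhibited, and none is evident: $P^{j\ell}_{ik}$ carries no obvious sign (it satisfies $P^{j\ell}_{ik}=\overline{P^{ik}_{j\ell}}$ but is not a square), and the $\nabla^b c$-term cannot be discarded by Stokes without first knowing something like $\nabla^b\eta=0$, which in the paper is itself established only through the full pointwise argument. You flag this step yourself as "the main obstacle," which is an accurate self-assessment: the proof is missing at its crucial point.

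For comparison, the paper's route is entirely pointwise and needs no compactness: BKL means condition \eqref{eq:1.1} together with $Q=0$, which makes all four hypotheses of Theorem \ref{theorem1.1} trivially hold, so the metric is BTP; then $\nabla^b_{0,1}T^c=0$ combined with $Q=0$ forces $P^{j\ell}_{ik}=0$ via \eqref{parallel_01}, and Proposition \ref{plcld} gives $\partial\overline{\partial}\omega=0$. If you want to salvage your approach, the honest version of it is to prove $\nabla^b T^b=0$ first; an integration argument that bypasses parallel torsion would be a genuinely new proof, but you have not supplied one.
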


The metric $g$ is said to be {\em pluriclosed} if $\partial \overline{\partial} \omega =0$, where $\omega$ is the K\"ahler form of $g$. See \cite{BFG,BPT,FG,Ye,IS,Yang} and the references therein for recent development regarding to pluriclosed metrics and related topics. Note that the BKL condition means (\ref{eq:1.1}) plus $Q=0$. In this case all the four conditions (\ref{eq:1.1})\,--\,(\ref{eq:1.4}) are satisfied. So as an immediate corollary to Theorem \ref{theorem1.1}, we have ``BKL $\Longrightarrow$ BTP". On the other hand, as noticed in  \cite{ZhaoZ19Str}, it is easy to show that ``BTP $\Longleftrightarrow $ pluriclosed" under the BKL assumption, and ``BTP + pluriclosed $\Longrightarrow $ BKL". Hence, as a direct consequence of Theorem \ref{theorem1.1}, we have the following

\begin{corollary} \label{cor1.4}
It holds on Hermitian manifolds that BKL $=$ BTP $+$ pluriclosed.
\end{corollary}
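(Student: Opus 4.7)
The plan is to assemble the corollary from Theorem \ref{theorem1.1} together with the two elementary facts recalled just above the statement, without any fresh computation. The substance of the corollary lies entirely in Theorem \ref{theorem1.1}; the corollary itself is pure bookkeeping.

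For the forward direction BKL $\Longrightarrow$ BTP $+$ pluriclosed, I first observe that the BKL hypothesis says exactly that \eqref{eq:1.1} holds and $Q \equiv 0$. Since $Q \equiv 0$ forces $\mathrm{Ric}(Q) \equiv 0$, conditions \eqref{eq:1.3} and \eqref{eq:1.4} are trivially satisfied. Condition \eqref{eq:1.2} follows by a short manipulation: the $X \leftrightarrow Z$ symmetry $R^b_{X\overline{Y}Z\overline{W}} = R^b_{Z\overline{Y}X\overline{W}}$ (which is $Q = 0$) combined with its conjugate $R^b_{X\overline{Y}Z\overline{W}} = R^b_{X\overline{W}Z\overline{Y}}$ (obtained from the Hermitian symmetry $\overline{R^b_{X\overline{Y}Z\overline{W}}} = R^b_{Y\overline{X}W\overline{Z}}$ and renaming) yields $R^b_{X\overline{Y}Z\overline{W}} = R^b_{Z\overline{W}X\overline{Y}}$. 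Thus all four hypotheses of Theorem \ref{theorem1.1} are in force and the manifold is BTP. Invoking the recalled equivalence ``BTP $\Longleftrightarrow$ pluriclosed under the BKL assumption'' then delivers pluriclosedness, completing this direction.

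For the reverse direction BTP $+$ pluriclosed $\Longrightarrow$ BKL, I simply cite the implication recalled in the paragraph preceding the corollary, which is the content of the elementary observation from \cite{ZhaoZ19Str}; no additional argument is required. The main obstacle, in the narrow sense, is essentially absent: the only micro-step worth spelling out is the derivation of \eqref{eq:1.2} from the two BKL symmetries, sketched above. All the heavy lifting is done by Theorem \ref{theorem1.1}.
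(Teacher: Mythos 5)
Your proposal is correct and follows essentially the same route as the paper: the forward direction is obtained by checking that BKL (i.e.\ \eqref{eq:1.1} plus $Q=0$) implies all four conditions of Theorem \ref{theorem1.1} and then invoking the recalled equivalence ``BTP $\Longleftrightarrow$ pluriclosed under BKL'', while the reverse direction is the cited implication ``BTP $+$ pluriclosed $\Longrightarrow$ BKL''. The only difference is that you spell out the derivation of \eqref{eq:1.2} from $Q=0$ via the Hermitian symmetry $\overline{R^b_{X\overline{Y}Z\overline{W}}}=R^b_{Y\overline{X}W\overline{Z}}$, a micro-step the paper leaves implicit (and which is done correctly).
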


This is the main result of \cite{ZhaoZ19Str} even without the compactness assumption.
We remark that in Theorem \ref{theorem1.1}, the conditions \eqref{eq:1.3} and \eqref{eq:1.4}
are automatically satisfied if $\mathrm{Ric}(Q)=0$, which is analyzed in Proposition \ref{RicQ} below.
However, since there are examples of non-K\"ahler BKL manifolds with $\nabla^b R^b\neq 0$,
we know that for BTP manifolds, the Bismut connection $\nabla^b$ is not always an Ambrose-Singer
connection (cf. \cite{AS,NZ1,NZ2,BP}).


Next, let us denote by $\nabla^c$ the Chern connection of $(M^n,g)$, and by $T=T^c$, $R^c$ its torsion and curvature. Under any local unitary tangent frame $\{ e_1, \ldots , e_n\}$, we have $T(e_i,\overline{e}_k)=0$ and $T(e_i,e_k)=\sum_j T^j_{ik} e_j$. These $T^j_{ik}$ will be called the {\em Chern torsion components} under the frame $e$ from now on. From $T$ we obtain the following $2$-tensors:
$$ B_{i\bar{j}}=\sum_{r,s} T^j_{rs} \overline{ T^i_{rs} } , \ \ \ \ A_{i\bar{j}} =\sum_{r,s} T^r_{is} \overline{T^r_{js}}, \ \ \ \ \phi^j_i = \sum_r T^j_{ir} \overline{\eta}_r. $$
Note that $A$, $B$, $\phi$ are well-defined global $2$-tensors on the manifold.  We have the following:

\begin{proposition} \label{prop1.5}
Let $(M^n,g)$ be a BTP manifold. Then under any local unitary frame $e$, the following hold:
\begin{eqnarray}
&&  \sum_r \big( T^r_{ij}T^{\ell}_{r k} + T^r_{jk}T^{\ell}_{ri} + T^r_{ki}T^{\ell}_{rj}\big) =0,  \ \ \ \ \ \forall \ 1\leq i,j,k,\ell \leq n, \label{eq:1.5} \\
&& 2 \sum_{r,s} T^r_{si} A_{r\bar{s}} =  \sum_{r,s} T^r_{si} B_{r\bar{s}} , \ \ \ \ \ \forall \ 1\leq i\leq n, \label{eq:1.6} \\
&& Q_{i\bar{j}k\bar{\ell}}  = \sum_r \big( T^j_{kr} \overline{ T^i_{\ell r} } + T^{\ell}_{ir} \overline{ T^k_{j r} } - T^j_{ir} \overline{ T^k_{\ell r} } - T^{\ell}_{kr} \overline{ T^i_{j r} } - T^r_{ik} \overline{ T^r_{j\ell } } \big) ,\ \ \ \ \ \forall \ 1\leq i,j,k,\ell \leq n, \label{eq:1.7} \\
&& \partial \eta =0, \ \ \ \overline{\partial } \eta = - \sum_{i,j} \overline{\phi^i_j }\,\varphi_i \wedge \overline{\varphi}_j, \label{eq:1.8}\\
&& R^b_{i\bar{j}k\bar{\ell}} - R^c_{i\bar{j}k\bar{\ell}} = \sum_r \big( T^{\ell}_{ir} \overline{T^{k}_{jr}} -T^{r}_{ik} \overline{T^{r}_{j\ell }} - T^{j}_{ir} \overline{T^{k}_{\ell r}} - T^{\ell}_{kr} \overline{T^{i}_{jr}}\big) , \ \ \ \ \forall \ 1\leq i,j,k,\ell \leq n, \label{eq:1.9}\\
&& \sum_r \big( \phi^r_i T^j_{rk} + \phi^r_k T^j_{ir} - \phi^j_r T^r_{ik} \big) = 0, \ \  \ \ \forall \ 1\leq i,j,k\leq n, \label{eq:1.10}\\
&& A, \,B, \,\phi, \,\phi^{\ast} \, \mbox{commute with each other}. \label{eq:1.11}
\end{eqnarray}
Here $\phi^{\ast}$ stands for $ (\phi^{\ast})^j_i = \overline{\phi^i_j}$, $\varphi$ is the coframe dual to $e$, while $R^b$ and $R^c$ denote the curvature tensor of the Bismut and Chern connection, respectively.
\end{proposition}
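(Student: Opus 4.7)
The plan is to derive all seven identities from the BTP hypothesis $\nabla^b T^b = 0$ combined with the four curvature conditions \eqref{eq:1.1}--\eqref{eq:1.4} of Theorem \ref{theorem1.1}, using two workhorses: the first Bianchi identity for $\nabla^b$, which under BTP collapses to
$$\mathfrak{S}_{X,Y,Z}\, R^b(X,Y)Z \;=\; \mathfrak{S}_{X,Y,Z}\, T^b\bigl(T^b(X,Y),Z\bigr),$$
and the standard difference formula between $\nabla^b$ and the Chern connection $\nabla^c$, expressed in a unitary frame through the Chern torsion components $T^k_{ij}$. A general observation used throughout is that any tensor built algebraically from $T^b$ --- in particular $A$, $B$, $\phi$, and $\phi^{\ast}$ --- is itself $\nabla^b$-parallel.

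First I would establish \eqref{eq:1.5}, \eqref{eq:1.9}, and \eqref{eq:1.7} as a block. Evaluating the simplified Bianchi identity on three $(1,0)$-vectors and pairing with a $(0,1)$-covector, the left-hand side vanishes by \eqref{eq:1.1}, and the right-hand side produces exactly the cyclic sum of $T^r_{\cdot\,\cdot} T^{\ell}_{r\,\cdot}$ appearing in \eqref{eq:1.5}. The formula \eqref{eq:1.9} is a specialization of the well-known expansion of $R^b-R^c$ in a unitary frame: its generic form involves covariant derivatives of the Chern torsion plus a quadratic torsion expression, but under BTP the derivative terms drop out, leaving precisely the stated identity. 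Substituting \eqref{eq:1.9} into the definition of $Q$ and enforcing the Bianchi symmetry \eqref{eq:1.2} then isolates \eqref{eq:1.7}.

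Next I would treat \eqref{eq:1.8}, \eqref{eq:1.10}, and \eqref{eq:1.6}, which are the identities involving the torsion one-form $\eta$ and its derivatives. Because $T^b=-d^c\omega$ and $\nabla^b T^b=0$, tracing the parallelness relation on appropriate indices yields constraints on the covariant derivatives of $\eta$: the $(2,0)$-trace gives $\partial\eta=0$, while the mixed trace yields the formula for $\bar\partial\eta$ in terms of $\phi$, completing \eqref{eq:1.8}. Identity \eqref{eq:1.10} then falls out by contracting the Jacobi-type relation \eqref{eq:1.5} against $\bar\eta$ in the appropriate slot, and is equivalently the component form of $\nabla^b\phi=0$ using skew-symmetry of $T^b$. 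Finally, \eqref{eq:1.6} is extracted from \eqref{eq:1.4}: expanding $\mathrm{Ric}(Q)_{\chi\bar Y}=0$ through \eqref{eq:1.7} and then tracing on the $Y$-index produces exactly the asserted relation between $\sum T^r_{si}A_{r\bar s}$ and $\sum T^r_{si} B_{r\bar s}$.

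The main obstacle is the simultaneous commutativity in \eqref{eq:1.11}. Parallelness delivers $\nabla^b$-parallel commutators, but does not in itself force them to vanish. My plan is to expand each of $[\phi,A]$, $[\phi,B]$, $[A,B]$, $[A,\phi^{\ast}]$, and $[B,\phi^{\ast}]$ in components and reduce them, via repeated applications of \eqref{eq:1.5}, to expressions that are either manifestly antisymmetric in a single pair of summed indices (and so vanish), or that coincide with one of the Ricci-type identities already proved. Concretely, $[\phi,A]$ and $[\phi,B]$ should collapse on a single use of the cyclic identity \eqref{eq:1.5} contracted against $\bar\eta$, while $[A,B]=0$ looks to require combining \eqref{eq:1.2}, \eqref{eq:1.5}, and the trace identity \eqref{eq:1.6}; the remaining commutators then follow by taking Hermitian conjugates. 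All the conceptual ingredients are in place, and the genuine challenge is purely the index bookkeeping needed to present each commutator as a sum of pieces that vanish by one of the preceding identities.
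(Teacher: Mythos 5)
Your treatment of \eqref{eq:1.5}, \eqref{eq:1.7}, \eqref{eq:1.9} via the Bianchi identity and the $R^b-R^c$ difference formula matches the paper's route (its Lemma \ref{lemma3.1} and Propositions \ref{Tderivative}, \ref{prop3.6}), and your derivation of \eqref{eq:1.10} by setting $j=s$ in \eqref{eq:1.5} and contracting with $\overline{\eta}_s$ is correct --- indeed cleaner than the paper, which instead expands $\partial^2\overline{\eta}=0$ using $\nabla^b\phi=0$. However, there are two genuine gaps. First, \eqref{eq:1.6} does not come from \eqref{eq:1.4} the way you describe: expanding $\mathrm{Ric}(Q)_{\chi\overline{Y}}=0$ through $\mathrm{Ric}(Q)=B-\phi-\phi^{\ast}$ only yields $\sum_i\overline{\eta}_iB_{i\bar{j}}=0$ (the $\phi$ and $\phi^{\ast}$ terms die by $\sum_r\eta_rT^r_{ik}=0$), which is a statement about $B\chi$ with no free torsion index and cannot be ``traced on $Y$'' into \eqref{eq:1.6}. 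The correct one-line derivation is to contract \eqref{eq:1.5} against $\overline{T^{\ell}_{jk}}$ and sum over $j,k,\ell$; the three cyclic terms then produce $-A$, $+B$, $-A$ contracted with $T^r_{si}$.

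The more serious gap is \eqref{eq:1.11}, which you describe as ``purely index bookkeeping''; it is not. The commutators $[\phi,A]$, $[\phi,B]$, $[\phi,\phi^{\ast}]$ cannot collapse from \eqref{eq:1.5} contracted against $\overline{\eta}$ alone: that contraction is holomorphic in $T$ and only yields \eqref{eq:1.10}, whereas these commutators require the mixed identity $\sum_r\big(-\phi^j_r\overline{T^i_{\ell r}}+\phi^{\ell}_r\overline{T^i_{jr}}-\phi^r_i\overline{T^r_{j\ell}}\big)=0$, which the paper extracts by contracting \eqref{eq:1.7} with $\overline{\eta}_k$ and using $Q_{i\bar{j}\chi\bar{\ell}}=0$ (a consequence of $\nabla^b\chi=0$ and \eqref{eq:1.2}), i.e.\ it genuinely needs the curvature conditions, not just the Jacobi identity. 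Worse, $[A,B]=0$ is not an algebraic consequence of \eqref{eq:1.2}, \eqref{eq:1.5}, \eqref{eq:1.6} at all: the paper proves it by a holonomy-reduction argument --- diagonalize $A$ into blocks of equal eigenvalues, observe that $\nabla^bA=0$ forces $\theta^b$, hence $\Theta^b$, hence $\mathrm{Ric}(Q)$ to be block-diagonal with respect to those blocks, so $[\mathrm{Ric}(Q),A]=0$, and then $B=\mathrm{Ric}(Q)+\phi+\phi^{\ast}$ commutes with $A$ because $\phi$ and $\phi^{\ast}$ already do. That parallel-tensor/block-decomposition idea is the missing ingredient your plan would need to supply.
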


Since $\eta_i =\sum_k T^k_{ki}$ for any $i$, the equalities (\ref{eq:1.5})\,--\,(\ref{eq:1.7}) imply
\begin{equation} \label{eq:1.12}
\sum_r \eta_r T^r_{ik} = 0, \quad \forall\  1 \leq i,k \leq n;
\qquad 2\phi A = \phi B;
\qquad  \nabla^b Q = 0;
\qquad \mathrm{Ric}(Q)= B - \phi - \phi^{\ast} .
\end{equation}
Here $\phi A =\sum_{i,j} \phi^i_j A_{i\bar{j}} $,  $\phi B =\sum_{i,j} \phi^i_j B_{i\bar{j}} $.
The first equality in (\ref{eq:1.12}) is obtained from (\ref{eq:1.5}) after we let $j=\ell$ and sum up from $1$ to $n$, while the last equality in (\ref{eq:1.12})
is derived from (\ref{eq:1.7}) when we let $k=\ell$ and sum up. From the equality (\ref{eq:1.8}) and the defining equation of $\eta$,
one gets
\begin{equation} \label{eq:1.13}
\partial \overline{\partial} (\omega^{n-1}) =  \big( \eta \wedge \overline{\eta} + \overline{\partial} \eta \big) \wedge \omega^{n-1} = 0,
\end{equation}
that is, {\em any BTP metric is always a Gauduchon metric.} Similarly, as a consequence to (\ref{eq:1.11}), we get the existence of a special
local frames on any {\em non-balanced} BTP manifolds, analogous to the BKL case.

\begin{definition}
Let $(M^n,g)$ be a non-balanced BTP manifold.
Then a local unitary frame $e$ is called an {\bf admissible frame, }
if there exist global constants $\lambda >0$, $a_1, \ldots , a_{n-1},a_n$
with $a_1+\cdots +a_{n-1}=\lambda$ and $a_n=0$ such that
$$ \eta = \lambda \varphi_n, \ \ \ T^n_{ij}=0, \ \ \ T^j_{in}=a_i\delta_{ij}, \ \ \ \forall \ 1\leq i,j\leq n. $$
\end{definition}
Here $\varphi$ is the coframe dual to $e$. Note that $\lambda =|\eta| $ is a global constant
and $\lambda >0$ as $g$ is not balanced. Also, since $\lambda a_i$ are eigenvalues of the $\nabla^b$-parallel tensor $\phi$,
we know that $\{ a_1, \ldots , a_{n-1}, a_n\}$ is automatically a set of global constants.
Under an admissible frame $e$, the equation (\ref{eq:1.10}) now takes the form
\begin{equation} \label{eq:1.14}
(a_i+a_k-a_j)T^j_{ik}=0 , \ \ \ \ \forall \ 1\leq i,j,k\leq n.
\end{equation}

\begin{proposition} \label{prop1.7}
Let $(M^n,g)$ be a non-balanced BTP manifold and $\chi$ be the dual vector field of the Gauduchon's torsion $1$-form $\eta$.
Then locally there always exist admissible frames and $\chi$ is a holomorphic vector field with constant norm.
The metric $g$ is locally conformally balanced if and only if all $a_i$ are real number.
It can not be globally conformally balanced if $M$ is compact.
The restricted holonomy group of the Bismut connection is contained in $U(n-1) \times 1$.
\end{proposition}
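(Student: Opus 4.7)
The plan is to construct a local admissible frame by encoding the data of $\eta$ together with the $\nabla^b$-parallel tensor $\phi$, and then to read off the holomorphicity of $\chi$, the conformal-balance characterization, and the holonomy reduction as consequences.

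First, since $\eta_i=\sum_k T^k_{ki}$ is a contraction of $T$, the BTP hypothesis $\nabla^b T=0$ gives $\nabla^b\eta=0$ and hence $\nabla^b\chi=0$ by metric duality. Consequently $\lambda:=|\eta|$ is globally constant (and positive by the non-balanced hypothesis), and $|\chi|^2=|\eta|^2=\lambda^2$ settles the norm assertion. The tensor $\phi$ is likewise $\nabla^b$-parallel, so its (complex) eigenvalues are global constants. For a local admissible frame, set $e_n:=\chi/\lambda$ so that $\eta=\lambda\varphi_n$; the first identity of \eqref{eq:1.12} then forces $T^n_{ik}=0$ for all $i,k$, and in particular $\phi(e_n)=0$. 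Since \eqref{eq:1.11} yields $[\phi,\phi^{\ast}]=0$, the restriction $\phi|_{\chi^\perp}$ is normal and hence unitarily diagonalizable; I choose $e_1,\dots,e_{n-1}$ to diagonalize it. Writing the eigenvalues as $\lambda a_i$ and using $\phi^j_i=\lambda\, T^j_{in}$ (which comes from $\eta_r=\lambda\delta_{rn}$), we recover $T^j_{in}=a_i\delta_{ij}$; the identities $a_n=0$ and $\sum_{i=1}^{n-1}a_i=\lambda$ follow by specializing $i=j=n$ and $i=n$, respectively.

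For the holomorphicity of $\chi$, I would use the criterion $\nabla^c_{\overline V}\chi=0$ for every $(0,1)$-vector $\overline V$. From the vanishing of the Chern torsion on mixed types combined with the totally skew $(2,1)+(1,2)$ Bismut torsion, one obtains the standard Bismut--Chern difference formula
\[
(\nabla^b-\nabla^c)_{\overline{e}_j}e_i \;=\; -\sum_k \overline{T^i_{jk}}\, e_k.
\]
Since $\nabla^b\chi=0$ and $\lambda$ is constant, $\nabla^b e_n=0$ on the admissible chart, so $\nabla^c_{\overline{e}_j}\chi=\lambda\sum_k\overline{T^n_{jk}}\,e_k=0$ using $T^n_{jk}=0$; hence $\chi$ is holomorphic.

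For the conformal-balance statements, $g$ is locally conformally balanced iff the real Lee form $\theta:=\eta+\overline\eta$ is $d$-closed. Using $\partial\eta=0$ from \eqref{eq:1.8} and the expression for $\overline\partial\eta$ there with $\phi^i_j=\lambda a_i\delta_{ij}$ in the admissible frame, one computes $d\theta=\sum_i\lambda(a_i-\overline{a_i})\,\varphi_i\wedge\overline{\varphi}_i$, which vanishes precisely when every $a_i$ is real. For the global obstruction on compact $M$: if $e^{2u}g$ were globally balanced for some real $u$, the transformation law $\tilde\eta=\eta-2(n-1)\partial u$ would force $\eta=2(n-1)\partial u$, but at a maximum point of $u$ one has $\partial u=0$, contradicting $|\eta|=\lambda>0$. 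Finally, since $\chi$ is a nowhere-vanishing $\nabla^b$-parallel $(1,0)$-vector field, parallel transport preserves both $\chi$ and its orthogonal complement, forcing the restricted Bismut holonomy into $U(\chi^\perp)\times\{1\}\cong U(n-1)\times 1$. The one genuinely technical ingredient will be the Bismut--Chern difference formula used in the holomorphicity step; everything else is bookkeeping on top of Proposition \ref{prop1.5}.
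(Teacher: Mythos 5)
Your proposal is correct, and for four of the five assertions it follows the paper's own route essentially verbatim: constancy of $|\eta|$ from $\nabla^b\eta=0$, the frame $e_n=\chi/\lambda$ with $T^n_{ik}=0$ forced by the first identity of \eqref{eq:1.12}, diagonalization of the normal tensor $\phi|_{\chi^\perp}$ via \eqref{eq:1.11}, holomorphicity of $\chi$ from $\nabla^c_{\overline{X}}\chi=(\nabla^b_{\overline{X}}-\gamma_{\overline{X}})\chi$ together with $T^n_{\ast\ast}=0$ (your difference formula has the lower indices of $T$ transposed relative to \eqref{eq:2.6}, but the sign is irrelevant here since the relevant components vanish), the computation $d(\eta+\overline{\eta})=\lambda\sum_i(a_i-\overline{a}_i)\varphi_i\wedge\overline{\varphi}_i$, and the holonomy reduction from the parallel splitting $\langle e_n\rangle\oplus e_n^{\perp}$. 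The one place where you genuinely diverge is the claim that a compact non-balanced BTP manifold cannot be globally conformally balanced. The paper integrates: writing $\eta+\overline{\eta}=df$, it computes $\int_M\eta\wedge\overline{\eta}\wedge\omega^{n-1}=\int_M f\,\partial\overline{\partial}(\omega^{n-1})=0$ using the fact (from \eqref{eq:1.13}) that every BTP metric is Gauduchon, and concludes $\eta=0$. You instead use the conformal transformation law $\tilde{\eta}=\eta-2(n-1)\partial u$ and evaluate at a maximum of $u$, where $\partial u=0$ forces $\eta$ to vanish at a point, contradicting $|\eta|\equiv\lambda>0$. Both arguments are valid; yours is more elementary in that it avoids the Gauduchon property and Stokes, needing only that $|\eta|$ is a nonzero constant, while the paper's version is the one that generalizes to the strongly Gauduchon obstruction in Corollary \ref{cor1.11}.
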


Here {\em locally conformally balanced} means that $d(\eta +\overline{\eta})=0$.
In this case, any point on the manifold is contained in a neighborhood in which the metric $g$ is conformal to a balanced metric.
Also recall that a Hermitian manifold $(M^n,g)$ is said to be {\em locally conformally K\"ahler}
if its K\"ahler form $\omega$ satisfies $d\omega = \psi \wedge \omega$ for some closed global $1$-form on $M^n$,
called the {\em Lee form} of $g$. Clearly, $\psi$ is necessarily equal to $-\frac{1}{n-1}(\eta + \overline{\eta})$.
The metric $g$ is said to be {\em Vaisman} if $g$ is {\em locally conformally K\"ahler} and $\psi$ is parallel under the Levi-Civita connection.
Then we have the following results of Vaisman manifolds in terms of the torsion of Bismut connection,
where the first statement has recently been proved by Andrada and Villacampa \cite[Theorem 3.7]{AndV},
which indicates that all Vaisman manifolds are BTP. We will include a proof here for readers' convenience.

\begin{proposition}\label{prop1.8}
It holds that
\begin{enumerate}
\item $($\cite{AndV}$)$ for a locally conformally K\"ahler manifold $(M^n,g)$, it is BTP if and only if it is Vaisman.
\item for a non-balanced BTP manifold $(M^n,g)$, $g$ is Vaisman if and only if $a_1=\cdots =a_{n-1}=\frac{\lambda}{n-1}$.
\end{enumerate}
\end{proposition}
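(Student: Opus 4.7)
The strategy is to prove the algebraic part~(2) first in both directions, then deduce part~(1) from it together with a short direct computation for the Andrada--Villacampa implication Vaisman $\Longrightarrow$ BTP.

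For part~(2), fix an admissible frame from Proposition~\ref{prop1.7}, so that $\eta=\lambda\varphi_n$, $\phi^j_i=\lambda a_i\delta_{ij}$, and $T^j_{in}=a_i\delta_{ij}$. The direction $(\Rightarrow)$ is immediate: Vaisman implies LCK, hence $\partial\omega=-\frac{1}{n-1}\eta\wedge\omega$, and expanding this in Chern torsion components under the admissible frame yields $T^j_{in}=\frac{\lambda}{n-1}\delta_{ij}$ for $i<n$, giving $a_i=\frac{\lambda}{n-1}$. For the converse, assume $a_i=\frac{\lambda}{n-1}$ for every $i<n$. Then (\ref{eq:1.14}) forces $T^j_{ik}=0$ whenever $a_i+a_k\neq a_j$, leaving only the components already pinned down by admissibility; substituting these back recovers $\partial\omega=-\frac{1}{n-1}\eta\wedge\omega$, so $d\omega=\psi\wedge\omega$. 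The identity $d\psi=0$ follows from (\ref{eq:1.8}) together with the fact that $\phi$, being real and diagonal in the admissible frame, is Hermitian. Thus $g$ is LCK, and upgrading to Vaisman (since $d\psi=0$ is already known) amounts to showing the real vector field $\xi:=\chi+\overline{\chi}$ is Killing.

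To establish this I use the Bismut--Levi-Civita comparison $g(\nabla^b_X Y-\nabla^{LC}_X Y,Z)=\frac12 T^b(X,Y,Z)$, where $T^b(X,Y,Z)$ denotes the totally antisymmetric Bismut torsion $3$-form viewed via the metric: this antisymmetry makes the torsion contributions cancel in the symmetric combination, giving
\begin{equation*}
(\mathcal{L}_\xi g)(X,Y)\;=\;g(\nabla^b_X\xi,Y)+g(\nabla^b_Y\xi,X).
\end{equation*}
Since $a_n=0$ is a simple eigenvalue of the $\nabla^b$-parallel tensor $\phi$, the eigenline $\mathrm{span}(e_n)$ is $\nabla^b$-parallel, so $\nabla^b e_n=\alpha\otimes e_n$ for a purely imaginary $1$-form $\alpha$; hence $\nabla^b\xi=\beta\otimes J\xi$ with $\beta:=-i\alpha$ real. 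Plugging this structure into the BTP equation $(\nabla^b_{\overline{e_\ell}}T^b)(e_i,e_n)=0$ and using the explicit Chern torsion just derived, every term cancels except one proportional to $\frac{\lambda}{n-1}\,\alpha(\overline{e_\ell})\,\delta_{ij}$, which forces $\alpha(\overline{e_\ell})=0$ for every $\ell$; conjugation then yields $\alpha=0$, so $\nabla^b\xi=0$. In particular $\xi$ is Killing and $g$ is Vaisman.

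Part~(1) now follows quickly. For LCK $+$ BTP $\Longrightarrow$ Vaisman: if $g$ is balanced then $\eta=0$, whence $\psi=0$ and $d\omega=0$, i.e.\ $g$ is K\"ahler (hence trivially Vaisman); otherwise the admissible-frame comparison with the LCK torsion formula gives $a_i=\frac{\lambda}{n-1}$ and part~(2) applies. For the Andrada--Villacampa direction Vaisman $\Longrightarrow$ BTP, I use $T^b(X,Y,Z)=d\omega(JX,JY,JZ)$ together with $d\omega=\psi\wedge\omega$ and the $J$-invariance of $\omega$ to write the Bismut torsion $3$-form as $T^b=\widetilde\psi\wedge\omega$ with $\widetilde\psi:=\psi\circ J$. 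Since $\nabla^b\omega=0$ and $\nabla^bJ=0$, this gives $\nabla^b T^b=((\nabla^b\psi)\circ J)\wedge\omega$; the Bismut--Levi-Civita comparison together with the three-term cancellation $T^b(X,Y,\xi)=\widetilde\psi(X)\omega(Y,\xi)-\widetilde\psi(Y)\omega(X,\xi)+\widetilde\psi(\xi)\omega(X,Y)=0$ (using $\omega(Y,\xi)=\widetilde\psi(Y)$ and $\widetilde\psi(\xi)=\psi(J\xi)=0$) shows $\nabla^b\psi=\nabla^{LC}\psi=0$, hence $\nabla^b T^b=0$. The main obstacle is the step $\alpha=0$ in part~(2): it genuinely uses the full BTP equation $\nabla^b T^b=0$ rather than only its consequence $\nabla^b\phi=0$ and the Bismut holonomy reduction of Proposition~\ref{prop1.7}, and it is this sharpening from \emph{eigenline $\nabla^b$-parallel} to \emph{eigenvector $\nabla^b$-parallel} that locks in the precise match between non-balanced BTP with uniform eigenvalues and the Vaisman condition.
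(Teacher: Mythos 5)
Your argument is correct, but it is organized differently from the paper's and uses different tools at two points. The paper proves part (1) first by a single frame computation: it derives the LCK torsion formula $T^j_{ik}=\frac{1}{n-1}(\eta_k\delta_{ij}-\eta_i\delta_{kj})$, computes $\nabla(\eta+\overline{\eta})$ explicitly in a frame with $\theta^b|_p=0$, and observes that all contributions other than the $\nabla^b$-derivatives of $\eta$ are antisymmetric and cancel against their conjugates; this yields both directions of (1) simultaneously, and the converse of (2) is then obtained by exhibiting $d\omega=-\frac{1}{n-1}(\eta+\overline{\eta})\wedge\omega$ from $d\overline{\varphi}_n=\frac{\lambda}{n-1}\sum_{i<n}\varphi_i\wedge\overline{\varphi}_i$ and quoting part (1). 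You instead prove the converse of (2) directly --- LCK from the torsion identification, then Vaisman by showing the Lee field is $\nabla^b$-parallel and using the symmetrization identity $(\mathcal{L}_\xi g)(X,Y)=g(\nabla^b_X\xi,Y)+g(\nabla^b_Y\xi,X)$ --- and you prove Vaisman $\Rightarrow$ BTP via the coordinate-free identity $T^b=(\psi\circ J)\wedge\omega$ together with $\nabla^b\psi=\nabla^{LC}\psi$, which holds because $T^b(\cdot,\cdot,\xi)=0$ for the dual $\xi$ of $\psi$. Both routes are sound; yours is more conceptual in the Vaisman $\Rightarrow$ BTP direction, while the paper's one computation of $\nabla(\eta+\overline{\eta})$ is more economical and delivers the two directions of (1) at once. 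One remark: what you call the ``main obstacle,'' namely $\alpha=0$, is not an obstacle at all. On any BTP manifold $\nabla^bT=0$ gives $\nabla^b\eta=0$ by taking a trace, hence $\nabla^b\chi=0$, which is exactly $\theta^b_{n\ast}=\theta^b_{\ast n}=0$ and is already recorded in the proof of Proposition \ref{prop1.7}; the detour through the eigenline of $\phi$ and the equation $(\nabla^b_{\overline{e}_\ell}T^b)(e_i,e_n)=0$ is unnecessary, and its sketched term-by-term cancellation is the only place where your write-up is not fully justified --- fortunately it can simply be replaced by the one-line trace argument.
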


In particular, the standard Hopf manifold in all dimensions are BTP.
Ornea and Verbitsky \cite{OV03} proved a beautiful structure theorem for compact Vaisman manifolds,
which states that such a manifold must be a fiber bundle over a circle with fiber being a compact Sasakian manifold.
Since Vaisman manifolds are BTP, the last statement in Proposition \ref{prop1.7} extends another result \cite[Corollary 3.8]{AndV}
by Andrada and Villacampa from the Vaisman case to the general non-balanced BTP case.

After this, we will prove a more technical (local) result
which shows that BTP metrics are unique (if exist) within each conformal class, unless the metric is already locally conformally K\"ahler.
Note that when the manifold is compact, this is clearly true since BTP metrics are Gauduchon.

\begin{proposition} \label{prop1.9}
If two conformal Hermitian manifolds $(M^n,g)$ and $(M^n,e^{2u}g)$ are both BTP, with $du\neq 0$ in an open dense subset of $M$,
then  both manifolds are locally conformally K\"ahler $($hence Vaisman$)$.
\end{proposition}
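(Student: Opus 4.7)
The plan is to leverage the rigidity afforded by Proposition~\ref{prop1.7} applied to both $g$ and $\tilde{g}$ simultaneously, in order to establish the Vaisman condition on $g$ (and hence LCK for both metrics).

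First, from $\tilde{\omega}^{n-1}=e^{2(n-1)u}\omega^{n-1}$ and the defining equation $\partial\omega^{n-1}=-\eta\wedge\omega^{n-1}$, one obtains the conformal transformation rule $\tilde{\eta}=\eta-2(n-1)\partial u$; metric-dualizing gives $\tilde{\chi}=e^{-2u}\bigl(\chi-2(n-1)V\bigr)$, where $V=(\bar{\partial}u)^\sharp$ is the $(1,0)$-vector field $g$-dual to $\bar{\partial}u$. Both metrics are Gauduchon by \eqref{eq:1.13}. If $g$ is balanced, then $\tilde{\eta}=-2(n-1)\partial u\ne 0$, so $\tilde{g}$ is non-balanced; an argument analogous to the main case (with the roles of $\eta$ and $\tilde{\eta}$ partially swapped) reduces this to showing $g$ is K\"ahler, in which case both metrics are trivially LCK. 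So the substantive case is that both $g$ and $\tilde{g}$ are non-balanced.

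In the non-balanced case, Proposition~\ref{prop1.7} gives $\lambda:=|\eta|_g$ and $\tilde{\lambda}:=|\tilde{\eta}|_{\tilde{g}}$ as positive global constants, with $\chi,\tilde{\chi}$ holomorphic and local admissible frames for each metric. Fix a $g$-admissible frame $e$ with $\eta=\lambda\varphi_n$, $\chi=\lambda e_n$, $T^j_{in}=a_i\delta_{ij}$, $T^n_{ij}=0$. Setting $u_i=e_i(u)$ so that $V=\sum_i\bar{u}_i e_i$, the combined holomorphy of $\chi$ and $\tilde{\chi}$ translates into $\bar{\partial}(e^{-2u}V)=0$, equivalently $\bar{\partial}V=2\bar{\partial}u\otimes V$. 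I would then derive the conformal transformation law for the Chern torsion, expressing $\tilde{T}^j_{ik}$ in a $\tilde{g}$-unitary frame in terms of $T^j_{ik}$ and the partial derivatives $u_i$.

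The heart of the argument---and the principal obstacle---is to show $u_i=0$ for every $i<n$, i.e., $\partial u\in\mathbb{C}\cdot\eta$ on the dense subset where $du\ne 0$. The strategy is to substitute the conformal transformation rule for $\tilde{T}$ into the BTP identities \eqref{eq:1.5} and \eqref{eq:1.10} applied to $\tilde{g}$; these decompose into the original $T$-contributions plus cross terms proportional to $u_i$ ($i<n$), and a careful matching shows the cross terms cannot be absorbed unless every such $u_i$ vanishes. Granting this, $V=(\bar{u}_n/\lambda)\chi$ is a scalar multiple of $\chi$, so $\tilde{\chi}=f\chi$ with $f=e^{-2u}\bigl(1-2(n-1)\bar{u}_n/\lambda\bigr)$; since both $\chi,\tilde{\chi}$ are holomorphic, so is $f$, and combining this with $|\tilde{\eta}|^2_{\tilde{g}}=\tilde{\lambda}^2$ yields that $u_n$ is real. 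Consequently $du$ is a real multiple of $\eta+\bar{\eta}$ on the dense subset, so $d(\eta+\bar{\eta})=0$ everywhere by continuity; by Proposition~\ref{prop1.7} all $a_i$ are real. Feeding this back into \eqref{eq:1.14} together with the rigidity on the $\tilde{a}_i$'s forces $a_1=\cdots=a_{n-1}=\lambda/(n-1)$, whence Proposition~\ref{prop1.8}(2) concludes that $g$ is Vaisman, and in particular locally conformally K\"ahler. The same conclusion for $\tilde{g}$ follows since it is conformal to an LCK metric with closed Lee form.
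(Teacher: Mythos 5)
There is a genuine gap, and it sits exactly where you locate ``the heart of the argument'': the claim that $u_i=0$ for all $i<n$, i.e.\ that $\partial u\in\mathbb{C}\cdot\eta$ on the dense set where $du\neq0$, is not only unproved but false in general. Consider two distinct Vaisman metrics $g=e^{2u_1}g_0$ and $\tilde g=e^{2u_2}g_0$ in the conformal class of a flat K\"ahler metric $g_0$ on a domain in ${\mathbb C}^2$, e.g.\ with $u_j=-\tfrac12\log(|z_1-a_j|^2+|z_2-b_j|^2)$ for two different centers, as in the example following Lemma \ref{LCK_VSM}. Both are BTP, the conformal factor is $u=u_2-u_1$, and $\eta_g$ is proportional to $\partial u_1$, while $\partial u=\partial u_2-\partial u_1$ is proportional to $\partial u_1$ only on a proper analytic subset. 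So no amount of ``careful matching'' in \eqref{eq:1.5} and \eqref{eq:1.10} can force the cross terms to kill $u_i$ for $i<n$: those identities are purely algebraic in $T$, and when $T$ already has the locally conformally K\"ahler shape \eqref{eq:4.11} they are satisfied by $\tilde T^j_{ik}=e^{-u}\bigl(T^j_{ik}+2(u_i\delta_{kj}-u_k\delta_{ij})\bigr)$ for \emph{arbitrary} $u$. Since the LCK case is precisely the conclusion you are trying to reach, any deduction of $u_i=0$ from these identities would have to fail there, so the strategy cannot close. The subsequent steps (reality of $u_n$, reality of the $a_i$, and the jump to $a_1=\cdots=a_{n-1}=\lambda/(n-1)$ from \eqref{eq:1.14} ``together with the rigidity on the $\tilde a_i$'s'') all rest on this false intermediate claim and are likewise unsubstantiated; the balanced case is also only waved at.

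The paper's proof avoids all of this by adapting the frame to $\partial u$ rather than to $\eta$: on the dense set it chooses a unitary frame with $\varphi_n=\partial u/|\partial u|_g$, and, crucially, it differentiates the conformal transformation law $e^u\tilde T^j_{ik}=T^j_{ik}+2(u_i\delta_{kj}-u_k\delta_{ij})$ covariantly, using \emph{both} parallelism hypotheses $\nabla^bT=0$ and $\tilde\nabla^b\tilde T=0$. This produces the second-order identity \eqref{eq:4.12}, whose index analysis forces the torsion itself into the LCK form $T^j_{ik}=\frac{1}{n-1}(\eta_k\delta_{ij}-\eta_i\delta_{kj})$, from which $d\omega=-\frac{1}{n-1}(\eta+\overline\eta)\wedge\omega$ and the closedness of the Lee form follow. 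The essential information is in the derivative of the transformation law, not in the algebraic torsion identities you propose to use; without some analogue of \eqref{eq:4.12} your argument does not go through.
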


Of course in the locally conformally K\"ahler case, locally there could be plenty of Vasiman metrics within the conformal class.
See Lemma \ref{LCK_VSM} in \S \ref{BTP_mfd} for more detailed discussion on this point.

When $n=2$, it was shown in \cite{ZhaoZ19Str} that BTP = BKL = Vaisman.
Compact Vaisman surfaces are fully classified by Belgun \cite{B00}.
When $n\geq 3$, on the other hand, it was shown in \cite{YZZ} that
(non-K\"ahler) BKL metrics can never be locally conformally K\"ahler
so BKL and Vaisman form disjoint subsets in BTP when $n\geq 3$.

In dimension $3$ and higher, there are plenty of BTP manifolds that are neither BKL nor Vaisman, and there are also balanced BTP manifolds. For instance, any complex semisimple Lie group $G$ equipped with the canonical metric (coming from the Killing form) is BTP. So the compact quotients of $G$ are balanced Chern flat BTP manifolds, and they are neither BKL nor Vaisman.

As concrete examples, one can classify all BTP manifolds in the category of nilmanifolds endowed with nilpotent complex structures, analogous to the BKL case studied in \cite{ZhaoZ19Nil}:

\begin{proposition}\label{prop1.10}
Let $(M^n,g)$ be a complex nilmanifold, namely, a compact Hermitian manifold whose universal cover is a nilpotent Lie group $G$ equipped with a left-invariant complex structure $J$ and a compatible left-invariant metric $g$. Assume that $J$ is nilpotent in the sense of \cite{CFGU}. Then $g$ is BTP if and only if there exists a unitary left-invariant coframe $\varphi$ on $G$, an integer $1\leq r\leq n$ and some coefficients $Y_{\alpha i}$ such that
\begin{equation*}
\begin{cases}
d\varphi_i=0,   &\forall \ 1\leq i\leq r;\  \\
d\varphi_{\alpha} = \sum\limits_{i=1}^r Y_{\alpha i}\,\varphi_i \wedge \overline{\varphi}_i,  & \forall\ r<\alpha \leq n.
\end{cases}
\end{equation*}
Note that the metric $g$ will be balanced when and only when $\sum_{i=1}^r Y_{\alpha i}=0$ for each $\alpha$,
$g$ will be BKL when and only when $\sum_{\alpha=r+1}^n Y_{\alpha i}\overline{Y_{\alpha k}} + \overline{Y_{\alpha i}} Y_{\alpha k} =0$
for $1 \leq i < k \leq r$,
and $g$ will be Vaisman when and only when $r = n-1$ and $Y_{ni}$ are equal for each $i$, or $r=n$.
\end{proposition}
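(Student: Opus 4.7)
The plan is to exploit the CFGU nilpotent filtration to fix a convenient left-invariant unitary coframe on the universal cover $G$, and then translate each BTP constraint from Proposition~\ref{prop1.5} into an algebraic identity on the structure constants of the Lie algebra $\mathfrak{g}$. Because all structures descend from left-invariant data, every tensorial identity on $M$ reduces to an identity at the identity of $G$, so the proof becomes essentially linear and quadratic algebra on $\mathfrak{g}$.

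For the setup, since $J$ is nilpotent in the sense of \cite{CFGU}, I would start with a left-invariant unitary $(1,0)$-coframe $\varphi_1,\ldots,\varphi_n$ adapted to the ascending series, so that $d\varphi_k\in\Lambda^2\,\mathrm{span}\{\varphi_j,\overline{\varphi}_j:j<k\}$, and in particular $d\varphi_1=0$. The Chern torsion components $T^k_{ij}$ are then polynomials in the structure constants of $\mathfrak{g}$ obtained from the standard left-invariant Koszul formula for the Chern connection, with integrability removing any $(0,2)$-contribution.

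For the \emph{sufficiency} direction, I would directly substitute the proposed structure equations and compute that the only nonzero Chern torsion components are $T^i_{i\alpha}=-\overline{Y_{\alpha i}}$ for $i\leq r<\alpha$ (and their antisymmetric partners). Plugging these into Proposition~\ref{prop1.5}, the quadratic identity \eqref{eq:1.5} reduces to a pairwise cancellation, the two-tensors $A,B,\phi,\phi^\ast$ are all simultaneously diagonal so that \eqref{eq:1.11} is automatic, and \eqref{eq:1.6}, \eqref{eq:1.7}, \eqref{eq:1.10} follow by inspection; hence $g$ is BTP.

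For the \emph{necessity} direction, assume $g$ is BTP and induct on the filtration level. At each level I would use the quadratic identity \eqref{eq:1.5}, the cubic coupling \eqref{eq:1.10}, and the commutativity \eqref{eq:1.11} (together with the admissible-frame identity \eqref{eq:1.14} from Proposition~\ref{prop1.7} in the non-balanced case) to show that the $(2,0)$-part of $d\varphi_k$ vanishes and that its $(1,1)$-part is diagonal in $(\varphi_i,\overline{\varphi}_i)$ and supported only on indices $i$ with $d\varphi_i=0$. Taking $r$ to be the number of such closed indices and re-indexing places the structure in the claimed form. The subcase statements then follow by direct computation: $\eta_\alpha=-\sum_{i\leq r}\overline{Y_{\alpha i}}$ gives the balanced characterization, substitution into \eqref{eq:1.7} and setting $Q=0$ gives the BKL one, and Proposition~\ref{prop1.8}(2) together with the admissible frame (in which $\varphi_n$ points along $\eta$) gives the Vaisman one, with the degenerate $r=n$ corresponding to a complex torus. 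The principal obstacle lies in the diagonalization step, where ruling out off-diagonal $(1,1)$-contributions to $d\varphi_k$ requires carefully combining \eqref{eq:1.10} with the simultaneous diagonalizability of $A,B,\phi,\phi^\ast$, as nilpotency alone permits much richer $d\varphi_k$.
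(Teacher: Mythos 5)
Your sufficiency argument has a genuine logical gap: Proposition \ref{prop1.5} lists \emph{necessary} conditions for BTP, so verifying the identities \eqref{eq:1.5}--\eqref{eq:1.11} for the proposed structure equations cannot establish that $g$ is BTP. (Indeed, several of those identities involve only the torsion and would hold for many non-BTP metrics.) To close this direction you must either compute the Bismut connection form $\theta^b$ explicitly from the structure equations and check $\nabla^b T=0$ directly --- which is what the paper does, and which is genuinely easy here because all torsion and connection coefficients are constants for left-invariant data --- or verify the four curvature conditions \eqref{eq:1.1}--\eqref{eq:1.4} of Theorem \ref{theorem1.1}, which are equivalent to BTP. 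Your identification of the torsion components $T^i_{i\alpha}=-\overline{Y_{\alpha i}}$ is consistent with the paper's conventions (it reproduces the balanced, BKL and Vaisman characterizations in the $n=3$ example), so the raw computation is fine; only the logical frame around it is wrong.

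On the necessity direction, your plan is broadly in the right spirit but leaves the decisive step unproved, as you yourself note: showing that the $(2,0)$-part of each $d\varphi_k$ vanishes, that $G$ is at most $2$-step with $J$ abelian, and that the $(1,1)$-part of $d\varphi_\alpha$ can be simultaneously diagonalized into the form $\sum_i Y_{\alpha i}\,\varphi_i\wedge\overline{\varphi}_i$ with the $\varphi_i$ closed. The paper does not rederive this; it invokes Lemmas 1 and 2 and the subsequent discussion of \cite{ZhaoZ19Nil}, where exactly this normal form is extracted from the parallel-torsion hypothesis for nilpotent $J$. If you want a self-contained argument you would need to supply that reduction in full --- the combination of \eqref{eq:1.5}, \eqref{eq:1.10} and \eqref{eq:1.11} you propose is plausible raw material, but as written it is a statement of intent rather than a proof, and the claim that ``taking $r$ to be the number of closed indices and re-indexing places the structure in the claimed form'' is precisely the content that needs justification.
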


In particular, $G$  (when not abelian) is a 2-step nilpotent group and $J$ is abelian. Let us take a closer look at the $n=3$ case.
In this case, either $r=3$, where $G$ is abelian and $g$ is K\"ahler and flat, or $r=2$ and there exists a unitary coframe $\varphi$ such that
$$ d\varphi_1=d\varphi_2=0, \ \ d\varphi_3 = a\varphi_1\wedge \overline{\varphi}_1 + b \varphi_2\wedge \overline{\varphi}_2, $$
where $a$, $b$ are arbitrary constants. By a unitary constant change of frame if necessary, we may assume that $a>0$.
Under such a frame, the metric $g$ is balanced if and only if $b=-a$, $g$ is BKL if and only if $b\in \sqrt{-1}{\mathbb R}$ is pure imaginary,
while $g$ is Vaisman if and only if $b=a$. For any other choice of $b$ values, $g$ is a non-balanced, non-BKL, non-Vaisman BTP metric.
Hence, the above result suggests that, when $n\geq 3$, there are still many BTP manifolds other than BKL manifolds and Vaisman manifolds,
and there are more non-balanced BTP manifolds than balanced ones. In a recent work \cite{PodestaZ}, Podest\`a and the second named author examined BTP metrics amongst Chern flat manifolds and flag varieties.


Then let us recall that a Hermitian manifold is said to be {\em strongly Gauduchon} in the sense of Popovici \cite{Pop} if there exists a global $(n,n-2)$-form $\Omega$ such that $\partial \omega^{n-1} = \overline{\partial} \Omega$.
The implications hold clearly: $\, \mbox{balanced} \,  \Longrightarrow \,  \mbox{strongly\ Gauduchon}  \, \Longrightarrow  \, \mbox{Gauduchon}$. As a consequence to Theorem \ref{theorem1.1}, we have


\begin{corollary} \label{cor1.11}
Let $(M^n,g)$ be a compact non-balanced BTP manifold. Then $g$ can not be strongly Gauduchon, and the Dolbeault group $H^{0,1}_{\overline{\partial}}(M) \neq 0$.
In particular $M$ does not satisfy the $\partial \overline{\partial}$-lemma and thus it does not admit any K\"ahler metric.
\end{corollary}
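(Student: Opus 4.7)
The plan hinges on two ingredients already established in the paper: by $(\ref{eq:1.8})$, the Gauduchon torsion form satisfies $\partial\eta=0$, and by $(\ref{eq:1.13})$, the BTP metric $g$ is Gauduchon, i.e.\ $\partial\overline{\partial}\omega^{n-1}=0$. Since $g$ is non-balanced, $\eta\not\equiv 0$ on $M$, and this is what drives every contradiction. The first move is to record the key positive integral: because $\sqrt{-1}\,\eta\wedge\overline{\eta}$ is a nonnegative $(1,1)$-form that is strictly positive wherever $\eta\neq 0$, the top form $\sqrt{-1}\,\eta\wedge\overline{\eta}\wedge\omega^{n-1}$ is a positive multiple of $|\eta|^{2}\omega^{n}$, so
$$c:=\int_{M}\eta\wedge\overline{\eta}\wedge\omega^{n-1}$$
is a nonzero purely imaginary number.

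Next I would rule out the strongly Gauduchon property by a direct integration by parts. Assuming $\partial\omega^{n-1}=\overline{\partial}\Omega$ and substituting $\partial\omega^{n-1}=-\eta\wedge\omega^{n-1}$ gives $c=-\int_{M}\overline{\eta}\wedge\overline{\partial}\Omega$. Conjugating $\partial\eta=0$ yields $\overline{\partial}\,\overline{\eta}=0$, so $\overline{\partial}(\overline{\eta}\wedge\Omega)=-\overline{\eta}\wedge\overline{\partial}\Omega$; since $\overline{\eta}\wedge\Omega$ has bidegree $(n,n-1)$, its exterior derivative coincides with its $\overline{\partial}$, and Stokes' theorem on the compact $M$ makes the integral vanish, forcing $c=0$, contradiction. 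The nonvanishing of $H^{0,1}_{\overline{\partial}}(M)$ then comes for free: $\eta\wedge\omega^{n-1}=-\partial\omega^{n-1}$ is $\overline{\partial}$-closed (because $\partial\overline{\partial}\omega^{n-1}=0$), and if it were $\overline{\partial}$-exact, that would mean exactly that $g$ is strongly Gauduchon, which is impossible. Hence it represents a nonzero class in $H^{n,n-1}_{\overline{\partial}}(M)$, which by Serre duality yields $H^{0,1}_{\overline{\partial}}(M)\neq 0$. (A virtually identical computation, run on $\overline{\eta}=\overline{\partial}f$ together with the identity $\overline{\partial}\eta\wedge\omega^{n-1}=-\eta\wedge\overline{\eta}\wedge\omega^{n-1}$ coming from $\partial\overline{\partial}\omega^{n-1}=0$, proves this more directly.)

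For the $\partial\overline{\partial}$-lemma obstruction, I would apply the lemma to the $\partial$-exact, $\overline{\partial}$-closed form $\partial\omega^{n-1}$: it would then have to equal $\partial\overline{\partial}\gamma=\overline{\partial}(-\partial\gamma)$ for some $(n-1,n-2)$-form $\gamma$, exhibiting $g$ as strongly Gauduchon and contradicting the first step. Since every compact Kähler manifold satisfies the $\partial\overline{\partial}$-lemma, $M$ cannot admit any Kähler metric. I do not anticipate a serious technical obstacle; the whole argument is a short integration-by-parts once the Gauduchon identity and $\partial\eta=0$ are in hand. The only thing that must be watched carefully is the bookkeeping: verifying the sign in $\overline{\partial}(\overline{\eta}\wedge\Omega)=-\overline{\eta}\wedge\overline{\partial}\Omega$, checking that the bidegree $(n,n-1)$ makes the $\partial$-piece vanish so Stokes applies, and confirming that $c\neq 0$ is genuinely a consequence of the non-balanced hypothesis $\eta\not\equiv 0$.
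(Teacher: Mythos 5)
Your argument is correct and is essentially the paper's own proof: the same integration by parts $\int_M \eta\wedge\overline{\eta}\wedge\omega^{n-1} = \int_M \overline{\eta}\wedge\overline{\partial}\Omega = 0$ (using $\overline{\partial}\,\overline{\eta}=0$ and Stokes on the $(n,n-1)$-form $\overline{\eta}\wedge\Omega$) rules out strongly Gauduchon, and your two remaining steps are trivial reroutings of the paper's (it shows $[\overline{\eta}]\neq 0$ in $H^{0,1}_{\overline{\partial}}(M)$ directly by testing $\overline{\eta}=\overline{\partial}f$ against $\omega^{n-1}$, rather than via Serre duality on $H^{n,n-1}_{\overline{\partial}}$, and it phrases the $\partial\overline{\partial}$-lemma obstruction through $\partial\overline{\eta}$ rather than $\partial\omega^{n-1}$). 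The only blemish is the inconsequential sign in $c=-\int_M\overline{\eta}\wedge\overline{\partial}\Omega$, which should be $+$; the conclusion $c=0$ is unaffected.
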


It was shown in \cite[Theorem 4]{ZhaoZ19Str} that any compact non-K\"ahler BKL manifold does not admit any strongly Gauduchon metric. We speculate that the same should be true for compact non-balanced BTP manifolds, and we propose the following

\begin{conjecture}\label{conj1.12}
Let $(M^n,g)$ be a compact non-balanced BTP manifold. Then $M$ does not admit any strongly Gauduchon metric.
\end{conjecture}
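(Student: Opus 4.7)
The strategy is to adapt the approach of \cite[Theorem 4]{ZhaoZ19Str}, where non-existence of strongly Gauduchon metrics was proved for compact non-K\"ahler BKL manifolds, to the larger class of compact non-balanced BTP manifolds. Popovici's duality characterization \cite{Pop} states that a compact complex manifold fails to admit any strongly Gauduchon metric precisely when it carries a non-zero real $d$-closed positive $(1,1)$-current $T$ of the form $\overline{\partial}\alpha + \partial\overline{\alpha}$ for some $(1,0)$-current $\alpha$. The plan is to produce such an obstructing current from the distinguished structural data supplied by Proposition \ref{prop1.7}.

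The natural first candidate is
\[
T \ =\ d(\eta+\overline{\eta})\ =\ \overline{\partial}\eta + \partial\overline{\eta},
\]
where the second equality uses $\partial\eta=0$ and $\overline{\partial}\overline{\eta}=0$ from (\ref{eq:1.8}). This $T$ is automatically a real, $d$-closed, $(1,1)$-form, and it is $(\overline{\partial}+\partial)$-exact via $\alpha=\eta$. Moreover $T=0$ if and only if $g$ is locally conformally balanced. In the non locally conformally balanced sub-case, a direct computation using the explicit formula for $\overline{\partial}\eta$ in (\ref{eq:1.8}) and the $\nabla^b$-parallelism of the tensor $\phi$ expresses the associated Hermitian matrix of $T$ in terms of the eigenvalues $\lambda a_i$ of $\phi$, so that positivity of $T$ reduces to a sign condition on the imaginary parts of the constants $a_1,\ldots,a_{n-1}$. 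In the remaining (locally conformally balanced) case, which includes all Vaisman manifolds by Proposition \ref{prop1.8}, this first candidate vanishes identically and a separate construction is needed, most naturally built out of the $\nabla^b$-parallel horizontal distribution $H=\ker\eta$ and its associated horizontal K\"ahler volume form.

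The main obstacle is thus twofold. First, when the $a_i$ have imaginary parts of mixed sign, the naive $T$ fails to be positive and must be replaced by a weighted sum $\sum_I d(f_I(\eta+\overline{\eta}))$ over the $\phi$-eigenspaces, the weights being chosen so that the commutation relations (\ref{eq:1.11}) and the identity (\ref{eq:1.10}) preserve both $d$-closedness and $(\overline{\partial}+\partial)$-exactness while forcing the resulting Hermitian matrix to be positive semi-definite and non-trivial. Second, in the locally conformally balanced case one needs an essentially different Vaisman-type obstruction, ideally drawn from a structure theorem for compact non-balanced BTP manifolds extending the Ornea--Verbitsky theorem \cite{OV03} for compact Vaisman manifolds; the expected form is that such a manifold should fibre in a weighted sense over a lower-dimensional Sasakian-like base, from which the non-existence of a strongly Gauduchon metric follows as in \cite{AndV}. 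I expect the bulk of the technical work to lie in this structural extension, since it would not only settle the second case but also streamline the combinatorics of the $a_i$ in the first case.
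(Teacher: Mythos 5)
This statement is Conjecture \ref{conj1.12}, which the paper does \emph{not} prove: the authors explicitly state, in the discussion around equation (\ref{eq:4.8}), that the argument of \cite[Theorem 3]{ZhaoZ19Str} does not go through and that they can only verify the conjecture under the extra hypothesis $B\geq \mathrm{Ric}(Q)$ (equivalently, all eigenvalues of $\phi$ have non-negative real parts), which covers the Vaisman case. Your proposal is likewise not a proof; it is a strategy sketch whose two ``main obstacles'' are exactly the open content of the conjecture, and both are left unresolved: the ``weighted sum over $\phi$-eigenspaces'' is never constructed, and the locally conformally balanced case is deferred to an Ornea--Verbitsky-type structure theorem for general compact non-balanced BTP manifolds that does not exist in the literature.

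There is also a concrete error in your choice of candidate current. Under an admissible frame one has $\eta=\lambda\varphi_n$ with $a_1+\cdots+a_{n-1}=\lambda>0$, and by (\ref{eq:1.8}) the form $d(\eta+\overline{\eta})=\overline{\partial}\eta+\partial\overline{\eta}$ has Hermitian matrix proportional to $\mathrm{Im}(a_i)$ on the diagonal. Since $\sum_i \mathrm{Im}(a_i)=\mathrm{Im}(\lambda)=0$, this form is traceless: whenever it is non-zero its eigenvalues have mixed signs, so it can \emph{never} serve as a non-zero positive obstruction current. Your ``sign condition on the imaginary parts'' is therefore vacuous, and your case split (locally conformally balanced versus not) is organized around the wrong invariant. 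The natural candidate consistent with \cite{ZhaoZ19Str} is instead $\frac{\sqrt{-1}}{2}\,d(\overline{\eta}-\eta)$, which is real, $d$-exact, has Hermitian matrix $\frac{1}{2}(\phi+\phi^{\ast})$ with trace $\lambda^2>0$ (hence is automatically non-zero), and is positive semi-definite precisely when all $\mathrm{Re}(a_i)\geq 0$ --- recovering exactly the partial case $B\geq\mathrm{Ric}(Q)$ already recorded in the paper. The genuinely open case is when some $\mathrm{Re}(a_i)<0$, and your proposal offers no mechanism for it.
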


Note that the above conjecture is valid when $B \geq \mbox{Ric}(Q)$, as observed in the discussion after the proof of Corollary \ref{cor1.11}.
It is easy to verify $B \geq \mbox{Ric}(Q)$ holds for Vaisman manifolds, thus Conjecture \ref{conj1.12} is true for all Vaisman manifolds.
This special case has been confirmed by Angella-Otiman \cite{AO}. The above conjecture could also be regarded as a variant
of two famous conjectures by Streets-Tian and Fino-Vezzoni. Here we recall that a Hermitian metric g is called {\em Hermitian symplectic,}
if its K\"ahler form $\omega$ satisfies $\partial \omega = -\overline{\partial} \alpha$ and $\partial \alpha =0$ for some $(2,0)$-form $\alpha$.
Equivalently, there exists a $(2,0)$-form $\alpha$ on the manifold such that $d(\alpha + \omega + \overline{\alpha })=0$.
Such a metric is always pluriclosed, namely, $\partial\overline{\partial}  \omega=0$.

\begin{conjecture}[{\bf Streets-Tian \cite{ST10}}]
If a compact complex manifold $M^n$ admits a
Hermitian-symplectic metric, then it must be K\"ahlerian (namely, it admits a K\"ahler metric).
\end{conjecture}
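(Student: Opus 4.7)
The plan is to attack the Streets-Tian conjecture by combining the cohomological restrictions coming from the closed real $2$-form $\Omega := \alpha + \omega + \overline{\alpha}$ with the parabolic theory of pluriclosed flow. First I would record the immediate consequences of the Hermitian-symplectic hypothesis: $g$ is pluriclosed (so $\partial\overline{\partial}\omega=0$), the Bott-Chern class $[\omega]_{BC} \in H^{1,1}_{BC}(M)$ has non-zero image in $H^2_{dR}(M,\mathbb{R})$ represented by the positive $(1,1)$-part of $\Omega$, and in particular $g$ is strongly Gauduchon in the sense of Popovici. These already place $M$ outside the reach of the obstructions used in Corollary \ref{cor1.11}, but do not by themselves force K\"ahlerness.

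The first substantive strategy is to mimic Chiose's argument from the Fujiki class $\mathcal{C}$ case: combine closedness of $\Omega$ with a Demailly-P\u{a}un style mass inequality against positive Gauduchon $(n-1,n-1)$-currents, and try to rule out the existence of a non-zero positive current $T$ that is $\partial$-exact modulo $\overline{\partial}$-coboundaries. If no such obstruction current exists, one expects a Lamari/Buchdahl-type duality to produce a K\"ahler class in $[\omega]_{BC}$. The second strategy is to run the Streets-Tian pluriclosed flow $\partial_t \omega_t = -(\rho^{1,1}_t)^{\circ}$ starting from $\omega$: the Hermitian-symplectic condition is preserved along the flow, so long-time existence plus convergence to a static limit would yield a metric whose stationarity forces Chern Ricci compatibility of K\"ahler type. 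A third, more algebraic route, closer in spirit to the rest of this paper, would be to pass from the Hermitian-symplectic data to an induced torsion-rigid metric (for example via averaging or a canonical modification), apply Theorem \ref{theorem1.1} to extract tight curvature identities in the spirit of Proposition \ref{prop1.5}, and then play these identities against the positivity of $[\omega]_{BC}$; the $b_1 > 0$ case would be reduced first through the Albanese map.

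The hard part, and the reason the conjecture has remained open since \cite{ST10}, is that none of these strategies is presently known to close. Pluriclosed flow can develop finite-time singularities in the absence of a priori K\"ahler-type bounds, and no general long-time convergence result is available; Chiose's mass-transport argument leans essentially on the existence of a Moishezon (or at least Fujiki class $\mathcal{C}$) model and does not extend once that hypothesis is dropped; and no purely algebraic obstruction to non-K\"ahler Hermitian-symplectic structures is currently known in dimension $n \geq 3$. A genuine resolution would likely require either a substantially sharpened regularity and convergence theory for pluriclosed flow, or a fundamentally new cohomological obstruction on compact non-K\"ahler complex manifolds that carry a de Rham class with strictly positive $(1,1)$-component, and in either case the present paper's identification of the BTP condition with Bismut-curvature data (Theorem \ref{theorem1.1}) is a natural testing ground for such an obstruction.
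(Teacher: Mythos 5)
The statement you were asked to prove is labelled as a \emph{conjecture} in the paper (the Streets--Tian Conjecture, quoted from \cite{ST10}), and the paper offers no proof of it in general; it remains open. Your proposal is therefore correctly calibrated: you survey the known lines of attack (cohomological/mass-transport arguments in the style of Chiose and Demailly--P\u{a}un, pluriclosed flow, and a hypothetical reduction to torsion-rigid metrics), and you are explicit that none of them closes. There is no gap to point out in the usual sense, because you do not claim to have a proof, and indeed no proof exists to compare against.

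The only substantive comparison available is with what the paper actually does establish, namely Corollary \ref{cor1.17}: the Streets--Tian Conjecture holds for compact \emph{non-balanced BTP threefolds}. That argument is far more concrete than anything in your proposal. Under an admissible frame the torsion of a non-balanced BTP threefold degenerates to the two components $T^1_{13}=a_1$, $T^2_{23}=a_2$, so $\partial\overline{\partial}\omega$ reduces to the single term $\frac{\sqrt{-1}}{4}\,c\,\varphi_1\varphi_2\overline{\varphi}_1\overline{\varphi}_2$ with $c=a_1\overline{a}_2+\overline{a}_1a_2$; pairing against any pluriclosed (in particular Hermitian-symplectic) metric $\omega_0$ and integrating by parts forces $c=0$, hence $g$ is BKL, and non-K\"ahler BKL manifolds are already known to exclude Hermitian-symplectic metrics by \cite[Theorem 1]{YZZ}. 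If your goal is to use this paper's machinery toward the conjecture, that integration-by-parts mechanism against the explicit $P^{j\ell}_{\,ik}$ tensor of Proposition \ref{plcld} is the lever to study, rather than the flow or duality approaches, which the BTP framework does not touch.
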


In other words, compact non-K\"ahlerian manifolds can not admit any Hermitian-symplectic metric. They confirmed the above conjecture for $n=2$.

\begin{conjecture}[{\bf Fino-Vezzoni \cite{FV2}}]
If a compact complex manifold $M^n$ admits a pluriclosed metric $g$ and a balanced metric $h$,
then it must admit a K\"ahler metric.
\end{conjecture}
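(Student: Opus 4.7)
The plan is to address the Fino--Vezzoni conjecture via the cohomological duality between pluriclosed and balanced structures. The pluriclosed hypothesis $\partial\overline{\partial}\omega_g = 0$ makes $[\omega_g]_A$ a well-defined Aeppli class in $H^{1,1}_A(M,{\mathbb R})$, while the balanced hypothesis $d\omega_h^{n-1}=0$ makes $[\omega_h^{n-1}]_{BC}$ a well-defined Bott--Chern class in $H^{n-1,n-1}_{BC}(M,{\mathbb R})$. The natural pairing $H^{1,1}_A(M,{\mathbb R}) \times H^{n-1,n-1}_{BC}(M,{\mathbb R}) \to {\mathbb R}$ evaluates on these two classes to $\int_M \omega_g \wedge \omega_h^{n-1} > 0$, so one starts with a positive Aeppli class that is positively paired with a balanced Bott--Chern class. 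The goal is to produce a closed positive representative of the Aeppli class, which by definition is a K\"ahler form.

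The core strategy I would attempt is in the spirit of Chiose and Popovici: seek a $(1,0)$-form $\alpha$ and a function $u$ such that $\tilde{\omega} := \omega_g + \partial\alpha + \overline{\partial\alpha} + \sqrt{-1}\partial\overline{\partial}u$ satisfies $\partial\tilde{\omega}=0$ and $\tilde{\omega}>0$. The closedness is a linear constraint on $\alpha$ compatible with the pluriclosed condition, while the positivity is forced by a Monge--Amp\`ere type problem in which $\omega_h^{n-1}$ serves as a fixed reference volume form. The positive pairing above is used to extract uniform mass estimates, and one hopes that a maximum-principle / mass-transport argument upgrades these to the pointwise positivity of $\tilde{\omega}$.

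The main obstacle --- and the reason this conjecture remains open in general --- is precisely the gap between integral and pointwise information: the balanced form $\omega_h^{n-1}$ controls volumes but not the fully nonlinear determinant that appears in the Kählerness condition, and the pluriclosed condition only annihilates one differential order of $\omega_g$ rather than the full $d\omega_g$. Without an additional structural input, such as the $\partial\overline{\partial}$-lemma, a special holonomy reduction, or a Lie-theoretic model on the universal cover, the nonlinear estimate appears to need a genuinely new idea.

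Within the framework of the present paper, one obtains a clean verification in the BTP subcase, which I would present as first evidence. Suppose the pluriclosed metric $g$ is moreover BTP. Then by Corollary \ref{cor1.4}, $g$ is in fact BKL. By \cite[Theorem 4]{ZhaoZ19Str}, any compact non-K\"ahler BKL manifold admits no strongly Gauduchon metric; since balanced implies strongly Gauduchon, the hypothesized existence of $h$ forces $g$ to be K\"ahler, hence $M$ is K\"ahlerian. A natural intermediate program would be to extend this pattern by successively weakening the torsion-parallelism hypothesis --- e.g.\ to manifolds with $\nabla^b \mathrm{Ric}(Q)=0$ alone, or to those satisfying only $\mathrm{Ric}(Q)_{\chi\overline{Y}}=0$ --- before attacking the general Fino--Vezzoni statement.
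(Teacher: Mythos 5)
The statement you were asked to prove is an \emph{open conjecture} that the paper merely quotes from Fino--Vezzoni \cite{FV2}; the paper offers no proof of it in general and only verifies it in special cases (Corollary \ref{cor1.17} for compact non-balanced BTP threefolds, with the balanced BTP case deferred to a sequel). So there is no ``paper's own proof'' to compare against, and your proposal, which explicitly acknowledges that the general statement remains open, cannot be faulted for failing to close it. Your sketch of the Aeppli/Bott--Chern pairing and the Chiose--Popovici-style Monge--Amp\`ere strategy is a fair description of the known approaches and of where they get stuck (the passage from the integral positivity $\int_M \omega_g \wedge \omega_h^{n-1}>0$ to a pointwise positive closed representative), but it is a research program, not an argument.

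Your verification in the BTP subcase is correct and is essentially the same observation the paper makes: if the pluriclosed metric $g$ is BTP, then by Corollary \ref{cor1.4} it is BKL, and by the result cited before Conjecture \ref{conj1.12} (that a compact non-K\"ahler BKL manifold admits no strongly Gauduchon, hence no balanced, metric, \cite{ZhaoZ19Str}) the existence of the balanced metric $h$ forces $g$ to be K\"ahler. Note, however, that this assumes the \emph{pluriclosed} metric itself is BTP, which is a different (and in some ways stronger) hypothesis than the one under which the paper verifies the conjecture: Corollary \ref{cor1.17} assumes only that $M^3$ carries a non-balanced BTP metric and a (possibly unrelated) pluriclosed metric, and the argument there is an integration-by-parts computation showing $\int_M \partial\overline{\partial}\omega \wedge \omega_0 = 0$ forces the torsion invariant $c=a_1\overline{a}_2+\overline{a}_1a_2$ to vanish, hence $g$ is BKL. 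If you want to present evidence within this paper's framework, that decoupled version is the stronger and more relevant special case.
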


As another consequence of Theorem \ref{theorem1.1}, it intrigues us to study whether
$\mathrm{Ric}(Q)=0$ on a BTP manifold, which means  that the three Bismut Ricci curvatures coincide,
actually implies $Q=0$, that is, the manifold becomes BKL and Bismut curvature obeys K\"ahler symmetries.
It is clear from the last equality in \eqref{eq:1.12} that this question only concerns the non-balanced case,
since in the balanced case the assumption $\mathrm{Ric}(Q)=0$ would force $B=0$ thus $g$ is K\"ahler.
Utlizing Theorem \ref{theorem1.1}, Proposition \ref{prop1.5}, Proposition \ref{prop1.7}
and Proposition \ref{prop1.10}, we obtain the following result

\begin{proposition}\label{RicQ}
Let $(M^n,g)$ be a non-balanced BTP manifold satisfying $\mathrm{Ric}(Q)=0$.
When $n=3,4$, it holds that $Q=0$, while, there exists a BTP nilmanifold of complex dimension $5$ satisfying $\mathrm{Ric}(Q)=0$ but $Q \neq 0$.
\end{proposition}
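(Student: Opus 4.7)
The plan is to work throughout in the admissible frame provided by Proposition \ref{prop1.7}. There $\phi$ and $\phi^*$ become diagonal with eigenvalues $\lambda a_i$ and $\lambda\overline{a_i}$, so the identity $\mathrm{Ric}(Q)=B-\phi-\phi^*$ from \eqref{eq:1.12} translates the hypothesis $\mathrm{Ric}(Q)=0$ into $B_{i\bar{j}}=2\lambda\,\mathrm{Re}(a_i)\,\delta_{ij}$. Splitting $B_{i\bar{j}}=\sum_{r,s}T^j_{rs}\overline{T^i_{rs}}$ according as the summation indices equal $n$ or not and using $T^j_{in}=a_i\delta_{ij}$, one obtains for $i,j<n$
\[ \sum_{r,s<n}|T^i_{rs}|^2 = 2\bigl(\lambda\,\mathrm{Re}(a_i)-|a_i|^2\bigr), \qquad \sum_{r,s<n}T^j_{rs}\overline{T^i_{rs}}=0\ \ (i\neq j). \]
These orthogonality and norm relations on the ``inner torsion vectors'' are the main algebraic input for what follows.

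For $n=3$, the admissibility condition \eqref{eq:1.14} rules out all inner torsion except possibly $T^1_{12}$ (when $a_2=0$) or $T^2_{12}$ (when $a_1=0$). In the generic case $a_1,a_2\neq 0$ no inner torsion exists and the norm equation collapses to $|a_i|^2=\lambda\,\mathrm{Re}(a_i)$, which together with $a_1+a_2=\lambda$ yields $2\,\mathrm{Re}(a_1\overline{a_2})=2\lambda\,\mathrm{Re}(a_1)-2|a_1|^2=0$. Expanding $Q_{i\bar{j}k\bar{\ell}}$ through \eqref{eq:1.7}, the potentially nonzero ``inner'' components reduce to $\pm 2\,\mathrm{Re}(a_i\overline{a_j})$ and so vanish, while the components with a boundary index $n$ cancel in pairs after enumeration of the nonzero $T$'s. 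In the degenerate cases $a_i=0$, the allowed inner torsion is killed directly by the norm equation, reducing to the same computation; hence $Q=0$.

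For $n=4$ the same strategy applies but the combinatorics are richer. The constraint $(a_i+a_k-a_j)T^j_{ik}=0$ permits nonzero inner torsion only when $a_j=a_i+a_k$, and given $a_1+a_2+a_3=\lambda$ with $a_4=0$ this admits only a small number of algebraic configurations of the $a_i$. The pairwise orthogonality plus the norm constraints above cut these possibilities down further, and a configuration-by-configuration expansion of \eqref{eq:1.7} verifies that every component $Q_{i\bar{j}k\bar{\ell}}$ vanishes: the boundary components cancel in pairs as in the $n=3$ argument, while the inner components vanish through the algebraic identities imposed by $\mathrm{Ric}(Q)=0$.

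For the counterexample in $n=5$ I would invoke Proposition \ref{prop1.10}. Taking for example $r=2$ gives $d\varphi_1=d\varphi_2=0$ and $d\varphi_\alpha=Y_{\alpha 1}\varphi_1\wedge\overline{\varphi_1}+Y_{\alpha 2}\varphi_2\wedge\overline{\varphi_2}$ for $\alpha=3,4,5$. The BKL criterion from Proposition \ref{prop1.10} reads $\sum_{\alpha=3}^{5}(Y_{\alpha 1}\overline{Y_{\alpha 2}}+\overline{Y_{\alpha 1}}Y_{\alpha 2})=0$, whereas $\mathrm{Ric}(Q)=0$ translates, through $B=\phi+\phi^*$ computed explicitly from the $Y_{\alpha i}$, into a distinct scalar identity on the matrix $(Y_{\alpha i})$. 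Since there are six free complex parameters but only the $\mathrm{Ric}(Q)=0$ identity to impose beyond BTP, one can choose $(Y_{\alpha i})$ satisfying $\mathrm{Ric}(Q)=0$ while violating BKL, producing the required example. The main obstacle is the $n=4$ case: unlike $n=3$, the admissible configurations of $(a_1,a_2,a_3)$ can genuinely support nontrivial inner torsion, so the verification of $Q=0$ is no longer automatic but requires a systematic enumeration of the possible support patterns of the inner torsion before the component-wise vanishing of $Q$ can be concluded.
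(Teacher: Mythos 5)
Your reduction of the hypothesis to $B_{i\bar j}=2\lambda\,\mathrm{Re}(a_i)\,\delta_{ij}$ and the resulting norm/orthogonality relations are correct, and your $n=3$ computation does go through. But there are two genuine problems. First, the $n=4$ case is not actually proved: you acknowledge yourself that it ``requires a systematic enumeration of the possible support patterns of the inner torsion,'' and you do not carry this enumeration out. The paper avoids it entirely by working with $Q=-P$ rather than with the torsion configurations: by \eqref{eq:1.7} one has $Q_{i\bar j k\bar\ell}=-P^{j\ell}_{\,ik}$, and the identity \eqref{eq:1.10} (equivalently \eqref{eq:1.14}) gives $P^{jn}_{\,ik}=(\overline{a}_j-\overline{a}_k-\overline{a}_i)T^j_{ik}=0$ for \emph{every} admissible configuration, so $P$ vanishes whenever an index equals $n$. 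The surviving components for $n=4$ are then killed purely by linear algebra: $\mathrm{Ric}(Q)_{i\bar j}=0$ for $i\neq j$ forces $P^{j\ell}_{\,ik}=0$ when three of the indices in $\{1,2,3\}$ are distinct, leaving only $S_{ik}=P^{ik}_{\,ik}$, and the diagonal conditions $S_{12}+S_{13}=S_{12}+S_{23}=S_{13}+S_{23}=0$ force $S\equiv 0$. No case analysis of the $a_i$ or of which inner torsion components are nonzero is ever needed.

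Second, and more seriously, your $n=5$ counterexample cannot work as proposed. For the nilmanifolds of Proposition \ref{prop1.10} with $r=2$, the condition $\mathrm{Ric}(Q)=0$ is \emph{not} a ``distinct scalar identity'' from the BKL criterion: both reduce to the single equation $\sum_{\alpha=3}^{5}\bigl(Y_{\alpha 1}\overline{Y_{\alpha 2}}+\overline{Y_{\alpha 1}}Y_{\alpha 2}\bigr)=0$, since the trace over $k\neq i$ in the $\mathrm{Ric}(Q)=0$ condition involves only the one off-diagonal pair $(1,2)$ when $r=2$. As BTP already gives $R^b_{ijk\bar\ell}=0$, the BKL criterion is exactly $Q=0$, so any $r=2$ example with $\mathrm{Ric}(Q)=0$ automatically has $Q=0$. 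The same collapse happens for $r=3$ (the three trace equations $S_{12}+S_{13}=S_{12}+S_{23}=S_{13}+S_{23}=0$ again force all $S_{ik}=0$, exactly as in the $n=4$ argument). One genuinely needs $r=4$, $n=5$, where the four trace equations in the six unknowns $S_{ik}$ admit nontrivial solutions; the paper takes $d\varphi_5=\sum_{i=1}^4 Y_i\,\varphi_i\wedge\overline{\varphi}_i$ with $Y_1=1+2\sqrt{-1}$, $Y_2=Y_3=1-\sqrt{-1}$, $Y_4=\sqrt{-1}$, which satisfies all four trace conditions while $Y_1\overline{Y}_2+\overline{Y}_1Y_2\neq 0$.
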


The second main result  of this article is the following, which gives the structure (classification) of non-balanced BTP threefolds:

\begin{theorem}\label{3DNBBTP}
Let $(M^3,g)$ be a complete non-balanced BTP threefold. If we denote by $\{ \lambda a_1, \lambda a_2,0\}$ the three eigenvalues of the tensor $\phi$, then the following hold:
\begin{enumerate}
\item\label{BKL} when $a_1\overline{a}_2+\overline{a}_1a_2 = 0$, $(M^3,g)$ is BKL, and the universal cover $\widetilde{M}$ is holomorphically isomorphic to either a product of a BKL surface and a K\"ahler curve, or a product $N_1 \times N_2$ of two Sasakian $3$-manifolds ,
\item\label{twSSK} when $a_1\overline{a}_2+\overline{a}_1a_2 \neq 0$ and $a_1\overline{a}_2-\overline{a}_1a_2 \neq 0$, the universal cover $\widetilde{M}$ of $M^3$ is holomorphically isomorphic to a twisted product $N_1 \times_{\kappa} N_2 $ of two Sasakian $3$-manifolds,

\item\label{gV} the remaining case is when both $a_1$ and $a_2$ are real and non-zero, in this case $(M^3,g)$ is locally conformally balanced, with its universal cover $\widetilde{M}$ holomorphically isomorphic to a product $N \times \mathbb{R}$ of a generalized Sasakian $5$-manifold and the real line.
\end{enumerate}
\end{theorem}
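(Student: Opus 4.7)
The plan is to work throughout in a local admissible unitary frame $\{e_1, e_2, e_3\}$ provided by Proposition \ref{prop1.7}, so that $\eta = \lambda \varphi_3$, $T^3_{ij}=0$, $T^j_{i3} = a_i \delta_{ij}$, with $a_1+a_2=\lambda>0$ and $a_3=0$. The tensor $\phi$ is then diagonal with global-constant eigenvalues $\lambda a_1, \lambda a_2, 0$. First I would apply \eqref{eq:1.14} with $(i,k)=(1,2)$: this gives $(a_1+a_2-a_j)T^j_{12} = (\lambda-a_j)T^j_{12} = 0$, so $T^j_{12}=0$ whenever $a_j\neq \lambda$. Since $a_3=0\neq \lambda$, one always has $T^3_{12}=0$; and excluding the degenerate sub-cases $a_1=0$ or $a_2=0$ — both of which clearly fall inside Case \eqref{BKL} — this forces $T^1_{12}=T^2_{12}=0$ as well, so the only surviving Chern torsion components are $T^1_{13}=a_1$ and $T^2_{23}=a_2$.

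Next I would plug this torsion data into \eqref{eq:1.7} and \eqref{eq:1.9} and compute $Q$ and the mixed components of $R^b$ in the admissible frame. A direct calculation shows that, up to symmetries, the unique obstruction to the K\"ahler identity $Q\equiv 0$ sits in $Q_{1\bar 2 2\bar 1}$, which is a nonzero multiple of $a_1\overline{a}_2+\overline{a}_1a_2$; similarly, the unique obstruction to the Bismut curvature being block-diagonal with respect to the decomposition of $T^{1,0}M$ into the three eigenlines of $\phi$ is a nonzero multiple of $a_1\overline{a}_2-\overline{a}_1a_2$. This algebraic dichotomy is the geometric source of the three cases in the statement.

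In Case \eqref{BKL}, $Q\equiv 0$, so by Theorem \ref{theorem1.1} the manifold is BKL, and the claim reduces to the structure theorem for complete non-K\"ahler BKL threefolds from \cite{ZhaoZ19Str, YZZ, ZhaoZ23}, once one observes that non-balancedness excludes the Chern-flat complex semisimple alternative. For Cases \eqref{twSSK} and \eqref{gV}, the reduction of the restricted Bismut holonomy to $U(2)\times 1$ from Proposition \ref{prop1.7}, together with the $\nabla^b$-parallelism of $\phi$ (whose eigenvalues $\lambda a_1, \lambda a_2$ are distinct in both remaining cases), makes each eigenline subbundle $L_1, L_2$ individually $\nabla^b$-parallel, along with $L_3=\mathbb C\chi$. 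Because $T(e_i,e_3)\in L_i$ and $T(e_i,\overline{e_3})=0$, each distribution $L_i\oplus L_3$ is Frobenius-integrable, and on each leaf $\chi$ restricts to a holomorphic vector field of constant norm (Proposition \ref{prop1.7}) generating the Reeb-type flow, so the leaves are Sasakian $3$-manifolds $N_1,N_2$. In Case \eqref{gV}, the mixed curvature vanishes, and a de Rham-type argument using completeness produces a genuine (untwisted) holomorphic product on $\widetilde M$; reality of the $a_i$'s further gives local conformal balancedness (Proposition \ref{prop1.7}), so $\mathrm{Re}(\eta)$ yields a closed parallel real $1$-form splitting off an $\mathbb R$-factor and leaving a five-dimensional leaf $N$ that one checks satisfies the generalized Sasakian conditions from the preceding discussion. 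In Case \eqref{twSSK}, the non-vanishing mixed curvature forces a non-trivial twisting, whose twisting function $\kappa$ is built from $\mathrm{Im}(a_1\overline{a}_2)$.

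The main obstacle I anticipate is the global construction in Case \eqref{twSSK}: extracting the twisting function $\kappa$ from the $\nabla^b$-parallel curvature data, ensuring that the locally-defined admissible frames piece together to a well-defined geometric structure on $\widetilde M$, and verifying that the resulting $N_1\times_\kappa N_2$ indeed recovers the complex and Hermitian data of the cover. The algebraic parts — tabulating $Q$ and $R^b$ in the admissible frame, invoking BKL classification in Case \eqref{BKL}, and splitting off the $\mathbb R$-factor in Case \eqref{gV} — are comparatively routine once the torsion has been reduced as in the first step; the delicate work is turning the two parallel Sasakian foliations into an actual twisted product and matching the twisting parameter with $\mathrm{Im}(a_1\overline{a}_2)$ up to the correct normalization.
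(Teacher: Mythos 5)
Your overall strategy---admissible frame, reduction of the torsion to $T^1_{13}=a_1$, $T^2_{23}=a_2$, then a case split governed by $a_1\overline{a}_2\pm\overline{a}_1a_2$---is the same as the paper's, and your identification of $a_1\overline{a}_2+\overline{a}_1a_2$ with the sole obstruction to $Q\equiv 0$ (hence to BKL) is correct. But there is a genuine structural error in how you produce the two Sasakian factors in Case \eqref{twSSK}. The distributions $L_i\oplus L_3$ you propose are complex $2$-dimensional, i.e.\ real $4$-dimensional, so their leaves cannot be Sasakian $3$-manifolds; dimensionally you cannot obtain $\widetilde M = N_1\times N_2$ with $\dim_{\mathbb R}N_i=3$ from two $4$-dimensional foliations of a $6$-manifold. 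The actual mechanism in the paper is to split the \emph{real} $2$-plane underlying $\mathbb{C}e_3$ into $\mathbb{R}\xi_1\oplus\mathbb{R}\xi_2$, where $\xi_i$ is the unit real vector proportional to $a_i\overline{e}_3-\overline{a}_ie_3$, and to form the real $3$-dimensional distributions $E_i=\mathrm{span}_{\mathbb R}\{\mathrm{Re}(e_i),\mathrm{Im}(e_i),\xi_i\}$; the Levi-Civita structure equations then show each $E_i$ is integrable with $\xi_i$ as Reeb field. This also corrects your reading of the condition $a_1\overline{a}_2-\overline{a}_1a_2\neq 0$: its role is not block-diagonality of the Bismut curvature (the Bismut connection matrix is already diagonal whenever $a_1\neq a_2$) but the linear independence of $\xi_1$ and $\xi_2$, i.e.\ the very existence of the splitting $\mathbb{R}\xi_1\oplus\mathbb{R}\xi_2$; when it vanishes one instead gets $\xi_2=J\xi_1$ with $\nabla\xi_2=0$ and the $5+1$ splitting of Case \eqref{gV}.

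Two further points. First, the step you flag as the ``main obstacle''---realizing the Hermitian structure on $N_1\times N_2$ as Belgun's twisted product $N_1\times_\kappa N_2$ and pinning down $\kappa$---is indeed where the remaining work lies; the paper resolves it with an explicit structure-equation computation (Lemma \ref{twisted}) matching $\kappa=x+y\sqrt{-1}$ with $x=-\sqrt{-1}\,(a\overline{b}+\overline{a}b)/(a\overline{b}-\overline{a}b)$ and $y=2\sqrt{-1}\,|a||b|/(a\overline{b}-\overline{a}b)$ (in the notation $2a=a_1$, $2b=a_2$), so your sketch leaves the essential construction undone. Second, a minor gap: your derivation of torsion degeneracy from \eqref{eq:1.14} fails to give $T^2_{12}=0$ when $a_1=0$ (and symmetrically), and deferring those subcases to ``Case \eqref{BKL}'' is circular since you need the full torsion to verify $Q=0$ there; the uniform argument is the trace identity $\eta_i=\sum_k T^k_{ki}$, which gives $T^1_{12}=\eta_2-T^3_{32}=0$ and $T^2_{12}=-\eta_1+T^3_{31}=0$ unconditionally, as in Proposition \ref{BTP_LP}.
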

The {\em twisted product of Sasakian manifolds} is the modified Hermitian structure on the product of two Sasakian manifolds, introduced by Belgun \cite{B12}, which we will recall in Definition \ref{SSK} and \ref{SSKproduct}.
The notion of {\em generalized Sasakian manifold} will also be recalled in \S \ref{NBBTP}. Note that the last case in Theorem \ref{3DNBBTP} include all Vaisman threefolds, which correspond to the case of $a_1=a_2=\frac{\lambda }{2}$.

We will end the article with a couple of interesting corollaries of Theorem \ref{3DNBBTP}. To state the first one, let us introduce a terminology:

\begin{definition}
A Hermitian manifold $(M^n,g)$ is said to be {\em Bismut $($holonomy$)$ abelian,}
if the global holonomy group $\mbox{Hol}(\nabla^b)$ is an abelian group.
\end{definition}

Since any abelian subgroup of $U(n)$ is conjugated to a subgroup of $U(1)^n=U(1)\times \cdots \times U(1)$, the above definition implies locally there always exists unitary frame $e$ under which the Bismut connection matrix is diagonal. The Bismut curvature matrix under $e$ is also diagonal.

\begin{corollary} \label{cor1.16}
Let $(M^3,g)$ be a non-balanced BTP threefold. Then it is Bismut abelian unless it is Vaisman.
\end{corollary}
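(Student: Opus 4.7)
The plan is to combine the admissible-frame picture from Proposition \ref{prop1.7} with Proposition \ref{prop1.8}(2), together with the fact that $\phi$ is a $\nabla^b$-parallel endomorphism, to reduce the corollary to a short linear-algebra argument about unitary maps preserving a flag of eigenspaces.

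For the non-balanced BTP threefold $(M^3, g)$, I would work locally in an admissible unitary frame $\{e_1, e_2, e_3\}$. In this frame $\eta = \lambda \varphi_3$ with $\lambda > 0$ and $\phi = \lambda\,\mathrm{diag}(a_1, a_2, 0)$. By Proposition \ref{prop1.7}, $\chi$ is $\nabla^b$-parallel, so the restricted Bismut holonomy $\mathrm{Hol}_0(\nabla^b)$ is contained in $U(2) \times \{1\}$, fixing $e_3$ and acting unitarily on $\langle e_1, e_2 \rangle$. Moreover, $\phi$ is $\nabla^b$-parallel with globally constant eigenvalues $\lambda a_1, \lambda a_2, 0$ (as already recorded in the text preceding Proposition \ref{prop1.7}), so the holonomy preserves each of its eigenspaces. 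By Proposition \ref{prop1.8}(2), the non-Vaisman assumption translates to $a_1 \neq a_2$.

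I would then dispose of two exhaustive sub-cases. If both $a_1, a_2$ are non-zero, the three eigenvalues of $\phi$ are pairwise distinct, so the three lines $\langle e_1 \rangle, \langle e_2 \rangle, \langle e_3 \rangle$ are individually preserved, forcing $\mathrm{Hol}_0(\nabla^b) \subset U(1) \times U(1) \times \{1\}$. If instead one of $a_1, a_2$ (say $a_1$) vanishes, then $a_2 = \lambda$, the zero-eigenspace of $\phi$ is the $2$-plane $\langle e_1, e_3 \rangle$, and the other eigenspace is the line $\langle e_2 \rangle$; both are preserved by the holonomy. Since the holonomy additionally fixes $e_3 \in \langle e_1, e_3 \rangle$ and is unitary, it must also preserve $\langle e_1 \rangle$, the orthogonal complement of $\langle e_3 \rangle$ inside $\langle e_1, e_3 \rangle$. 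Again $\mathrm{Hol}_0(\nabla^b) \subset U(1) \times U(1) \times \{1\}$.

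In either case the holonomy lies in a maximal torus of $U(2) \times \{1\}$, and hence is abelian. The proof is essentially elementary once $\nabla^b \phi = 0$ is taken as a black box; the only mild point to verify is that the two sub-cases really exhaust the non-Vaisman regime, which is immediate from $a_1 + a_2 = \lambda > 0$ together with $a_1 \neq a_2$.
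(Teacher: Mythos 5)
Your proof is correct, and it reaches the conclusion by a route that is parallel to, but packaged differently from, the paper's. The paper works directly at the level of the connection form: in an admissible frame the only possibly nonzero torsion components are $T^1_{13}=a_1$ and $T^2_{23}=a_2$, and expanding $0=\nabla^bT^2_{13}$ gives $(a_2-a_1)\,\theta^b_{12}=0$, so $a_1\neq a_2$ forces $\theta^b$ (hence $\Theta^b$) to be diagonal in that frame; the alternative $a_1=a_2=\frac{\lambda}{2}$ is identified as the Vaisman case via Proposition \ref{prop1.8}(2), exactly as you do. Your argument replaces this one-line computation by the holonomy principle applied to the $\nabla^b$-parallel endomorphism $\phi$: parallel transport around loops commutes with $\phi_p$, hence preserves its eigenspaces. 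These are essentially the group-level and Lie-algebra-level versions of the same mechanism, since $\nabla^b\phi=0$ written out in the admissible frame is precisely $\lambda(a_2-a_1)\theta^b_{12}=0$ together with the already-known vanishing of $\theta^b_{i3}$ and $\theta^b_{3i}$. The price of your packaging is the extra sub-case when one of $a_1,a_2$ vanishes, where the eigenspaces of $\phi$ no longer separate $\langle e_1\rangle$ from $\langle e_3\rangle$ and you must invoke the parallel vector $e_3$ to split $\langle e_1,e_3\rangle$ orthogonally; you handle this correctly, and your observation that $a_1+a_2=\lambda>0$ together with $a_1\neq a_2$ makes the two sub-cases exhaustive is also right. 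What your version buys is a cleaner separation of the structural inputs (parallelism of $\chi$ and of $\phi$, plus the spectral condition $a_1\neq a_2$), at the cost of being slightly longer than the paper's direct computation.
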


Actually in the above case the global Bismut holonomy group $\mbox{Hol}(\nabla^b)$ is contained in $U(1)\times U(1)\times 1$ since $g$ is non-balanced BTP. We remark that a Vaisman threefold could also be Bismut abelian, even though the generic ones (including the standard Hopf threefold) is not this case. Bismut abelian Vaisman threefolds \cite[Subsection 5.3]{ZhaoZ25} form a highly interesting subclass, which arise a little surprisingly in the study of balanced BTP threefolds. However, we will see that the conclusion of Corollary \ref{cor1.16} would fail when the BTP threefold is allowed to be balanced in a forthcoming project, in which we will give a classification theorem for compact balanced (non-K\"ahler) BTP threefolds, utilizing a technical lemma obtained in \cite{ZhouZ} which says that any balanced theefold always admits special local frames under which the Chern torsion components take a particularly simple form.

The second corollary of Theorem \ref{3DNBBTP} is the following:

\begin{corollary} \label{cor1.17}
Let $(M^3,g)$ be a compact non-balanced BTP threefold. If $M^3$ admits a pluriclosed metric, then $g$ is BKL.
In particular, the Conjectures of Street-Tian and Fino-Vezzoni are both valid for compact non-balanced BTP threefolds.
\end{corollary}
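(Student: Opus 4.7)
The argument splits into two parts: first, the main claim that $g$ is BKL whenever $M^3$ admits a pluriclosed Hermitian metric; second, the two conjectures follow as vacuous consequences using \cite[Theorem~4]{ZhaoZ19Str}.

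For the main claim, I would proceed by the classification in Theorem~\ref{3DNBBTP}. Case~\eqref{BKL} gives $g$ BKL directly, so the work lies in ruling out Cases~\eqref{twSSK} and~\eqref{gV}, in both of which $a_1,a_2\neq 0$ and $a_1\overline{a_2}+\overline{a_1}a_2\neq 0$. Suppose $M^3$ admits a pluriclosed Hermitian metric $h$. Since $h$ is pluriclosed, integration by parts on the compact $M$ yields
\[
\int_M \omega_h \wedge \partial\overline{\partial}\omega_g = 0.
\]
The plan is to compute $\partial\overline{\partial}\omega_g$ explicitly in the admissible frame $\{e_i\}$ of Proposition~\ref{prop1.7}: equation~\eqref{eq:1.14} together with $T^n_{ij}=0$ and $T^j_{in}=a_i\delta_{ij}$ forces the only nonvanishing Chern torsion components in Cases~\eqref{twSSK} and~\eqref{gV} to be $T^1_{13}=a_1$ and $T^2_{23}=a_2$, while~\eqref{eq:1.8} fixes $d\varphi_3$ completely. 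The remaining $(1,1)$-parts of $d\varphi_1,d\varphi_2$ are pinned down by $\nabla^b T^b=0$, the parallelness of $\phi$, and the holomorphicity of $\chi=\lambda e_3$. A direct calculation should then show that $\partial\overline{\partial}\omega_g$ is a positive universal multiple of $(a_1\overline{a_2}+\overline{a_1}a_2)(\sqrt{-1}\varphi_1\wedge\overline{\varphi_1})\wedge(\sqrt{-1}\varphi_2\wedge\overline{\varphi_2})$, a real $(2,2)$-form of strict definite sign. Pairing against the positive form $\omega_h$ therefore produces an integral with a nonzero sign, contradicting the vanishing above. The delicate step is isolating the coefficient $a_1\overline{a_2}+\overline{a_1}a_2$ after all the $(1,1)$-contributions from the Bismut connection combine.

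With the main claim in hand, both conjectures follow by contradiction. If $M^3$ admits both a pluriclosed and a balanced metric, then the main claim forces $g$ BKL, so by Corollary~\ref{cor1.11} $M$ is non-K\"ahler, and by \cite[Theorem~4]{ZhaoZ19Str} $M$ admits no strongly Gauduchon metric. Since balanced implies strongly Gauduchon, this is a contradiction; hence no such $M^3$ exists and Fino-Vezzoni holds vacuously. For Streets-Tian, one invokes the fact that a Hermitian-symplectic metric $d(\omega+\alpha+\overline{\alpha})=0$ on a complex threefold induces a strongly Gauduchon metric: the $(2,2)$-form $\omega^2+2\alpha\wedge\overline{\alpha}$ is strictly positive (as $\alpha\wedge\overline{\alpha}\geq 0$ for any $(2,0)$-form $\alpha$) and satisfies $\partial(\omega^2+2\alpha\wedge\overline{\alpha})=-2\overline{\partial}(\omega\wedge\alpha)$; Michelsohn's theorem then produces a unique Hermitian $\widetilde{\omega}$ with $\widetilde{\omega}^2=\omega^2+2\alpha\wedge\overline{\alpha}$, which is strongly Gauduchon. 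The same contradiction then rules out any Hermitian-symplectic metric on $M^3$.
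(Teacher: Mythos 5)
Your argument for the main claim is essentially the paper's: work in an admissible frame, where degenerate torsion leaves only $T^1_{13}=a_1$, $T^2_{23}=a_2$, compute $\partial\overline{\partial}\omega_g$ via Proposition \ref{plcld} (all covariant derivatives of $T$ drop out by the BTP condition, and the only surviving component of $P$ is $P^{12}_{\,12}=a_1\overline{a}_2+\overline{a}_1a_2$), and pair against the pluriclosed $\omega_h$ by Stokes to force $a_1\overline{a}_2+\overline{a}_1a_2=0$, which is exactly the BKL case. Two small remarks: the paper does not route this through the case division of Theorem \ref{3DNBBTP} but simply concludes $c=0$ hence BKL, and it obtains the vanishing of the off-diagonal torsion from the trace identities of Proposition \ref{BTP_LP} rather than from \eqref{eq:1.14}, though your route works since $a_1,a_2\neq 0$ in the cases you need to exclude. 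Where you genuinely diverge is the Streets--Tian consequence: the paper simply cites \cite[Theorem 1]{YZZ} that a compact non-K\"ahler BKL manifold admits no Hermitian-symplectic metric, whereas you rederive this by showing a Hermitian-symplectic structure on a threefold yields a strongly Gauduchon metric (the positive $(2,2)$-form $\omega^2+2\alpha\wedge\overline{\alpha}$ has $\partial$-image equal to $-2\overline{\partial}(\alpha\wedge\omega)$, and Michelsohn's square-root theorem converts it into a Hermitian metric) and then invoking the non-existence of strongly Gauduchon metrics on compact non-K\"ahler BKL manifolds. That reduction is correct and makes the two conjectures collapse to a single non-existence statement, at the cost of invoking Michelsohn; the paper's version is shorter but leans on an extra external theorem.
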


The last sentence is due to the fact that any compact non-K\"ahler (or equivalently, non-balanced) BKL manifold can not admit any Hermitian symplectic metric by \cite[Theorem 1]{YZZ} or any balanced (or strongly Gauduchon) metric by \cite[Theorem 3]{ZhaoZ19Str}. Recently,
Streets-Tian conjecture has been confirmed on compact non-balanced BTP manifolds of any complex dimensions, by Y. Guo and
the second named author in \cite{GZ}. In the sequel we will show that given any compact balanced BTP threefold, either it is already K\"ahlerian, or it does not admit any pluriclosed metric. So both conjectures hold for compact balanced BTP threefolds as well.
So in summary we know that {\em both the Streets-Tian Conjecture and the Fino-Vezzoni Conjecture hold for all compact BTP threefolds}.

This article grow from the first half of the preprint \cite{ZhaoZ22} which we put on arXiv about two years ago.
The second half will be revised to give a classification of compact balanced BTP threefolds as a subsequent study \cite{ZhaoZ25}.


\vspace{0.4cm}

\section{Torsion and curvature of Bismut connection}\label{PRE}

First let us set up the notations and fix some terminologies. Let $(M^n,g)$ be a Hermitian manifold of complex dimension $n$.
We will follow the notations of  \cite{YZ18Cur,YZ18Gau,ZhaoZ19Str, Zheng, Zheng1}. Let $g = \langle , \rangle $ be the metric, extended bi-linearly over ${\mathbb C}$.
The bundle of complex tangent vector fields of type $(1,0)$, namely, complex vector fields of the form $v- \sqrt{-1}Jv$, where $v$ is a real vector field on $M$, is denoted by $T^{1,0}M$.
Let $\{ e_1, \ldots , e_n\}$ be a local frame of $T^{1,0}M$ in a neighborhood of $M$. Write $e=\ ^t\!(e_1, \ldots , e_n) $ as a column vector.
Denote by $\varphi = \ ^t\!(\varphi_1, \ldots ,  \varphi_n)$ the column vector of local $(1,0)$-forms which is the coframe dual to $e$, namely, $\varphi_i(e_j)=\delta_{ij}$ for any $1\leq i,j\leq n$.

Denote by $\nabla$, $\nabla^c$, $\nabla^b$ the Levi-Civita (Riemannian), Chern, and Bismut connection, respectively. Denote by $T^c=T$, $R^c$ the torsion and curvature of $\nabla^c$, and by $T^b$, $R^b$ the torsion and curvature of $\nabla^b$. Under the frame $e$, denote the components of $T^c$ as
\begin{equation}
T^c(e_i, \overline{e}_j)=0, \ \ \ T^c(e_i, e_j) =  \sum_{k=1}^n T^k_{ij} e_k.  \label{eq:2.1}
\end{equation}
Note that our $T^j_{ik}$ here is twice as much as the same notation in \cite{YZ18Cur, ZhaoZ19Str},
which will yield some changes in the coefficients.

For Chern connection $\nabla^c$, let us denote by $\theta^c=\theta$,  $\Theta^c=\Theta$ the matrices of connection and
curvature, respectively, and by $\tau$ the column vector of the
torsion $2$-forms, all under the local frame $e$. Then the structure
equations and Bianchi identities are
\begin{eqnarray*}
d \varphi & = & - \ ^t\!\theta \wedge \varphi + \tau,  \label{formula 1}\\
d  \theta & = & \theta \wedge \theta + \Theta. \\
d \tau & = & - \ ^t\!\theta \wedge \tau + \ ^t\!\Theta \wedge \varphi, \label{formula 3} \\
d  \Theta & = & \theta \wedge \Theta - \Theta \wedge \theta.
\end{eqnarray*}
The entries of $\Theta$ are all $(1,1)$ forms, while the entries of the column vector $\tau $ are all $(2,0)$ forms, under the frame $e$.
Similar symbols such as $\theta^b,\Theta^b$ and $\tau^b$ are applied to Bismut connection.
The components of $\tau$ are exactly $T_{ij}^k$ defined in (\ref{eq:2.1}):
\[ \tau_k =\frac{1}{2}  \sum_{i,j=1}^n T_{ij}^k \varphi_i\wedge \varphi_j \ = \sum_{1\leq i<j\leq n}  \ T_{ij}^k \varphi_i\wedge \varphi_j.\]
Under the frame $e$, express the Levi-Civita connection $\nabla$ as
$$ \nabla e = \theta_1 e + \overline{\theta_2 }\overline{e} ,
\ \ \ \nabla \overline{e} = \theta_2 e + \overline{\theta_1
}\overline{e} ,$$
thus the matrices of connection and curvature for $\nabla $ become:
$$ \hat{\theta } = \left[ \begin{array}{ll} \theta_1 & \overline{\theta_2 } \\ \theta_2 & \overline{\theta_1 }  \end{array} \right] , \ \  \  \hat{\Theta } = \left[ \begin{array}{ll} \Theta_1 & \overline{\Theta}_2  \\ \Theta_2 & \overline{\Theta}_1   \end{array} \right], $$
 where
\begin{eqnarray*}
\Theta_1 & = & d\theta_1 -\theta_1 \wedge \theta_1 -\overline{\theta_2} \wedge \theta_2, \\
\Theta_2 & = & d\theta_2 - \theta_2 \wedge \theta_1 - \overline{\theta_1 } \wedge \theta_2,  \label{formula 7}\\
d\varphi & = & - \ ^t\! \theta_1 \wedge \varphi - \ ^t\! \theta_2
\wedge \overline{\varphi } .
\end{eqnarray*}
When $e$ is unitary, both $\theta_2 $ and $\Theta_2$ are skew-symmetric, while $\theta$, $\theta_1$, $\theta^b$, or $\Theta$, $\Theta_1$, $\Theta^b$ are all skew-Hermitian. Consider the $(2,1)$ tensor $\gamma = (\nabla^b -\nabla^c)$ introduced in \cite{YZ18Cur}. Its representation under the frame $e$ is a matrix of $1$-forms, which by abuse of notation we will also denote by $\gamma$. Then it holds from \cite[Lemma 2]{WYZ} that
\[ \frac{1}{2}\gamma = \theta_1 - \theta .\]
Denote the decomposition of $\gamma$ into $(1,0)$ and $(0,1)$ parts by $\gamma = \gamma ' + \gamma ''$.
As observed in \cite{YZ18Cur}, when $e$ is unitary, $\gamma $ and $\theta_2$ take the following simple forms
\begin{equation}
(\theta_2)_{ij} = \frac{1}{2}\sum_{k=1}^n \overline{T^k_{ij}} \varphi_k, \ \ \ \ \gamma_{ij} = \sum_{k=1}^n ( T_{ik}^j \varphi_k - \overline{T^i_{jk}} \overline{\varphi}_k ), \label{eq:2.6}
\end{equation}
while for general frames the above formula will have the matrix $(g_{i\overline{j}})=(\langle e_i,\overline{e}_j \rangle )$ and its inverse involved, which is less convenient. This is why we often choose unitary frames to work with. By (\ref{eq:2.6}), we get the expression of the components of the torsion $T^b$ under any unitary frame $e$:
\begin{equation}
T^b(e_i,e_j) = - \sum_{k=1}^n T^k_{ij}e_k, \ \ \ T^b(e_i, \overline{e}_j) = \sum_{k=1}^n \big( T^j_{ik}\overline{e}_k - \overline{T^i_{jk}} e_k \big) . \label{eq:2.7}
\end{equation}
From this, we deduce that
\begin{equation}
\nabla^bT^b=0 \ \ \Longleftrightarrow \ \ \nabla^bT^c =0. \ \label{eq:2.8}
\end{equation}
This is observed  in \cite[the proof of Proposition 1]{ZhaoZ19Str}. More generally, if we denote by $\nabla^t = (1-t)\nabla^c + t \nabla^b$ the $t$-Gauduchon connection, where $t$ is any real constant, then its torsion tensor is $T^t(x,y)=T^c(x,y)+ t(\gamma_xy -\gamma_yx)$, whose components under $e$ again can be expressed in terms of  $T^k_{ij}$, so it is easy to see that for any fixed $t\in {\mathbb R}$,
\begin{equation}
\nabla^tT^{t_1}=0 \ \ \Longleftrightarrow  \ \ \nabla^tT^{t_2} =0, \ \ \ \ \forall \  t_1, t_2 \in \mathbb{R}. \ \label{eq:2.9}
\end{equation}
That is, for any fixed $t$, if the torsion of one Gauduchon connection is parallel with respect to $\nabla^t$, then the torsion of any other Gauduchon connection will also be parallel under $\nabla^t$.

Next let us discuss the curvature. As usual, the curvature tensor $R^D$ of a linear connection $D$ on a Riemannian manifold $M$ is defined  by
$$ R^D(x,y,z,w) = \langle R^D_{xy}z, \ w \rangle = \langle D_xD_yz-D_yD_xz - D_{[x,y]}z, \ w \rangle, $$
where $x,y,z,w$ are tangent vectors in $M$. We will also write it as $R^D_{xyzw}$ for brevity. It is always skew-symmetric with respect to the first two positions, and it will be skew-symmetric with respect to its last two positions when the connection is {\em metric,}  namely, when $Dg=0$.  The first and second Bianchi identities are respectively
\begin{eqnarray*}
&& {\mathfrak S}\{ R^D_{xy}z - (D_xT^D)(y,z) - T^D(T^D(x,y),z)  \} = 0,   \label{eq:B1} \\
&& {\mathfrak S}\{ (D_xR^D)_{yz} + R^D_{T^D(x,y)\, z} \} = 0,  \label{eq:B2}
\end{eqnarray*}
where ${\mathfrak S}$ means the sum over all cyclic permutation of $x,y,z$. When $M$ is a complex manifold and the connection $D$ is {\em Hermitian,} namely, satisfying $Dg=0$ and $DJ=0$ where $J$ is the almost complex structure, then $R^D$ satisfies
$$ R^D(x,y,Jz,Jw) = R^D(x,y,z,w), $$
for any tangent vectors $x,y,z,w$, or equivalently,
\begin{equation}
 R^D(x,y,Z,W) = R^D(x,y,\overline{Z},\overline{W}) =0 , \label{eq:2.10}
 \end{equation}
for any type $(1,0)$ tangent vectors $Z$, $W$. From now on, we will use $X$, $Y$, $Z$, $W$ to denote type $(1,0)$ vectors. For any Hermitian connection $D$, with the skew-symmetry of first two or last two positions in mind, the above equation (\ref{eq:2.10}) says that the only possibly non-zero components of $R^D$ are $R^D_{XYZ\overline{W}}$, $R^D_{X\overline{Y}Z\overline{W}}$ and their conjugations.

Let us now apply the Bianchi identities to the Chern connection $\nabla^c$, and use the fact that $T^c(e_i, \overline{e}_j)=0$ and $R^c_{e_ie_j}x =0$ for any $x$ where $e$ is any local unitary frame. We have the following
\begin{eqnarray}
&&  T^{\ell}_{ij;k} + T^{\ell}_{jk;i}+ T^{\ell}_{ki;j}\, = \,  \sum_{r=1}^n \big( T^r_{ij}T^{\ell}_{kr} + T^r_{jk}T^{\ell}_{ir}+ T^r_{ki}T^{\ell}_{jr}\big)  \label{eq:B1a} \\
&&  R^c_{k\overline{j}i\overline{\ell}} -  R^c_{i\overline{j}k\overline{\ell}} \, = \,  T^{\ell}_{ik;\overline{j}} \label{eq:B1b} \\
&& R^c_{i\overline{j}k\overline{\ell};\, m} - R^c_{m\overline{j}k\overline{\ell}; \,i} \,= \, \sum_{r=1}^n T^{r}_{im} R^c_{r\overline{j}k\overline{\ell}}  \label{eq:B2}
\end{eqnarray}
where the indices after semicolon stand for covariant derivatives with respect to the Chern connection $\nabla^c$. Note that in the formula (\ref{eq:B1b}) we assumed that the frame $e$ is unitary, otherwise the right hand side needs to be multiplied on the right by $g$, the matrix of the metric.



Since we will be primarily interested in the Bismut connection $\nabla^b$, we would like to have formulae involving the curvature and covariant differentiation of $\nabla^b$. Using the Bianchi identities for general metric connections, we could get formulae for $\nabla^b$ which are similar to the ones for Chern connection mentioned above. But since we also need relations between $R^c$ and $R^b$ later, let us deduce them from the structure equations. Under a $(1,0)$-frame $e$, the components of the Chern, Bismut and Riemannian curvature tensors are given by
\begin{equation*}
R^c_{i\overline{j}k\overline{\ell}}  =  \sum_{p=1}^n \Theta_{kp}(e_i,
\overline{e}_j)g_{p\overline{\ell}}, \ \ \  \ \ R^b_{abk\overline{\ell}}  =  \sum_{p=1}^n \Theta^b_{kp}(e_a,
e_b)g_{p\overline{\ell}}, \ \ \ \ \ R_{abcd} = \sum_{f=1}^{2n} \hat{\Theta}_{cf}(e_a,e_b)g_{fd},
\end{equation*}
where $i,j,k,\ell,p$ range from $1$ to $n$, while $a,b,c,d,f$ range from $1$ to $2n$ with
$e_{n+i}=\overline{e}_i$, and $g_{ab}=g(e_a,e_b)$. It follows from our previous discussion that $R^D_{abij} = R^D_{ab\bar{i}\bar{j}} =0$ for any Hermitian connection $D$.

For convenience, we will assume from now on that our local $(1,0)$-frame $e$ is unitary for the Hermitian metric $g$. Then we have

\begin{lemma} \label{lemma3.1}
Let $(M^n,g)$ be a Hermitian manifold. Then under any local unitary frame the following identities hold:
\begin{eqnarray}
&& R^{b}_{ijk\bar{\ell}} \ = \
\big( T^{\ell}_{kj,i}-T^{\ell}_{ki,j} \big) +\sum_r \big( T^r_{ij}T^{\ell}_{rk} + T^r_{jk}T^{\ell}_{ri} + T^r_{ki}T^{\ell}_{rj} \big) ,       \label{curv20}\\
 && R^b_{i\bar{j}k\bar{\ell}}-R^c_{i\bar{j}k\bar{\ell}} \ = \ \big( T^{\ell}_{ik,\bar{j}} + \overline{T^{k}_{j\ell,\bar{i}}} \big)  -  \sum_r  \big( T^{\ell}_{kr}\overline{T^{i}_{jr}} + T^{j}_{ir}\overline{T^k_{\ell r}}
+ T^r_{ik}\overline{T^r_{j\ell}} - T^{\ell}_{ir}\overline{T^{k}_{jr}} \big),     \label{curv11}\\
&& T^k_{ij,\ell} + T^k_{j\ell,i} + T^k_{\ell i,j} \ = \ 2\sum_r  \big( T^r_{\ell i}T^k_{r j} + T^r_{j\ell}T^k_{ri} + T^r_{ij}T^k_{r \ell} \big) ,      \label{pT} \\
&& T^j_{ik,\bar{\ell}} + \overline{T^i_{j\ell,\bar{k}}} - \overline{T^k_{j\ell,\bar{i}}} \ \, = \ \,   -\, \sum_r \big( T^r_{ik}\overline{T^r_{j\ell}} + T^j_{ir} \overline{T^k_{\ell r}} - T^j_{kr}\overline{T^i_{\ell r}}
-T^\ell_{ir} \overline{T^k_{jr}} + T^\ell_{kr}\overline{T^i_{jr}} \big)       \label{dbT} \\
&& \hspace{4cm} + \ \,\big( R^{b}_{i\bar{\ell}k\bar{j}}-R^b_{k\bar{\ell}i\bar{j}} \big), \notag \\
&& R^{b}_{ijk\bar{\ell},p} + R^{b}_{pik\bar{\ell},j} +R^{b}_{jpk\bar{\ell},i} \ = \
 -\,\sum_r \big( R^b_{irk\bar{\ell}}T_{jp}^r + R^b_{jrk\bar{\ell}}T_{pi}^r + R^b_{prk\bar{\ell}}T_{ij}^r \big) , \label{dR_30}  \\
&& R^{b}_{ipk \bar{\ell},\bar{q}} - R^{b}_{i\bar{q}k \bar{\ell}, p} + R^{b}_{p \bar{q} k \bar{\ell}, i} \  = \ \sum_r \{  R^b_{r\bar{q}k\bar{\ell}}T^r_{ip}  - R^b_{p\bar{r}k\bar{\ell}}T^q_{ir}   + R^b_{i\bar{r}k\bar{\ell}}T^q_{pr} +
\label{dR_21} \\
&& \hspace{4.8cm} +  \ R^b_{prk\bar{\ell}} \overline{T^i_{qr}}  - R^b_{irk\bar{\ell}}\overline{T^p_{qr}}\} ,\notag \end{eqnarray}
for any $i,j,k,\ell,p,q$, where indices after comma stand for covariant derivatives with respect to $\nabla^{b}$.
\end{lemma}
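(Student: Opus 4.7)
The plan is to work from the structure equations and Bianchi identities of the Bismut connection, using the decomposition $\theta^b = \theta + \gamma$ with $\gamma$ given explicitly by \eqref{eq:2.6}.

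First I would derive \eqref{curv20} and \eqref{curv11} from the identity $\Theta^b = d\theta^b - \theta^b\wedge\theta^b$. Substituting $\theta^b = \theta + \gamma$ and the Chern structure equation $d\theta = \theta\wedge\theta + \Theta$ yields
\[
\Theta^b = \Theta + \bigl(d\gamma - \theta\wedge\gamma - \gamma\wedge\theta\bigr) - \gamma\wedge\gamma.
\]
Since $\Theta$ is of pure type $(1,1)$, the $(2,0)$ component of the right hand side, evaluated on $(e_i, e_j)$ and with $\gamma$ expanded via \eqref{eq:2.6} and the first structure equation $d\varphi = -{}^t\theta\wedge\varphi + \tau$ used to reduce the $d\gamma$ contributions, gives \eqref{curv20}. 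The $(1,1)$ component gives \eqref{curv11}: the $\Theta$ piece contributes $R^c_{i\bar j k\bar\ell}$, while the $d\gamma$ and $\gamma\wedge\gamma$ pieces unfold into the $\bar\partial$- and $\partial$-type covariant derivatives of $T$ together with the quadratic torsion products on the right hand side.

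Next I would treat \eqref{pT} via the general first Bianchi identity
\[
\mathfrak{S}_{(x,y,z)}\bigl\{R^b_{xy}z - (\nabla^b_x T^b)(y,z) - T^b(T^b(x,y),z)\bigr\}=0
\]
applied to $(x,y,z) = (e_i, e_j, e_\ell)$ and paired with $\bar e_k$. Using $T^b(e_i,e_j) = -\sum_r T^r_{ij}e_r$ from \eqref{eq:2.7} produces a cyclic sum in $(i,j,\ell)$ involving $R^b_{ij\ell\bar k}$ together with Bismut-covariant derivatives and quadratic terms of $T$; substituting the previously established \eqref{curv20} on the curvature side and combining the cyclic sums yields the asymmetry factor $2$ on the right of \eqref{pT}. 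Formula \eqref{dbT} follows from the same first Bianchi identity applied to $(e_i, e_k, \bar e_\ell)$ and paired with $\bar e_j$: the Hermitian property makes $R^b(e_i, e_k, \bar e_\ell, \bar e_j)$ vanish, leaving exactly $R^b_{k\bar\ell i\bar j} - R^b_{i\bar\ell k\bar j}$ as the curvature contribution, while the mixed Bismut torsion $T^b(e_a, \bar e_b)$ from \eqref{eq:2.7} generates the five quadratic torsion products on the right hand side.

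For \eqref{dR_30} and \eqref{dR_21} I would apply the second Bianchi identity
\[
\mathfrak{S}_{(x,y,z)}\bigl\{(\nabla^b_x R^b)_{yz} + R^b_{T^b(x,y)\,z}\bigr\} = 0
\]
with the last two slots of $R^b$ fixed at $(e_k, \bar e_\ell)$. The choice $(x,y,z) = (e_i, e_j, e_p)$, together with $T^b(e_a,e_b) = -\sum_r T^r_{ab}e_r$, produces \eqref{dR_30} directly after cyclically reindexing; the choice $(x,y,z) = (e_i, e_p, \bar e_q)$ and the mixed form of $T^b$ in \eqref{eq:2.7} produces the five-term right hand side of \eqref{dR_21} after separating the holomorphic and antiholomorphic pieces of each $T^b(e_a, \bar e_b)$. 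The main technical obstacle is bookkeeping: the Chern and Bismut torsions differ in sign on holomorphic pairs, the factor-of-$2$ convention of \eqref{eq:2.1} propagates through every cyclic sum, and the skew-symmetry of $R^b$ in its first two slots must be combined consistently with the Hermitian property in its last two when isolating the $(2,0)$ and $(1,1)$ pieces of $\Theta^b$ under a general unitary, not necessarily holomorphic, frame in which both $\theta$ and $\gamma$ carry mixed $(1,0)+(0,1)$ type.
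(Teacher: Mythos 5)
Your proposal is correct in outline, and for the first two identities it is essentially the paper's argument: the paper also starts from $\Theta^b=d\theta^b-\theta^b\wedge\theta^b$ with $\theta^b=\theta+\gamma$, except that it first normalizes the unitary frame so that $\theta^b|_x=0$ (hence $\theta=-\gamma$ at $x$), which collapses your three correction terms to $d\gamma+\gamma\wedge\gamma$ and makes the ordinary derivatives of the components of $\gamma$ coincide with the $\nabla^b$-covariant derivatives appearing in \eqref{curv20} and \eqref{curv11}. Without that normalization you must separately account for the discrepancy between Chern- and Bismut-covariant derivatives of $T$, so I recommend adopting it. For the remaining four identities your route genuinely differs from the paper's. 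You invoke the intrinsic first and second Bianchi identities of $\nabla^b$ with its full torsion $T^b$ (including the mixed components $T^b(e_i,\bar e_j)$ from \eqref{eq:2.7}), whereas the paper stays in the exterior-calculus picture: it gets \eqref{pT} and \eqref{dbT} from the type decomposition of the Chern identity $d\tau=-\,^t\theta\wedge\tau+\,^t\Theta\wedge\varphi$ at the normalized point, and \eqref{dR_30}, \eqref{dR_21} from $d\Theta^b=0$ there. I checked that your version closes: pairing the first Bianchi identity on $(e_i,e_j,e_\ell)$ with $\bar e_k$ and eliminating the curvature cyclic sum via \eqref{curv20} produces the factor $2$ in \eqref{pT} (note this makes \eqref{pT} logically dependent on \eqref{curv20}, which is fine since you prove it first); pairing on $(e_i,e_k,\bar e_\ell)$ with $\bar e_j$ kills $R^b(e_i,e_k,\bar e_\ell,\bar e_j)$ by the Hermitian property and reproduces exactly the five-term $P^{j\ell}_{ik}$ and the curvature difference in \eqref{dbT}; and the two slot choices in the second Bianchi identity give \eqref{dR_30} and \eqref{dR_21} with the stated signs. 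Your approach buys a frame-independent, tensorial derivation at the cost of heavier bookkeeping of the mixed torsion; the paper's buys shorter computations at the cost of working at a single normalized point. Either is acceptable.
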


\begin{proof}
For $\forall\ x\in M$, we will choose our local unitary frame $e$ near $x$ such that $\theta^b\big|_{x}=0$, thus $\theta^c=-\gamma$ at $x$.
From the structure equation $\, \Theta^b = d \theta^b - \theta^b \wedge \theta^b$,  it yields that
\[\Theta^b = \Theta +  (d \gamma +  \gamma \wedge \gamma) \]
at $x$. Taking the $(2,0)$ and $(1,1)$ parts of the above, we get respectively
\begin{eqnarray}
\Theta^b_{2,0}&=& (\partial \gamma' +  \gamma' \wedge \gamma'), \notag\\
\Theta^b_{1,1}&=& \Theta + (\overline{\partial} \gamma' - \partial\overline{^t\!\gamma'}
-  \gamma' \wedge \overline{^t\!\gamma'} - \overline{^t\!\gamma'} \wedge \gamma'),\notag
\end{eqnarray}
hence \eqref{curv20} and \eqref{curv11} are followed. From the first Bianchi identity $\, d \tau = - ^t\!\theta \wedge \tau + ^t\!\!\Theta \wedge \varphi$,
it yields at $x$ that
\begin{eqnarray}
\partial \tau&=&  ^t\!\gamma' \wedge \tau, \notag\\
\overline{\partial} \tau&=& -\overline{\gamma'} \wedge \tau + \,^t\!\Theta \wedge \varphi,\notag \\
&=& -\overline{\gamma'} \wedge \tau + \,^t\!\Theta^b_{1,1} \wedge \varphi
- \,^t\!\big(\overline{\partial} \gamma' - \partial\overline{^t\!\gamma'}
-  \gamma' \wedge \overline{^t\!\gamma'} - \overline{^t\!\gamma'} \wedge \gamma'\big) \wedge \varphi, \notag
\end{eqnarray}
which imply \eqref{pT} and \eqref{dbT}. By the second Bianchi identity, $d \Theta^b = \theta^b \wedge \Theta^b - \Theta^b \wedge \theta^b$,
we have $d \Theta^b=0$ at $x$ since  $\theta^b\big|_{x}=0$. Therefore
\[\partial \Theta^b_{2,0}=0,\quad \overline{\partial} \Theta^b_{2,0}+ \partial \Theta^{b}_{1,1}=0, \]
which lead to \eqref{dR_30} and \eqref{dR_21}.
\end{proof}

\begin{lemma} \label{lemma3.2}
Let $(M,g)$ be a Hermitian manifold. Then under any local unitary frame we have
\begin{eqnarray}
T^{\ell}_{ij,k} &= & -(R^b_{jki \bar{\ell}} + R^b_{kij \bar{\ell}}), \label{pT_refined}\\
\sum_r(T^r_{ij}T^{\ell}_{rk} + T^r_{jk}T^{\ell}_{ri} + T^r_{ki}T^{\ell}_{rj}) &=&
-(R^b_{ijk \bar{\ell}} + R^b_{jki \bar{\ell}} + R^b_{kij \bar{\ell}}), \label{permutation}\\
T^j_{ik,\bar{\ell}} & = & -\frac{1}{3} \sum_r(T^r_{ik}\overline{T^r_{j\ell}} + T^j_{ir} \overline{T^k_{\ell r}}
- T^j_{kr}\overline{T^i_{\ell r}} -T^\ell_{ir} \overline{T^k_{jr}} + T^\ell_{kr}\overline{T^i_{jr}}) \label{dbT_refined}\\
&&+ \frac{2}{3} (R^b_{i \bar{\ell} k \bar{j}} - R^b_{k \bar{\ell} i \bar{j}})
+ \frac{1}{3} (R^b_{i \bar{j} k \bar{\ell}} - R^b_{k \bar{j} i \bar{\ell}}), \notag\\
\eta_{i,j} & = & - \sum_r (R^b_{ijr\bar{r}} + R^b_{jri\bar{r}}), \label{peta}\\
\eta_{i,\bar{j}} & = & -\frac{1}{3}(\sum_{r}\eta_r \overline{T^i_{jr}} + \overline{\eta}_r T^j_{ir} - \sum_{t,r}T^j_{tr}\overline{T^i_{tr}}) \label{dbeta} \\
& & + \frac{2}{3}\sum_{r}(R^b_{r\bar{j}i\bar{r}} - R^b_{i\bar{j}r\bar{r}})
+ \frac{1}{3}\sum_r(R^b_{r\bar{r}i\bar{j}} - R^b_{i\bar{r}r\bar{j}}) \notag.
\end{eqnarray}
for any $i,j,k,\ell$, where indices after comma stand for the covariant derivative with respect to $\nabla^{b}$.
\end{lemma}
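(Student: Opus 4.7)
My plan is to derive \eqref{pT_refined}--\eqref{dbeta} as algebraic consequences of the Bianchi-type identities \eqref{curv20}, \eqref{pT}, and \eqref{dbT} of Lemma~\ref{lemma3.1}, used throughout with the antisymmetry $T^\ell_{ab,c}=-T^\ell_{ba,c}$ and the Hermitian conjugation rule $\overline{R^b_{a\bar b c\bar d}}=R^b_{b\bar a d\bar c}$. For \eqref{pT_refined} and \eqref{permutation}: let $S_{ijk}$ be the cyclically symmetric cubic $T$-sum on the right of \eqref{curv20}. Summing \eqref{curv20} over the three cyclic permutations of $(i,j,k)$, the cubic part contributes $3S_{ijk}$, while the derivative part collapses (by antisymmetry) to $-2(T^\ell_{ij,k}+T^\ell_{jk,i}+T^\ell_{ki,j})$, which by \eqref{pT} equals $-4S_{ijk}$. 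This gives \eqref{permutation} at once, and feeding it back into \eqref{curv20} together with \eqref{pT} produces a $2\times 2$ linear system whose solution is \eqref{pT_refined}.

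The technical core is \eqref{dbT_refined}. Writing \eqref{dbT} schematically as
\[
U+\bar V-\overline{V'}=-P_1+Q_1,\qquad U=T^j_{ik,\bar\ell},\ V=T^i_{j\ell,\bar k},\ V'=T^k_{j\ell,\bar i},\ Q_1=R^b_{i\bar\ell k\bar j}-R^b_{k\bar\ell i\bar j},
\]
I apply \eqref{dbT} under three further index substitutions: (a) the swap $j\leftrightarrow\ell$, which yields the same type of equation with $P_2=-P_1$ and $Q_2=R^b_{i\bar j k\bar\ell}-R^b_{k\bar j i\bar\ell}$; (b) the pair-swap $(i,j,k,\ell)\to(j,i,\ell,k)$, whose complex conjugate (after verifying $\overline{P_3}=P_1$) relates $\bar V$, $U$, and $U':=T^\ell_{ik,\bar j}$; and (c) the cyclic shift $(i,j,k,\ell)\to(j,k,\ell,i)$, whose conjugate ($\overline{P_4}=-P_1$) relates $\overline{V'}$, $U$, and $U'$. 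Adding the original equation to (a) gives $U+U'=Q_1+Q_2$; together with (b) and (c) this closes a $4\times 4$ linear system in $\{U,U',\bar V,\overline{V'}\}$. Solving for $U$, the Chern torsion contribution collapses to $-\tfrac13 P_1$, while the curvature part simplifies through the key identity $\overline{Q_3}-\overline{Q_4}=Q_1-Q_2$ (a direct consequence of $\overline{R^b_{a\bar b c\bar d}}=R^b_{b\bar a d\bar c}$) to $\tfrac23 Q_1+\tfrac13 Q_2$, yielding \eqref{dbT_refined}.

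Finally, \eqref{peta} and \eqref{dbeta} follow by tracing. Since $\eta_{i,j}=\sum_k T^k_{ki,j}$, applying \eqref{pT_refined} to each summand and simplifying via the skew-symmetry of $R^b$ in its first two slots yields \eqref{peta}. Likewise, $\eta_{i,\bar j}=-\sum_k T^k_{ik,\bar j}$, and applying \eqref{dbT_refined} to each summand, the two diagonal contractions $\sum_{k,r}T^r_{ik}\overline{T^r_{kj}}$ and $\sum_{k,r}T^k_{ir}\overline{T^k_{jr}}$ cancel by antisymmetry of $T$, leaving exactly the terms $\sum_r\eta_r\overline{T^i_{jr}}$, $\sum_r\overline{\eta}_rT^j_{ir}$, and $B_{i\bar j}=\sum_{t,r}T^j_{tr}\overline{T^i_{tr}}$ appearing in \eqref{dbeta}, while the curvature sums reindex to the stated form. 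The main obstacle is the middle step: selecting three permutations of \eqref{dbT} that close the linear system without introducing unknowns outside $\{U,U',V,V'\}$, verifying the combinatorial identities $P_2=-P_1$, $\overline{P_3}=P_1$, $\overline{P_4}=-P_1$ by term-by-term comparison of five-term sums, and recognizing the curvature collapse $\overline{Q_3}-\overline{Q_4}=Q_1-Q_2$; the $4\times 4$ linear algebra is then mechanical.
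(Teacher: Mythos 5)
Your proposal is correct and follows essentially the same route as the paper, which derives all five identities by symmetrizing/permuting the Bianchi-type formulae \eqref{curv20}, \eqref{pT} and \eqref{dbT} of Lemma \ref{lemma3.1} and then tracing over an index to obtain \eqref{peta} and \eqref{dbeta}; your permutation bookkeeping (the relations $P_2=-P_1$, $\overline{P_3}=P_1$, $\overline{P_4}=-P_1$ and the collapse $\overline{Q_3}-\overline{Q_4}=Q_1-Q_2$) checks out and reproduces the stated coefficients $-\tfrac13$, $\tfrac23$, $\tfrac13$. The paper states this derivation in one line without details, so your outline is simply a fleshed-out version of the same argument.
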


\begin{proof}
Equations \eqref{curv20} and \eqref{pT} imply  \eqref{pT_refined} and \eqref{permutation}, and equation \eqref{dbT} implies \eqref{dbT_refined}. Also, since $\eta_k=\sum_i T^i_{ik}$, \eqref{peta} and \eqref{dbeta} are established from \eqref{pT_refined} and \eqref{dbT_refined}, respectively.
\end{proof}

\begin{definition}[See the proof of Lemma 6 and Section 4 in \cite{ZhaoZ19Str}]\label{def3.3}
Let us introduce the following notations
\[P_{ik}^{j \ell}:=  \sum_r T^r_{ik}\overline{T^r_{j\ell}} + T^j_{ir} \overline{T^k_{\ell r}}
- T^j_{kr}\overline{T^i_{\ell r}} -T^\ell_{ir} \overline{T^k_{jr}} + T^\ell_{kr}\overline{T^i_{jr}}. \ \]
\[ A_{k\overline{\ell }} := \sum_{r,s} T^r_{sk} \overline{ T^r_{s\ell } } , \quad   \quad \ \  B_{k\overline{\ell }} := \sum_{r,s} T^{\ell }_{rs} \overline{ T^k_{rs } } ,  \quad  \quad  \quad  \quad \quad  \  \]
\[C_{ik} := \sum_{r,s} T^r_{si} T^s_{rk},  \quad  \quad  \quad  \phi^{\ell }_k := \sum_r \overline{\eta}_r T^{\ell }_{kr}, \quad  \quad  \quad  \quad  \quad  \quad    \]
\[Q_{i\bar{j}k\bar{\ell}} := R^b_{i\bar{j}k\bar{\ell}} - R^b_{k \bar{j} i \bar{\ell}},
\quad \mathrm{Ric}(Q)_{i\bar{j}} := \sum_r R^b_{i\bar{j}r\bar{r}} - R^b_{r \bar{j} i \bar{r}}\]
under any unitary frame. It is clear that $C$ is symmetric, $A$, $B$ are Hermitian symmetric, and $P_{ik}^{j \ell}$ satisfies
\[ P^{j\ell}_{\,ik} = - P^{j\ell }_{\,ki} = - P^{\ell j}_{\,ik} = \overline{ P^{ik}_{j\ell } }.\]
\end{definition}

\begin{proposition}\label{plcld}
Let $(M^n,g)$ be a Hermitian manifold and denote by $\omega$ the K\"ahler form of $g$. Then
\[\sqrt{-1}\partial \overline{\partial} \omega = \frac{1}{4} \sum_{i,j,k,\ell} \left\{ (T^{\ell}_{ik,\overline{j}} - T^j_{ik,\overline{\ell}})-P^{j\ell}_{ik} \right\} \varphi_i \wedge \varphi_k \wedge \overline{\varphi}_j \wedge \overline{\varphi}_{\ell}.\]
Hence we have the following equivalence
\[\begin{aligned}
\partial \overline{\partial} \omega=0 \quad & \Longleftrightarrow \quad T^{j}_{ik,\overline{\ell}} - T^{\ell}_{ik,\overline{j}}=-P^{j\ell}_{ik}\\
& \Longleftrightarrow \quad (R^b_{i \overline{\ell} k \overline{j}}-R^b_{k \overline{\ell} i \overline{j}})
- (R^b_{i \overline{j} k \ell} - R^b_{k \overline{j} i \overline{\ell}}) = - P^{j \ell}_{ik}.
\end{aligned}\]
\end{proposition}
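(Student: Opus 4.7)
The plan is to compute $\partial\bar\partial\omega$ pointwise by working in a Bismut-normal unitary frame and reducing everything to Chern torsion components. As a first step, using $\nabla^c\omega=0$ together with the vanishing $T^c(X,\bar Y)=0$ for type $(1,0)$ vectors, the standard exterior derivative formula at once yields $d\omega(e_i,e_k,\bar e_j) = \omega(T^c(e_i,e_k), \bar e_j) = \sqrt{-1}\,T^j_{ik}$, so that $\partial\omega = \tfrac{\sqrt{-1}}{2}\sum_{i,j,k} T^j_{ik}\,\varphi_i\wedge\varphi_k\wedge\bar\varphi_j$.

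Next, at a fixed point $x\in M$ I would choose a unitary frame $e$ so that $\theta^b|_x=0$, exactly as in the proof of Lemma \ref{lemma3.1}. At $x$ the Bismut structure equation reduces to $d\varphi=\tau^b$, and combined with the explicit form (\ref{eq:2.7}) of $T^b$ this gives
\[
\bar\partial\varphi_i\big|_x = -\sum_{a,b}\overline{T^a_{bi}}\,\varphi_a\wedge\bar\varphi_b,\qquad \bar\partial\bar\varphi_j\big|_x = -\tfrac12\sum_{a,b}\overline{T^j_{ab}}\,\bar\varphi_a\wedge\bar\varphi_b,
\]
while also $\bar\partial T^j_{ik}|_x = \sum_\ell T^j_{ik,\bar\ell}\,\bar\varphi_\ell$, where the comma denotes the Bismut covariant derivative. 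Then applying $\bar\partial$ to the formula for $\partial\omega$ via the Leibniz rule produces four contributions (one from $\bar\partial T^j_{ik}$ and one each from $\bar\partial\varphi_i$, $\bar\partial\varphi_k$, $\bar\partial\bar\varphi_j$). Substituting the three identities above, reordering wedges into the canonical form $\varphi_i\wedge\varphi_k\wedge\bar\varphi_j\wedge\bar\varphi_\ell$, relabeling dummy indices, and antisymmetrizing in the pairs $(i,k)$ and $(j,\ell)$ should give the claimed formula after invoking $\partial\bar\partial\omega = -\bar\partial\partial\omega$: the $\bar\partial T^j_{ik}$ contribution yields $(T^\ell_{ik,\bar j}-T^j_{ik,\bar\ell})$; the $\bar\partial\bar\varphi_j$ contribution matches the summand $T^r_{ik}\overline{T^r_{j\ell}}$ of $-P^{j\ell}_{ik}$; and the two $\bar\partial\varphi$ contributions, after antisymmetrization and repeated use of $T^\ell_{rk}=-T^\ell_{kr}$, account for the remaining four summands of $-P^{j\ell}_{ik}$ in Definition \ref{def3.3}.

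For the two stated equivalences, the first is immediate since the coefficient $(T^\ell_{ik,\bar j}-T^j_{ik,\bar\ell})-P^{j\ell}_{ik}$ is already fully antisymmetric in both $(i,k)$ and $(j,\ell)$. For the second equivalence I would substitute \eqref{dbT_refined} of Lemma \ref{lemma3.2}, which expresses $T^j_{ik,\bar\ell}$ as $-\tfrac13 P^{j\ell}_{ik}+\tfrac23(R^b_{i\bar\ell k\bar j}-R^b_{k\bar\ell i\bar j})+\tfrac13(R^b_{i\bar j k\bar\ell}-R^b_{k\bar j i\bar\ell})$. Taking the combination $T^j_{ik,\bar\ell}-T^\ell_{ik,\bar j}$ and using $P^{\ell j}_{ik}=-P^{j\ell}_{ik}$, the $P$-terms collect into $-\tfrac23 P^{j\ell}_{ik}$ while the curvature terms collapse to $\tfrac13[(R^b_{i\bar\ell k\bar j}-R^b_{k\bar\ell i\bar j})-(R^b_{i\bar j k\bar\ell}-R^b_{k\bar j i\bar\ell})]$, so that the first equivalent condition rearranges into the second.

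The main obstacle will be the combinatorial bookkeeping in the central calculation: the $\bar\partial\varphi_i$ and $\bar\partial\varphi_k$ pieces individually produce coefficients that are only partially antisymmetric in the relevant index pairs, and one must antisymmetrize and repeatedly invoke $T^\ell_{rk}=-T^\ell_{kr}$ before the five summands of $P^{j\ell}_{ik}$ assemble in the correct pattern. Tracking the factors of $\sqrt{-1}$, the signs, and the combinatorial $\tfrac14$ prefactor will also require care, but no new structural input beyond Lemma \ref{lemma3.2} and the torsion formula (\ref{eq:2.7}) should be needed.
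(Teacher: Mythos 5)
Your proposal is correct, and it reaches the key identity by a genuinely different route from the paper. The paper does not differentiate $\partial\omega$ directly: it starts from the formula $\sqrt{-1}\,\partial\overline{\partial}\omega = \,^t\!\tau\wedge\overline{\tau} + \,^t\!\varphi\wedge\Theta\wedge\overline{\varphi}$ of \cite[Lemma 1]{ZhaoZ19Str}, which expresses $\sqrt{-1}\,\partial\overline{\partial}\omega$ in terms of the Chern torsion and the \emph{Chern} curvature, and then converts to Bismut data by invoking the Chern first Bianchi identity $T^j_{ik;\overline{\ell}} = R^c_{k\overline{\ell}i\overline{j}} - R^c_{i\overline{\ell}k\overline{j}}$ together with the conversion formula $T^j_{ik;\overline{\ell}} = T^j_{ik,\overline{\ell}} - \sum_q ( T^j_{qk}\overline{T^i_{q\ell}} + T^j_{iq}\overline{T^k_{q\ell}} - T^q_{ik}\overline{T^q_{j\ell}})$ relating the two covariant derivatives. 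Your plan instead applies $\overline{\partial}$ to $\partial\omega = \tfrac{\sqrt{-1}}{2}\sum T^j_{ik}\,\varphi_i\wedge\varphi_k\wedge\overline{\varphi}_j$ in a Bismut-normal frame; I checked that the bookkeeping closes up as you predict: the $\overline{\partial}T^j_{ik}$ term yields $\tfrac14(T^\ell_{ik,\overline{j}}-T^j_{ik,\overline{\ell}})$, the $\overline{\partial}\overline{\varphi}_j$ term yields $-\tfrac14\sum_r T^r_{ik}\overline{T^r_{j\ell}}$, and the two $\overline{\partial}\varphi$ terms, which are already antisymmetric in $(i,k)$, produce after antisymmetrization over $(j,\ell)$ exactly the remaining four summands of $-\tfrac14 P^{j\ell}_{ik}$. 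Your route is more self-contained (no appeal to the external lemma or to the Chern Bianchi identity) at the price of heavier direct combinatorics; the paper's is shorter given its existing toolbox. The handling of the two equivalences — full antisymmetry of the coefficient in both index pairs, then substitution of \eqref{dbT_refined} with $P^{\ell j}_{ik}=-P^{j\ell}_{ik}$ — coincides with the paper's, and your arithmetic there ($-\tfrac23 P$ plus $\tfrac13$ of the curvature combination) is right.
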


\begin{proof}
By \cite[Lemma 1]{ZhaoZ19Str}, we have
\[\begin{aligned}
\sqrt{-1} \partial \overline{\partial} \omega &=  ^t\!\!\tau  \overline{\tau} + \, ^t\!\varphi \Theta \overline{\varphi}\\
&= \frac{1}{4} \sum_{i,j,k,\ell,p}\left(T^p_{ik}\overline{T^p_{j\ell}}
- (R^c_{i\bar{j}k\bar{\ell}} - R^c_{k\bar{j}i\bar{\ell}}- R^c_{i\bar{\ell}k\bar{j}}+R^c_{k\bar{\ell}i\bar{j}})\right)
\varphi_i \wedge \varphi_k \wedge \overline{\varphi}_j \wedge \overline{\varphi}_{\ell}.
\end{aligned}\]
It follows from the structure equation \eqref{eq:B1b} of Chern connection above
\begin{equation}\label{eq:DR}
T^j_{ik;\overline{\ell}}  =  R^c_{k\overline{\ell}i\overline{j}} - R^c_{i\overline{\ell}k\overline{j}}, \quad \forall\ i,j,k,\ell,
\end{equation}
where the index after the semicolon stands for covariant derivative with respect to the Chern connection $\nabla^c$, and it is easy to obtain,
since $\nabla^{c}-\nabla^b = -\gamma$,
\begin{eqnarray}
T^j_{ik ; \overline{\ell}} & = & \overline{e}_{\ell} T^j_{ik} - \sum_q \left(T^j_{qk}\,\langle  \nabla^{c}_{\overline{e}_{\ell} } e_i, \overline{e}_q \rangle
+ T^j_{iq}\,\langle   \nabla^{c}_{\overline{e}_{\ell} } e_k, \overline{e}_q \rangle
+ T^q_{ik}\, \langle  \nabla^{c}_{\overline{e}_{\ell} }  \overline{e}_j, e_q\rangle  \right) \nonumber \\
& = & T^j_{ik , \overline{\ell}} + \sum_q \left( T^j_{qk} \gamma_{iq}(\overline{e}_{\ell}) + T^j_{iq} \gamma_{kq}(\overline{e}_{\ell}) - T^q_{ik} \gamma_{qj}(\overline{e}_{\ell}) \right) \nonumber \\
& = & T^j_{ik , \overline{\ell}} - \sum_q \left( T^j_{qk} \overline{T^i_{q\ell }}  + T^j_{iq} \overline{T^k_{q\ell }} - T^q_{ik} \overline{T^q_{j\ell }} \right)\label{eq:deltaTbar},
\end{eqnarray}
where the index after the comma means covariant derivative with respect to $\nabla^b$. Then the equalities \eqref{eq:DR} and \eqref{eq:deltaTbar} imply that \[\sqrt{-1}\partial \overline{\partial} \omega = \frac{1}{4} \left\{ (T^{\ell}_{ik,\overline{j}} - T^j_{ik,\overline{\ell}})-P^{j\ell}_{ik} \right\} \varphi_i \wedge \varphi_k \wedge \overline{\varphi}_j \wedge \overline{\varphi}_{\ell}.\]
Therefore the equivalence in the proposition follows from the equality \eqref{dbT_refined}.
\end{proof}

\begin{proposition}\label{Tderivative}
Let $(M^n,g)$ be a Hermitian manifold, with $\omega$ its K\"ahler form.
Then it holds that
\begin{eqnarray}
\nabla^b_{1,0} T^c=0 & \Longleftrightarrow &
R^b_{ijk\bar{\ell}}=0\quad \Longrightarrow \quad \sum_r(T^r_{\ell i}T^k_{r j} + T^r_{j\ell}T^k_{ri} + T^r_{ij}T^k_{r \ell})=0,\label{parallel_10}\\
\nabla^b_{0,1} T^c=0 & \Longleftrightarrow &
R^b_{i \bar{j} k \bar{\ell}} - R^b_{k \bar{j} i \bar{\ell}} = -P_{ik}^{j \ell} \label{parallel_01},
\end{eqnarray}
where $\nabla^b_{1,0} T^c$ and $\nabla^b_{0,1} T^c$ are respectively the $(1,0)$- and $(0,1)$-components of $\nabla^bT^c$.
\end{proposition}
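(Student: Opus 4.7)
Both equivalences should read off directly from the refined Bianchi identities \eqref{pT_refined} and \eqref{dbT_refined} of Lemma \ref{lemma3.2}, combined with the symmetries $P^{j\ell}_{ik}=-P^{\ell j}_{ik}=-P^{j\ell}_{ki}$ recorded in Definition \ref{def3.3}. The strategy in each case is to translate the vanishing of a covariant derivative of $T^c$ into a relation on $R^b$ via Lemma \ref{lemma3.2}, and then extract the stated equivalent form by elementary index manipulation.

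For \eqref{parallel_10}, the formula \eqref{pT_refined} reads $T^{\ell}_{ij,k}=-(R^b_{jki\bar\ell}+R^b_{kij\bar\ell})$, so $\nabla^b_{1,0}T^c=0$ is at once equivalent to $R^b_{jki\bar\ell}+R^b_{kij\bar\ell}=0$ for all $i,j,k,\ell$. The nontrivial step is to upgrade this two-term vanishing to the single-term $R^b_{ijk\bar\ell}=0$. Fixing $\ell$ and setting $A_{ijk}:=R^b_{ijk\bar\ell}$, I would apply the hypothesis under the three cyclic relabelings of $(i,j,k)$ to obtain $A_{jki}+A_{kij}=0$, $A_{ijk}+A_{jki}=0$, and $A_{kij}+A_{ijk}=0$; pairwise subtraction gives $A_{jki}=A_{kij}$, and then the first relation forces $2A_{kij}=0$, hence $A_{ijk}\equiv 0$. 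The reverse direction is immediate from \eqref{pT_refined}, and the final implication on the triple-product sum follows by substituting $R^b_{ijk\bar\ell}=0$ into \eqref{permutation} after the relabeling $(i,j,k,\ell)\mapsto(\ell,i,j,k)$, which converts the left-hand side of \eqref{permutation} into the stated expression $\sum_r(T^r_{\ell i}T^k_{rj}+T^r_{j\ell}T^k_{ri}+T^r_{ij}T^k_{r\ell})$.

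For \eqref{parallel_01}, I would rewrite \eqref{dbT_refined} compactly as $3\,T^j_{ik,\bar\ell}=-P^{j\ell}_{ik}+2Q_{i\bar\ell k\bar j}+Q_{i\bar j k\bar\ell}$, with $Q_{i\bar j k\bar\ell}:=R^b_{i\bar j k\bar\ell}-R^b_{k\bar j i\bar\ell}$. For the ``if'' direction, substituting $Q_{i\bar j k\bar\ell}=-P^{j\ell}_{ik}$ and then applying the same identity with $j$ and $\ell$ swapped gives, via $P^{\ell j}_{ik}=-P^{j\ell}_{ik}$, that $Q_{i\bar\ell k\bar j}=P^{j\ell}_{ik}$, so the three terms on the right-hand side collapse to zero. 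For the ``only if'' direction, the identity $P^{j\ell}_{ik}=2Q_{i\bar\ell k\bar j}+Q_{i\bar j k\bar\ell}$ coming from $T^j_{ik,\bar\ell}=0$ and its $j\leftrightarrow\ell$ counterpart $-P^{j\ell}_{ik}=2Q_{i\bar j k\bar\ell}+Q_{i\bar\ell k\bar j}$ form a $2\times 2$ linear system in $(Q_{i\bar j k\bar\ell},Q_{i\bar\ell k\bar j})$; adding the two equations kills the mixed combination and yields $Q_{i\bar j k\bar\ell}+Q_{i\bar\ell k\bar j}=0$, whereupon substitution back produces $Q_{i\bar j k\bar\ell}=-P^{j\ell}_{ik}$ as desired.

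The only step requiring any real care is the cyclic argument in \eqref{parallel_10}, which exploits the fact that the hypothesis $R^b_{jki\bar\ell}+R^b_{kij\bar\ell}=0$ holds for \emph{every} triple $(i,j,k)$ and not merely a single cyclic class; everything else is formal bookkeeping with the identities of Lemma \ref{lemma3.2} and the antisymmetries of $P$ in Definition \ref{def3.3}.
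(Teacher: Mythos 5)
Your proof is correct and follows essentially the same route as the paper: both rest on the identities of Lemma \ref{lemma3.2} together with the antisymmetry $P^{j\ell}_{ik}=-P^{\ell j}_{ik}$ from Definition \ref{def3.3}. The only cosmetic differences are that for \eqref{parallel_10} the paper deduces $R^b_{ijk\bar\ell}=0$ by combining \eqref{pT} with \eqref{curv20} rather than by your cyclic symmetrization of \eqref{pT_refined}, and for the forward direction of \eqref{parallel_01} it reads the conclusion off \eqref{dbT} in one step instead of solving your $2\times2$ system from \eqref{dbT_refined}; these are equivalent manipulations of the same identities.
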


\begin{proof}
It is clear that the condition $\nabla^b_{1,0}T^c=0$ (that is, $T_{ij,k}^\ell=0$) implies $\sum_r(T^r_{\ell i}T^k_{r j} + T^r_{j\ell}T^k_{ri} + T^r_{ij}T^k_{r \ell})=0$ by \eqref{pT}.
Thus $R^b_{ijk\bar{\ell}}=0$ follows from \eqref{curv20}. Conversely, by \eqref{pT_refined} and \eqref{permutation}, $R^b_{ijk\bar{\ell}}=0$ implies that $T_{ij,k}^\ell=0$ and $\sum_r(T^r_{\ell i}T^k_{r j} + T^r_{j\ell}T^k_{ri} + T^r_{ij}T^k_{r \ell})=0$.
Therefore, the conclusion \eqref{parallel_10} is established.

In the meantime, by \eqref{dbT}, we know that the condition $\nabla^b_{0,1}T^c=0$ (that is, $T_{ik,\bar{\ell}}^j=0$) implies that
\[ R^b_{i \bar{\ell} k \bar{j}} - R^b_{k \bar{\ell} i \bar{j}} =P_{ik}^{j\ell}, \]
which yields
\[ R^b_{i \bar{j} k \bar{\ell}} - R^b_{k \bar{j} i \bar{\ell}} =P_{ik}^{\ell j}=-P_{ik}^{j \ell}.\]
Conversely, if $R^b_{i \bar{j} k \bar{\ell}} - R^b_{k \bar{j} i \bar{\ell}} =-P_{ik}^{j \ell}$, then by the fact that $P_{ik}^{j\ell}=-P_{ik}^{\ell j}$ we get
\[R^b_{i \bar{j} k \bar{\ell}} - R^b_{k \bar{j} i \bar{\ell}} = -(R^b_{i \bar{\ell} k \bar{j}} - R^b_{k \bar{\ell} i \bar{j}}),\]
so by \eqref{dbT_refined} we have
\[T_{ik,\bar{\ell}}^j = -\frac{1}{3}P_{ik}^{j \ell} - \frac{1}{3} (R^b_{i \bar{j} k \bar{\ell}} - R^b_{k \bar{j} i \bar{\ell}}) = -\frac{1}{3}P - \frac{1}{3}(-P) = 0.\]
This establishes the equivalence of \eqref{parallel_01}.
\end{proof}

\begin{proposition} \label{prop3.6}
Let $(M^n,g)$ be a BTP manifold. Then it holds that, for any $i,j,k,\ell$,
\begin{enumerate}
\item\label{R20=0} $R^b_{ijk\bar{\ell}}=0$,
\item\label{R11_12=34} $R^b_{i \bar{j} k \bar{\ell}} = R^b_{k \bar{\ell} i \bar{j}}$,
\item\label{Q_pll} $\nabla^b Q=0$,
\item\label{R=0} $R^b_{xy \chi w}=0$,
\end{enumerate}
where $x,y,w$ are any tangent vector and $\chi$ is the dual vector field of Gauduchon's torsion $1$-form $\eta$.
Also, under the BTP condition, we have
\begin{eqnarray}
&& \partial \overline{\partial} \omega =0 \quad \Longleftrightarrow \quad   P_{ik}^{j \ell}=0  \quad \Longleftrightarrow \quad
R^b_{i \bar{j} k \bar{\ell}} = R^b_{k \bar{j} i \bar{\ell}}, \label{eq_plcld}\\
&& \partial \eta=0,\quad \overline{\partial} \eta = -\sum_{i,j,k}\eta_k \overline{T^i_{jk}} \varphi_i \wedge \overline{\varphi}_j,
\end{eqnarray}
In particular, any BTP metric $g$ is always Gauduchon, namely, satisfies $\partial \overline{\partial} \omega^{n-1}=0$.
\end{proposition}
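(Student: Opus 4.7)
The plan is to translate the BTP hypothesis $\nabla^bT^b=0$ into $\nabla^bT^c=0$ via \eqref{eq:2.8}, giving the two tensor equations $T^\ell_{ij,k}=0$ and $T^j_{ik,\overline{\ell}}=0$, and then to read off the various assertions as systematic consequences of these two parallelness conditions. Item $(1)$ is immediate from Proposition \ref{Tderivative}\eqref{parallel_10}, which equates $\nabla^b_{1,0}T^c=0$ with $R^b_{ijk\overline{\ell}}=0$. For item $(4)$, tracing $\nabla^bT^c=0$ yields $\nabla^b\eta=0$ (since $\eta_i=\sum_k T^k_{ki}$ and $\nabla^b$ commutes with contractions); because $\nabla^bg=0$, the metric-dual $\chi$ is then Bismut-parallel, so $R^b(x,y)\chi=0$ for all tangent $x,y$, which yields $R^b_{xy\chi w}=0$.

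The main step is item $(2)$. Proposition \ref{Tderivative}\eqref{parallel_01} provides a single identity, call it (A):
\[ R^b_{i\overline{j}k\overline{\ell}}-R^b_{k\overline{j}i\overline{\ell}}=-P^{j\ell}_{ik}. \]
Swapping $j\leftrightarrow\ell$ in (A) and using the antisymmetry $P^{\ell j}_{ik}=-P^{j\ell}_{ik}$ recorded in Definition \ref{def3.3} produces a companion identity (B): $R^b_{i\overline{\ell}k\overline{j}}-R^b_{k\overline{\ell}i\overline{j}}=P^{j\ell}_{ik}$. The crucial maneuver is now to take the complex conjugate of (A): using $\overline{R^b_{i\overline{j}k\overline{\ell}}}=R^b_{j\overline{i}\ell\overline{k}}$ (since $R^b$ is a real tensor) together with $\overline{P^{j\ell}_{ik}}=P^{ik}_{j\ell}$, and then relabeling the free indices $(j,i,\ell,k)\leftrightarrow(i,j,k,\ell)$, one arrives at $R^b_{i\overline{j}k\overline{\ell}}-R^b_{i\overline{\ell}k\overline{j}}=-P^{j\ell}_{ik}$. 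Adding this to (B), the $P$-terms cancel and give $R^b_{i\overline{j}k\overline{\ell}}=R^b_{k\overline{\ell}i\overline{j}}$, which is item $(2)$. I expect this conjugation-and-relabel step to be the main obstacle: it is not at all transparent a priori that $\nabla^b_{0,1}T^c=0$ together with its complex conjugate should encode the pair-exchange symmetry, yet the two little index gymnastics accomplish exactly this.

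Item $(3)$ then follows immediately from (A), since the formula $Q_{i\overline{j}k\overline{\ell}}=-P^{j\ell}_{ik}$ expresses $Q$ polynomially in the components of $T^c$, and $\nabla^bT^c=0$ forces $\nabla^bP=0$, hence $\nabla^bQ=0$. For the equivalence chain \eqref{eq_plcld}, I feed (A) and (B) into the criterion in Proposition \ref{plcld}: its LHS evaluates to $P^{j\ell}_{ik}-(-P^{j\ell}_{ik})=2P^{j\ell}_{ik}$, so $\partial\overline{\partial}\omega=0\Longleftrightarrow 3P^{j\ell}_{ik}=0\Longleftrightarrow P^{j\ell}_{ik}=0$; and by (A) the last condition is in turn equivalent to $R^b_{i\overline{j}k\overline{\ell}}=R^b_{k\overline{j}i\overline{\ell}}$.

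Since $\eta$ is $\nabla^b$-parallel, the standard identity for parallel $1$-forms reads $d\eta(X,Y)=\eta(T^b(X,Y))$. Using \eqref{eq:2.7} and separating the $(2,0)$ and $(1,1)$ parts then gives $(\partial\eta)(e_i,e_j)=-\sum_k T^k_{ij}\eta_k$ and $(\overline{\partial}\eta)(e_i,\overline{e}_j)=-\sum_k\eta_k\overline{T^i_{jk}}$. The vanishing $\sum_k T^k_{ij}\eta_k=0$ is obtained by setting $\ell=j$ and summing in the BTP consequence $\sum_r(T^r_{ij}T^\ell_{rk}+T^r_{jk}T^\ell_{ri}+T^r_{ki}T^\ell_{rj})=0$ (itself coming from $T^\ell_{ij,k}=0$ via \eqref{pT}): the first two quadratic pieces cancel by the antisymmetry of $T$ in its lower indices, leaving exactly the desired identity. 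Finally, for the Gauduchon conclusion I compute the $\omega$-trace of the $(1,1)$-form $\eta\wedge\overline{\eta}+\overline{\partial}\eta$ appearing in \eqref{eq:1.13}: using $\sum_i\overline{T^i_{ik}}=\overline{\eta}_k$, this trace equals $|\eta|^2-|\eta|^2=0$, so the wedge with $\omega^{n-1}$ vanishes and $\partial\overline{\partial}\omega^{n-1}=0$.
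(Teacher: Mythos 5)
Your proposal is correct and follows essentially the same route as the paper: items (1) and (3) from Proposition \ref{Tderivative}, item (4) from the $\nabla^b$-parallelism of $\eta$ and hence of $\chi$, item (2) from combining the identity $R^b_{i\bar{j}k\bar{\ell}}-R^b_{k\bar{j}i\bar{\ell}}=-P^{j\ell}_{ik}$ with the conjugation and antisymmetry properties of $P$, and the remaining claims from Proposition \ref{plcld} and the quadratic torsion identity giving $\sum_r\eta_rT^r_{ij}=0$. The only cosmetic difference is that you obtain $\partial\eta$ and $\overline{\partial}\eta$ from the invariant formula $d\eta(X,Y)=\eta(T^b(X,Y))$ for a parallel $1$-form rather than the paper's frame computation at a point where $\theta^b=0$; both reduce to the same structure equation.
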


\begin{proof}
It follows from \eqref{eq:2.8} that the BTP condition is equivalent to $\nabla^b T^c=0$, which means $\nabla^b_{1,0} T^c=0$ and $\nabla^b_{0,1} T^c=0$. By \eqref{parallel_10} and \eqref{parallel_01}, we get \eqref{R20=0} and \eqref{Q_pll} immediately, as $P_{ik}^{j\ell}$ is $\nabla^b$-parallel. Since Gauduchon's torsion 1-form $\eta=\sum_k \eta_k \varphi_k = \sum_{i,k}T^i_{ik}\varphi_k$, its dual vector field $\chi=\sum_k \overline{\eta_k} e_k$ is clearly $\nabla^b$-parallel and thus
\[R^b_{xy\chi w} = \langle (\nabla^b_x\nabla^b_y-\nabla^b_y\nabla^b_x-\nabla^b_{[x,y]})\chi,w\rangle =0,\]
which yields \eqref{R=0}. As to the equality \eqref{R11_12=34}, from $R^b_{i \bar{j} k \bar{\ell}} - R^b_{k \bar{j} i \bar{\ell}} = -P_{ik}^{j \ell}$, it yields that
\[\begin{aligned}
R^b_{i \bar{j} k \bar{\ell}} - R^b_{k \bar{\ell} i \bar{j}} &=  R^b_{i \bar{j} k \bar{\ell}} - R^b_{k \bar{j} i \bar{\ell}} + R^b_{k \bar{j} i \bar{\ell}} - R^b_{k \bar{\ell} i \bar{j}} \\
&= -P_{ik}^{j \ell} + (\overline{R^b_{j \bar{k} \ell \bar{i}} - R^b_{\ell \bar{k} j \bar{i}}})\\
&= -P_{ik}^{j \ell} - \overline{P_{j \ell}^{k i}}\\
&= -P_{ik}^{j \ell} -P_{ki}^{j \ell} \\
&=0.
\end{aligned}\]
Under the condition $\nabla^bT^c=0$, the equivalence \eqref{eq_plcld} follows from Proposition \ref{plcld} and \eqref{parallel_01}. For any given point $p\in M$, let $e$ be a local unitary frame near $p$ such that $\theta^b|_p=0$. The structure equation gives us $d\varphi = - \,^t\!\theta \wedge \varphi + \tau =  \,^t\!\gamma \wedge \varphi + \tau$, hence $\partial \varphi = -\tau$ and $\overline{\partial} \varphi = -\overline{\gamma'}\wedge \varphi$ at $p$. Therefore
\[\begin{aligned}
\partial \eta &= \partial (\sum_i \eta_i \varphi_i)
= -\sum_{i,j}\eta_{i,j}\varphi_i \wedge \varphi_j - \frac{1}{2} \sum_{i,j,k} \eta_k T^k_{ij} \varphi_i \wedge \varphi_j,\\
\overline{\partial} \eta &=\overline{\partial} (\sum_i \eta_i \varphi_i)
= - \sum_{i,j} \eta_{i,\bar{j}} \varphi_i \wedge \overline{\varphi}_j -  \sum_{i,j,k} \eta_k \overline{T^i_{jk}} \varphi_i \wedge \overline{\varphi}_j.
\end{aligned}\]
Here the indices after comma stand for the covariant derivative with respect to $\nabla^{b}$. It is clear that $\nabla^b \eta=0$ and thus $\eta_{i,j}=0$, $\eta_{i,\bar{j}}=0$. Also, \eqref{parallel_10} implies that
\[  \sum_r(T^r_{\ell i}T^k_{r j} + T^r_{j\ell}T^k_{ri} + T^r_{ij}T^k_{r \ell})=0. \]
Let $k=\ell$ in the equation above and sum up, which yields that $\sum_r\eta_r T^r_{ij}=0$, and thus
\[\partial \eta=0,\quad \overline{\partial} \eta = -\sum_{i,j,k}\eta_k \overline{T^i_{jk}} \varphi_i \wedge \overline{\varphi}_j.\]
By the defining equation $\partial \omega^{n-1} = - \eta \wedge \omega^{n-1}$, we get
\[  \partial \overline{\partial} \omega^{n-1} =  (\overline{\partial} \eta
+  \eta \wedge \overline{\eta}) \wedge \omega^{n-1} = 0.\]
This completes the proof of the proposition.
\end{proof}

\vspace{0.4cm}

\section{BTP manifolds and proof of Theorem \ref{theorem1.1}}\label{BTP}

In this section, we will prove Theorem \ref{theorem1.1}, which characterizes Hermitian manifolds with Bismut parallel torsion in terms of the behavior of the Bismut curvature $R^b$ alone.

First we will give the following two technical lemmata, the latter has already appeared in \cite[\S 4]{ZhaoZ19Str}, but we include the proof here for readers' convenience.

\begin{lemma}\label{swap}
Let $(M^n,g)$ be a Hermitian manifold. Then the followings are equivalent
\begin{enumerate}
\item $R^b_{i \bar{j} k \bar{\ell}} = R^b_{k \bar{\ell} i \bar{j}}$,
\item $Q_{i \bar{j} k \bar{\ell}}=-Q_{i\bar{\ell} k \bar{j}}= \overline{Q_{j\bar{i}\ell \bar{k}}}$,
\item $T^j_{ik ,\overline{\ell }} = - T^{\ell}_{ik ,\overline{j } } = \overline{  T^i_{j\ell  ,\overline{k }} }$,
\end{enumerate}
for any $i,j,k,\ell$.
\end{lemma}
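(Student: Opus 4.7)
The plan is to use the refined formula \eqref{dbT_refined} of Lemma \ref{lemma3.2} as the bridge between (3) and (2), and the definition of $Q$ together with the conjugation rule $\overline{R^b_{i\bar{j}k\bar{\ell}}}=R^b_{j\bar{i}\ell\bar{k}}$ (valid for any Hermitian connection) as the bridge between (2) and (1). After regrouping the curvature terms in \eqref{dbT_refined} into $Q$'s, the identity reads
\[ T^j_{ik,\overline{\ell}} \ = \ -\tfrac{1}{3}P^{j\ell}_{ik} \,+\, \tfrac{2}{3}Q_{i\bar{\ell}k\bar{j}} \,+\, \tfrac{1}{3}Q_{i\bar{j}k\bar{\ell}}, \]
and this single identity, combined with the elementary antisymmetry $P^{j\ell}_{ik}=-P^{\ell j}_{ik}$ and the reality relation $\overline{P^{ik}_{j\ell}}=P^{j\ell}_{ik}$ from Definition \ref{def3.3}, will drive the whole argument.

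First I would treat (1)$\,\Leftrightarrow\,$(2). Expanding each $Q$ via its definition and applying the reality rule $\overline{R^b_{a\bar{b}c\bar{d}}}=R^b_{b\bar{a}d\bar{c}}$, the equality $Q_{i\bar{j}k\bar{\ell}}=\overline{Q_{j\bar{i}\ell\bar{k}}}$ collapses, after cancellation of the common term $R^b_{i\bar{j}k\bar{\ell}}$, to $R^b_{k\bar{j}i\bar{\ell}}=R^b_{i\bar{\ell}k\bar{j}}$, which after relabeling is exactly the pair-swap symmetry (1). The remaining equality $Q_{i\bar{j}k\bar{\ell}}=-Q_{i\bar{\ell}k\bar{j}}$ is then immediate by writing both sides in terms of $R^b$ and applying (1) twice; conversely, the two equalities in (2) together recover (1).

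Next I would establish (2)$\,\Leftrightarrow\,$(3). Substituting the displayed identity for $T^j_{ik,\overline{\ell}}$ into the sum $T^j_{ik,\overline{\ell}}+T^\ell_{ik,\overline{j}}$, the antisymmetry of $P^{j\ell}_{ik}$ in $(j,\ell)$ makes the $P$-contributions cancel, and what remains is exactly $Q_{i\bar{j}k\bar{\ell}}+Q_{i\bar{\ell}k\bar{j}}$; so the first equality in (3) is equivalent to the first equality in (2). Similarly, writing $\overline{T^i_{j\ell,\overline{k}}}$ via the same formula (now using $\overline{P^{ik}_{j\ell}}=P^{j\ell}_{ik}$ to make the $P$-terms match) and simplifying the remaining curvature expression with the reality rule, the difference $T^j_{ik,\overline{\ell}}-\overline{T^i_{j\ell,\overline{k}}}$ reduces to a single term of the form $R^b_{i\bar{\ell}k\bar{j}}-R^b_{k\bar{j}i\bar{\ell}}$, whose vanishing is, as above, equivalent to (1), hence to (2).

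The main obstacle, rather than any real conceptual depth, will be index bookkeeping: keeping careful track of which slot is conjugated and how the various symmetries of $P$ and of $R^b$ interact under the three permutations $(j,\ell)$, $(i,k)$, and overall complex conjugation. Beyond this clerical work, no input other than Lemma \ref{lemma3.2} and Definition \ref{def3.3} is required.
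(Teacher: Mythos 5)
Your proposal is correct and follows essentially the same route as the paper's own proof: both rest on \eqref{dbT_refined} (equivalently \eqref{dbT}) combined with the symmetries $P^{j\ell}_{ik}=-P^{\ell j}_{ik}=\overline{P^{ik}_{j\ell}}$ and the conjugation rule for $R^b$. Your packaging — observing that each ``conjugation'' sub-equality in (2) and (3) is individually equivalent to (1), while the two ``antisymmetry'' sub-equalities match each other via $T^j_{ik,\bar{\ell}}+T^{\ell}_{ik,\bar{j}}=Q_{i\bar{j}k\bar{\ell}}+Q_{i\bar{\ell}k\bar{j}}$ and $T^j_{ik,\bar{\ell}}-\overline{T^i_{j\ell,\bar{k}}}=R^b_{i\bar{\ell}k\bar{j}}-R^b_{k\bar{j}i\bar{\ell}}$ — is a slightly cleaner reorganization of the same computation, and all the identities you assert check out.
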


\begin{proof}
Let us assume that $R^b_{i \bar{j} k \bar{\ell}} = R^b_{k \bar{\ell} i \bar{j}}$ holds. Then it follows that
\[\begin{aligned}
Q_{i\bar{j}k\bar{\ell}} &= R^b_{i\bar{j}k\bar{\ell}} - R^b_{k\bar{j}i\bar{\ell}}
= R^b_{k\bar{\ell}i\bar{j}} - R^b_{i\bar{\ell}k\bar{j}} = -Q_{i\bar{\ell}k\bar{j}},\\
\overline{Q_{i\bar{j}k\bar{\ell}}} &= R^b_{j\bar{i}\ell\bar{k}} - R^b_{j\bar{k}\ell\bar{i}}
= R^b_{j\bar{i}\ell\bar{k}} - R^b_{\ell\bar{i}j\bar{k}} = Q_{j\bar{i}\ell\bar{k}}.
\end{aligned}\]
By \eqref{dbT_refined}, we get
\[T^j_{ik,\bar{\ell}} = -\frac{1}{3}P_{ik}^{j \ell} - \frac{1}{3}Q_{i\bar{j}k\bar{\ell}},\]
which implies that
\[T^j_{ik ,\overline{\ell }} = - T^{\ell}_{ik ,\overline{j } } = \overline{  T^i_{j\ell  ,\overline{k }} }.\]
Conversely, assume that the equality $T^j_{ik ,\overline{\ell }} = - T^{\ell}_{ik ,\overline{j } } = \overline{  T^i_{j\ell  ,\overline{k }} }$ holds. Then by \eqref{dbT} we get
\[Q_{i\bar{\ell}k\bar{j}}=R^{b}_{i\bar{\ell}k\bar{j}}-R^b_{k\bar{\ell}i\bar{j}} =3 T_{ik,\bar{\ell}}^j + P^{j\ell}_{ik},\]
which implies that $Q_{i\bar{j}k\bar{\ell}} = -Q_{i\bar{\ell}k\bar{j}}$ and $\overline{Q_{i\bar{j}k\bar{\ell}}}= Q_{j\bar{i}\ell\bar{k}}$. Also, it holds that
\[\begin{aligned}
R^b_{i \bar{j} k \bar{\ell}} - R^b_{k \bar{\ell} i \bar{j}} &= R^b_{i \bar{j} k \bar{\ell}} - R^b_{k\bar{j}i\bar{\ell}}
+R^b_{k\bar{j}i\bar{\ell}} - R^b_{k \bar{\ell} i \bar{j}} \\
&= Q_{i\bar{j}k\bar{\ell}} + \overline{R^b_{j\bar{k}\ell\bar{i}} - R^b_{\ell \bar{k} j \bar{i}}}\\
&= Q_{i\bar{j}k\bar{\ell}} + \overline{Q_{j\bar{k}\ell\bar{i}}} \\
&= 0.
\end{aligned}\]
This completes the proof of the lemma. \end{proof}

\begin{lemma}\label{commt}
Let $(M^n,g)$ be a Hermitian manifold. If $R^b_{ijk\bar{\ell}}=0$ for any $i,j,k,\ell$, then the following commutation formulae hold
\begin{enumerate}
\item $\eta_{i, \overline{j}\,\overline{k} } - \eta_{i, \overline{k}\,\overline{j} }  =   \sum_r \overline{T^{r}_{kj}} \, \eta_{i, \overline{r}}$,
\item $T^i_{j\ell , \overline{k} \, \overline{p}} - T^i_{j\ell , \overline{p} \, \overline{k} } =   \sum_r \overline{T^r_{pk} } T^i_{j\ell , \overline{r}}$,
\end{enumerate}
for any $i,j,k,\ell,p$. Again indices after comma stand for covariant derivatives with respect to $\nabla^b$.
\end{lemma}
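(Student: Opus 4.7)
The plan is to invoke the Ricci commutation identity for iterated covariant derivatives with respect to a linear connection that has torsion. For any connection $D$ with torsion $T^D$ and curvature $R^D$ and any tensor field $S$, one has
\[\nabla^2_{X,Y}S - \nabla^2_{Y,X}S \;=\; R^D(X,Y)\cdot S \,-\, \nabla^D_{T^D(X,Y)}S,\]
which in the index convention $S_{,\overline{a}\,\overline{b}} = (\nabla^2_{\overline{e}_b,\overline{e}_a}S)$ reproduces exactly the commutators appearing in parts (1) and (2). I would apply this to $D=\nabla^b$ with $X,Y\in T^{0,1}M$, taking $S=\eta$ for (1) and $S=T^c$ for (2).

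The key reduction is to show that, under the hypothesis $R^b_{ijk\overline{\ell}}=0$, the endomorphism $R^b(\overline{e}_j,\overline{e}_k)$ of the complexified tangent bundle is identically zero, so that the curvature contribution in the Ricci identity drops out entirely. Since $\nabla^b$ is Hermitian, this operator preserves the splitting $T^{1,0}\oplus T^{0,1}$, so it suffices to check that the two matrix blocks $\langle R^b(\overline{e}_j,\overline{e}_k)e_i,\overline{e}_a\rangle$ and $\langle R^b(\overline{e}_j,\overline{e}_k)\overline{e}_i,e_a\rangle$ vanish. Using the skew-symmetry of $R^b$ in its last two positions (the metric property of $\nabla^b$) together with the conjugation identity $\overline{R^b_{XYZ\overline{W}}}=R^b_{\overline{X}\,\overline{Y}\,\overline{Z}W}$, both entries reduce to conjugates of components of the form $R^b_{abc\overline{d}}$, which are zero by hypothesis.

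With the curvature term killed, the commutator reduces to $-\nabla^b_{T^b(\overline{e}_a,\overline{e}_b)}S$. From \eqref{eq:2.7} we have $T^b(\overline{e}_a,\overline{e}_b) = -\sum_r \overline{T^r_{ab}}\,\overline{e}_r$, and substituting this yields (1) when applied to $\eta$ and (2) when applied to $T^c$, since the action of $\nabla^b_{\overline{e}_r}$ on the respective tensor produces exactly $\eta_{i,\overline{r}}$ or $T^i_{j\ell,\overline{r}}$. I expect the only genuine obstacle to be pure bookkeeping: pinning down the index convention so that $S_{,\overline{a}\,\overline{b}}-S_{,\overline{b}\,\overline{a}}$ matches the correct order of directions in $\nabla^2_{X,Y}-\nabla^2_{Y,X}$, and combining this with the antisymmetry $T^r_{ab}=-T^r_{ba}$ so that the final index ordering on the right-hand sides agrees verbatim with the statement (namely $\overline{T^r_{kj}}$ and $\overline{T^r_{pk}}$ rather than the opposite).
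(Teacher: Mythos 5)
Your proposal is correct and is essentially the paper's own argument: the paper derives the same commutation formula by hand in a unitary frame with $\theta^b$ vanishing at a point, which is precisely the Ricci identity for a connection with torsion that you invoke, with the curvature contribution killed because $R^b_{\bar p\bar k\,\cdot\,\cdot}$ reduces by conjugation and metric skew-symmetry to components of type $R^b_{ijk\bar\ell}=0$, and the commutator $[\overline{e}_p,\overline{e}_k]=-\sum_r\overline{T^r_{kp}}\,\overline{e}_r$ supplying the torsion term via \eqref{eq:2.7}. The index conventions work out exactly as you anticipate, so there is no gap.
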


\begin{proof}
First let us prove the second equality. From the definition of covariant derivatives,
\[T_{j \ell,\bar{k}}^i= \overline{e}_k T_{j \ell}^i - \sum_r \{ T^i_{r\ell }\,\langle \nabla^b_{\overline{e}_k}e_j,\overline{e}_r \rangle  + T^i_{j r}\,\langle \nabla^b_{\overline{e}_k} e_{\ell},\overline{e}_r \rangle  - T^r_{j \ell}\,\langle \nabla^b_{\overline{e}_k}e_r, \overline{e}_i\rangle \} .\]
For $\forall\ x\in M$, choose a local unitary frame $e$ so that the matrix $\theta^b=0$ at the point $x$, then we have
\[T_{j \ell,\bar{k}\,\bar{p}}^i = \overline{e}_p\overline{e}_k T_{j \ell}^i -
\sum_r \{ T^i_{r\ell }\,\langle  \nabla^b_{\overline{e}_p}\nabla^b_{\overline{e}_k}e_j,\overline{e}_r \rangle  + T^i_{j r}\,\langle \nabla^b_{\overline{e}_p}\nabla^b_{\overline{e}_k} e_{\ell},\overline{e}_r\rangle   - T^r_{j \ell}\,\langle \nabla^b_{\overline{e}_p}\nabla^b_{\overline{e}_k}e_r,\overline{e}_i \rangle \} , \]
which implies that
\[ T^i_{j\ell , \overline{k} \, \overline{p}} - T^i_{j\ell , \overline{p} \, \overline{k} } =
[\overline{e}_p,\overline{e}_k] T_{j \ell}^i - \sum_r \{ T^i_{r\ell }\,R^b_{\bar{p}\bar{k}j\bar{r}} + T^i_{j r}\,R^b_{\bar{p} \bar{k} \ell \bar{r}} - T^r_{j \ell}\,R^b_{\bar{p}\bar{k}r\bar{i}} \} .\]
Clearly at the point $x$,
\[ [\overline{e}_p,\overline{e}_k ]= \nabla_{\overline{e}_p}\overline{e}_k-\nabla_{\overline{e}_k}\overline{e}_p = -\overline{T^r_{kp}} \overline{e}_r,\]
where $\nabla$ is the Levi-Civita connection. So the second equality in the lemma is established since $R^b_{ijk\bar{\ell}}=0$. By letting $i=j$ in the second equality and sum over, we get the first equality.
\end{proof}

Now we are ready to prove Theorem \ref{theorem1.1}.

\begin{proof}[{\bf Proof of Theorem \ref{theorem1.1}:}]
By Proposition \ref{prop3.6}, it is clear that the BTP condition implies each of the four conditions in Theorem \ref{theorem1.1},
so we just need to prove the converse, namely, the combination of conditions (\ref{eq:1.1})\,--\,(\ref{eq:1.4}) in Theorem \ref{theorem1.1} implies that the metric is BTP.
We will follow the strategy in the proof of \cite[Proposition 2]{ZhaoZ19Str} to show that $\nabla^bT^c=0$ holds and thus $\nabla^bT^b=0$  by \eqref{eq:2.8}.

Note that the condition (\ref{eq:1.1}) in Theorem \ref{theorem1.1} is equivalent to $\nabla^b_{1,0}T^c=0$ by \eqref{parallel_10}, which implies \begin{equation}\label{3T}
\sum_r(T^r_{\ell i}T^k_{r j} + T^r_{j\ell}T^k_{ri} + T^r_{ij}T^k_{r \ell})=0,
\end{equation}
and the fourth equality (\ref{eq:1.4}) means
\begin{equation}\label{vR}
\sum_{i,r} \overline{\eta_i} ( R^b_{i\bar{j}r\bar{r}} - R^b_{r \bar{j} i \bar{r}}) =0,   \ \ \  \forall \ j.
\end{equation}
In the meantime, By (\ref{eq:1.2}), we deduce that $\mathrm{Ric}(Q)$ is Hermitian symmetric:
\[\mathrm{Ric}(Q)_{i \bar{j}}= \sum_r R^b_{i \bar{j} r \bar{r}} - R^b_{r \bar{j} i \bar{r}}
= \overline{\sum_r R^b_{j \bar{i} r \bar{r}} - R^b_{j \bar{r} r \bar{i}}}
= \overline{\sum_r R^b_{j \bar{i} r \bar{r}} - R^b_{r \bar{i} j \bar{r}}}
=\overline{\mathrm{Ric}(Q)_{j \bar{i}}}.\]

Firstly, we will show that $|\eta|^2$ is a constant. We have $\eta_{i,j}=0$ since $R^b_{ijk\overline{\ell}}=0$.
By \eqref{dbeta} and \eqref{eq:1.2}, we get
\[|\eta|^2_{,\bar{j}}= \sum_i \eta_{i,\bar{j}}\overline{\eta_i}
=-\frac{1}{3}(\sum_{i,r}\eta_r \overline{T^i_{jr}} + \overline{\eta}_r T^j_{ir} - \sum_{i,t,r}T^j_{tr}\overline{T^i_{tr}})\overline{\eta_i} - \frac{1}{3}\sum_{i,r}(R^b_{i\bar{j}r\bar{r}} - R^b_{r\bar{j}i\bar{r}})\overline{\eta_i}=0\]
for any $j$. Here we used \eqref{vR} and the equality
\begin{equation}\label{eta_T}
\sum_r \eta_r T_{ij}^r =0,
\end{equation}
which follows from \eqref{3T} after we let $k=\ell$ and sum up. Therefore we know that $|\eta|^2$ is a constant.

Secondly, we will show that $\nabla^b \eta=0$. It suffices to prove $\nabla^b_{0,1}\eta=0$.
As $|\eta|^2$ is a constant, it yields that, after we take the covariant derivative in $\overline{\ell}$,
\[\sum_k \eta_{k,\bar{\ell}} \,\overline{\eta_k}=0.\]
Take the covariant derivative in $\ell$ and sum up $\ell$ again, then we get
\begin{equation}\label{eta_cst}
\sum_{k,\ell} \big( |\eta_{k,\bar{\ell}}|^2 + \eta_{k,\bar{\ell}\ell}\,\overline{\eta_{k}}  \big) =0.
\end{equation}
The equalities \eqref{dbeta} and \eqref{eq:1.2} imply that
\begin{equation}\label{dbeta_TPLL}
\eta_{i,\bar{j}}  =  -\frac{1}{3}( \overline{\phi_j^i} + \phi_i^j - B_{i\bar{j}})
- \frac{1}{3} \mathrm{Ric}(Q)_{i\bar{j}},
\end{equation}
thus $\eta_{i,\bar{j}}$ is Hermitian symmetric since $\mathrm{Ric}(Q)_{i\bar{j}}$ is so.
Then it yields
\[\begin{aligned}
 9 \sum_{k,\ell} |\eta_{k,\bar{\ell}}|^2 &= |\phi+\phi^*-B|^2 +  |\mathrm{Ric}(Q)|^2
+2 \mathrm{Re}\left(\phi \mathrm{Ric}(Q) + \overline{\phi \mathrm{Ric}(Q)} - B \mathrm{Ric}(Q)\right), \\
&= \left(|\phi + \phi^*|^2+|B|^2-4 \mathrm{Re}(\phi B)\right)
+  |\mathrm{Ric}(Q)|^2 + 4 \mathrm{Re} \left( \phi \mathrm{Ric}(Q) \right)- 2 \mathrm{Re} \left( B \mathrm{Ric}(Q) \right),
\end{aligned}\]
where
\[|\phi+\phi^*-B|^2 =\sum_{k,\ell}(\phi_k^\ell+\overline{\phi_\ell^k}-B_{k\bar{\ell}})\overline{(\phi_k^\ell+\overline{\phi_\ell^k}-B_{k\bar{\ell}})},
\quad |\mathrm{Ric}(Q)|^2=\sum_{k,\ell} \mathrm{Ric}(Q)_{k \bar{\ell}} \overline{ \mathrm{Ric}(Q)_{k \bar{\ell}}},\]
\[\phi\mathrm{Ric}(Q)=\sum_{k,\ell} \phi^k_\ell \mathrm{Ric}(Q)_{k \bar{\ell}}, \quad
B\mathrm{Ric}(Q) = \sum_{k,\ell} B_{k\bar{\ell}}\mathrm{Ric}(Q)_{\ell \bar{k}}, \quad \phi B = \sum_{k,\ell} \phi^k_{\ell} B_{k\bar{\ell}}.\]
Note that
\[|\phi + \phi^*|^2= \sum_{k,\ell}(\phi^\ell_k + \overline{ \phi^k_\ell} ) ( \overline{ \phi^\ell_k + \overline{\phi^k_\ell}} )
= 2|\phi|^2 + 2\mathrm{Re}( \phi \cdot \phi ),\]
where we wrote
\[ \phi \cdot \phi = \sum_{k, \ell} \phi_k^\ell \phi_\ell^k.\]
Then it follows from Lemma \ref{commt}, the $\nabla^b$-parallelness of $\mathrm{Ric}(Q)$, \eqref{dbeta_TPLL} and Lemma \ref{swap} that
\[\begin{aligned}
\sum_\ell \eta_{k,\bar{\ell} \ell} &= \sum_\ell \left( \overline{\eta_{\ell,\bar{k}}}\right)_{\!,\ell} = \sum_{\ell} \overline{\eta_{\ell,\bar{k}\,\bar{\ell}}}  = \sum_\ell  \big( \overline{\eta_{\ell, \bar{\ell}\,\bar{k}}
+  \sum_r \overline{T^r_{\ell k}}\eta_{\ell,\bar{r}}} \big)  \\
&= \overline{ \big(\sum_\ell \eta_{\ell,\bar{\ell}}\big)_{\!,\bar{k}}} +  \sum_{\ell,r} T^r_{\ell k}\eta_{r,\bar{\ell}} \\
&= \overline{\big\{ \frac{1}{3}(|T|^2-2|\eta|^2)-\frac{1}{3} \sum_r \mathrm{Ric}(Q)_{r \bar{r}}\big\}_{\!,\bar{k}}}
+  \sum_{\ell,r} T^r_{\ell k}\eta_{r,\bar{\ell}} \\
&=  \frac{1}{3}(|T|^2)_{\!,k}
+  \sum_{\ell,r} T^r_{\ell k}\eta_{r,\bar{\ell}}  =    \frac{1}{3} \sum_{i,j ,r} T^i_{j r} \overline{  T^i_{j r,\bar{k} }  } +   \sum_{\ell,r} T^r_{\ell k}\eta_{r,\bar{\ell}}  \\
&= \frac{1}{3} \sum_{i,j,r}T^i_{jr} \overline{T^i_{jr,\bar{k}}} + \frac{1}{3} \sum_{\ell ,r}T^r_{\ell k} \left( B_{r \bar{\ell}} - \phi_r^{\ell} - \overline{\phi_{\ell}^r}  - \mathrm{Ric}(Q)_{r \bar{\ell}} \right)
\end{aligned}\]
Hence, by \eqref{dbT_refined}, \eqref{eq:1.2} and \eqref{eta_T}, it yields
\[ \begin{aligned}
\sum_{k,\ell}\eta_{k,\bar{\ell}\ell} \,\overline{\eta_k} &=
\frac{1}{3} \sum_{i,j,k,r} \overline{\eta_k} T^i_{jr} T^j_{ik,\bar{r}} + \frac{1}{3} \sum_{k,\ell ,r} \overline{\eta_k} T^r_{\ell k} \left(  B_{r \bar{\ell}} - \phi_r^{\ell} - \overline{\phi_{\ell}^r} -  \mathrm{Ric}(Q)_{r \bar{\ell}} \right) \\
&= -\frac{1}{9} \sum_{i,j,k,r} \overline{\eta_k} T^i_{jr} \left(\sum_q T^q_{ik}\overline{T^q_{j r}} + T^j_{iq} \overline{T^k_{r q}}
- T^j_{kq}\overline{T^i_{r q}} - T^r_{iq} \overline{T^k_{jq}} + T^r_{kq}\overline{T^i_{jq}}
+ ( R^b_{i\bar{j}k\bar{r}} - R^b_{k \bar{j} i \bar{r}} )\right) \\
&\quad + \frac{1}{3}(\phi B - \phi \cdot \phi - |\phi|^2 - \phi \mathrm{Ric}(Q)) \\
&= -\frac{1}{9}(\phi B - 2\phi A)  - \frac{1}{9} \sum_{i,j,k,r} \overline{\eta_k} T^i_{jr} Q_{i\bar{j}k\bar{r}} + \frac{1}{3}( \phi B - \phi \cdot \phi - |\phi|^2  - \phi \mathrm{Ric}(Q))
 \\
&= \frac{2}{9}(\phi B + \phi A)- \frac{1}{3}(\phi \cdot \phi + |\phi|^2) - \frac{1}{3} \phi \mathrm{Ric}(Q)
- \frac{1}{9} \sum_{i,j,k,r} \overline{\eta_k} T^i_{jr} Q_{i\bar{j}k\bar{r}} ,
\end{aligned}\]
where
\[\phi A = \sum_{k,\ell}\phi^k_{\ell} A_{k \bar{\ell}}.\]
Plug the above into  the equality \eqref{eta_cst}, and multiply by 9, then we get
\[ |B- \phi - \phi^{\ast} - \mathrm{Ric}(Q) |^2 + 2 \mathrm{Re}(\phi B + \phi A) - 3\mathrm{Re}(\phi \cdot \phi + |\phi|^2) -3\mathrm{Re}(\phi \mathrm{Ric}(Q)) - \mathrm{Re}(\sum_{i,j,k,r} \overline{\eta_k} T^i_{jr} Q_{i\bar{j}k\bar{r}}) = 0. \]
That is
\begin{equation}\label{eq_1}
\begin{aligned}
|B|^2  + 2 \mathrm{Re}(\phi A - \phi B ) - 2 \mathrm{Re}(B\mathrm{Ric}(Q)) & \ = \
\frac{1}{2}|\phi+\phi^*|^2 -  \mathrm{Re}(\phi \mathrm{Ric}(Q))\\
& \ \ \ - |\mathrm{Ric}(Q)|^2 +  \mathrm{Re} \big(\sum_{i,j,k,r} \overline{\eta_k} T^i_{jr} Q_{i\bar{j}k\bar{r}}\big).
\end{aligned}
\end{equation}

On the other hand, by taking covariant derivative in $\bar{\ell}$
on both sides of the equality \eqref{eta_T}, we obtain
\[\sum_r \eta_{r,\bar{\ell}}\,T^r_{ij} + \eta_r\,T^r_{ij,\bar{\ell}} =0,\]
which means
\begin{gather}
\sum_r\left(\phi_r^{\ell} + \overline{\phi_{\ell}^r} - B_{r \bar{\ell}} + \mathrm{Ric}(Q)_{r\bar{\ell}} \right)T^r_{ij} \notag\\
+ \sum_r \eta_r \left(\sum_q (T^q_{ij}\overline{T^q_{r \ell}} + T^r_{iq} \overline{T^j_{\ell q}}
- T^r_{jq}\overline{T^i_{\ell q}} - T^\ell_{iq} \overline{T^j_{rq}} + T^\ell_{jq}\overline{T^i_{rq}})
+  ( R^b_{i\bar{r}j\bar{\ell}} - R^b_{j \bar{r} i \bar{\ell}} )\right)=0 , \notag
\end{gather}
or equivalently,
\begin{equation}\label{tr_etaT}
\sum_r\left((\phi_r^{\ell}-B_{r\bar{\ell}})T^r_{ij} +  \mathrm{Ric}(Q)_{r\bar{\ell}} T^r_{ij}
+  \overline{\phi^j_r}T^{\ell}_{ir} - \overline{\phi^i_r}T^{\ell}_{jr} +\eta_r Q_{i\bar{r}j\bar{\ell}}\right)=0.
\end{equation}
Multiply $\overline{\eta_j}$ on both sides of \eqref{tr_etaT} and sum up $j$, we obtain
\[ \sum_{r} \left(\phi_r^{\ell}\phi_i^{r} - \phi^r_i B_{r\bar{\ell}}
+  \phi^r_i \mathrm{Ric}(Q)_{r\bar{\ell}} +  \phi^{\ell}_{r}\overline{\phi^i_r}
+ \sum_j \eta_r \overline{\eta_j} Q_{i\bar{r}j\bar{\ell}}\right) =0,\]
where the equality \eqref{eta_T} is used again.
Let $i=\ell$ in the above identity and sum up $i$, and use \eqref{vR} to eliminate the last term, then we get
\[ \phi B = \phi \cdot \phi +  \phi \mathrm{Ric}(Q) + |\phi|^2,\]
hence
\begin{equation}\label{eq_2}
\mathrm{Re}(\phi B) = \mathrm{Re}(\phi \cdot \phi) +  \mathrm{Re}(\phi \mathrm{Ric}(Q)) + |\phi|^2
=\frac{1}{2}|\phi+\phi^*|^2 +  \mathrm{Re}(\phi \mathrm{Ric}(Q)).
\end{equation}
Similarly, multiply $\overline{T^{\ell}_{ij}}$ on both sides of \eqref{tr_etaT} and sum up $i,j,\ell$, it yields
\[ \phi B - |B|^2 +  B \mathrm{Ric}(Q) + 2 \overline{\phi A}
+ \sum_{i,j,r,\ell} \eta_r \overline{T^{\ell}_{ij}} Q_{i\bar{r}j\bar{\ell}}=0. \]
By taking the real part and utilizing Lemma \ref{swap}, we obtain
\begin{equation}\label{eq_3}
|B|^2 - \mathrm{Re}(\phi B) - 2 \mathrm{Re}(\overline{\phi A}) -  \mathrm{Re}( B \mathrm{Ric}(Q))
=-  \mathrm{Re} \left( \sum_{i,j,r,\ell} \overline{\eta_r} T^{\ell}_{ij}Q_{\ell \bar{i} r \bar{j}} \right).
\end{equation}
Note that the equality \eqref{vR} meams that
\[\sum_{i}\overline{\eta_i}\mathrm{Ric}(Q)_{i\bar{j}}=0, \ \ \ \ \forall \ j.\]
By taking covariant derivative in $k$ and using the assumption that  $\nabla^b \mathrm{Ric}(Q)=0$, we get
\[\sum_i \eta_{i,\bar{k}} \mathrm{Ric}(Q)_{j \bar{i}}=0,\]
hence
\[ \sum_i \left( \phi_i^k + \overline{\phi_k^i} - B_{i\bar{k}} +  \mathrm{Ric}(Q)_{i\bar{k}}\right) \mathrm{Ric}(Q)_{j \bar{i}}=0. \]
Let $j=k$ and sum up $j$, so we have
\[ \phi \mathrm{Ric}(Q) + \overline{\phi \mathrm{Ric}(Q)} - B \mathrm{Ric}(Q) + |\mathrm{Ric}(Q)|^2 =0, \]
and thus
\begin{equation}\label{eq_4}
\mathrm{Re}(B\mathrm{Ric}(Q)) = 2 \mathrm{Re}(\phi \mathrm{Ric}(Q))  + |\mathrm{Ric}(Q)|^2 .
\end{equation}
In the meantime, by multiplying $\overline{T^k_{\ell j}}$ on both sides of \eqref{3T} and summing up $j,k,\ell$, we obtain
\[ \sum_{r,\ell} T^r_{\ell i}A_{r \bar{\ell}} - \sum_{k,r} T^k_{r i}B_{k \bar{r}} +  \sum_{j,r}T^r_{ji}A_{r\bar{j}}=0.\]
Hence
\begin{equation}\label{TBTA}
2 \sum_{r,\ell}T^r_{\ell i}A_{r \bar{\ell}} = \sum_{r,\ell} T^r_{\ell i}B_{r \bar{\ell}}.
\end{equation}
Multiply $\overline{\eta_i}$ on both sides above and sum up $i$, which yields
\[2 \phi A = \phi B,\]
and thus
\begin{equation}\label{eq_5}
2 \mathrm{Re} (\phi A) = \mathrm{Re} (\phi B).
\end{equation}
If we take the difference between \eqref{eq_1} and \eqref{eq_3}, and plug in  \eqref{eq_2}, \eqref{eq_4} and \eqref{eq_5}, then we get
\[ \mathrm{Re} \left(\sum_{i,j,k,r} \overline{\eta_k} T^i_{jr} Q_{i\bar{j}k\bar{r}}\right)=0.\]
Therefore, the equalities \eqref{eq_2} and \eqref{eq_4}  give us
\begin{eqnarray*}
&& \mathrm{Re}(\phi B)= 2\mathrm{Re}(\phi A)= \frac{1}{2}|\phi+\phi^*|^2 +  \mathrm{Re}(\phi  \mathrm{Ric}(Q)),\\
&& \mathrm{Re}(B\mathrm{Ric}(Q)) = 2 \mathrm{Re}(\phi \mathrm{Ric}(Q))  + |\mathrm{Ric}(Q)|^2,
\end{eqnarray*}
while \eqref{eq_3}, together with the above and \eqref{eq_5} give us
\[|B|^2 = |\phi + \phi^*|^2 + 4\mathrm{Re}(\phi \mathrm{Ric}(Q)) + |\mathrm{Ric}(Q)|^2.\]
Putting all these together, we compute that
\begin{eqnarray*}
9\sum_{k,\ell}|\eta_{k,\bar{\ell}}|^2   & = & |B-\phi -\phi^{\ast} - \mathrm{Ric}(Q)|^2 \\
& = & |\phi +\phi^{\ast}|^2 -4 \mathrm{Re}(\phi B) + |B|^2 -2 \mathrm{Re}(B\mathrm{Ric}(Q))  +4\mathrm{Re}(\phi \mathrm{Ric}(Q))  + |\mathrm{Ric}(Q)|^2\\
& = & |\phi +\phi^{\ast}|^2 -4 \{ \frac{1}{2}|\phi+\phi^*|^2 +  \mathrm{Re}(\phi  \mathrm{Ric}(Q)) \} + \{ |\phi + \phi^*|^2 + 4\mathrm{Re}(\phi \mathrm{Ric}(Q)) + |\mathrm{Ric}(Q)|^2\} \\
& & - 2\{ 2 \mathrm{Re}(\phi \mathrm{Ric}(Q))  + |\mathrm{Ric}(Q)|^2 \}  + 4\mathrm{Re}(\phi \mathrm{Ric}(Q))  + |\mathrm{Ric}(Q)|^2\\
& = & 0,
\end{eqnarray*}
so we must have $\eta_{k,\bar{\ell}} =0$ for any $k,\ell$, that is,  $\nabla^b\eta=0$.

Thirdly, we will show that $|T|^2$, $\phi_i^j$ and $B_{i \bar{j}}$ are $\nabla^b$-parallel. To this end, note that the equality \eqref{dbeta_TPLL} now means $$ \mathrm{Ric}(Q) = B - \phi - \phi^{\ast}. $$
Taking trace, we get
\[  \sum_i \mathrm{Ric}(Q)_{i\bar{i}}  = |T|^2-2|\eta|^2.\]
Since $|\eta|^2$ is a constant and $\nabla^b \mathrm{Ric}(Q)=0$, we know that $\nabla^b |T|^2 =0$, hence $|T|^2$ is a constant. It is clear that $\phi_{i,k}^j=0$ as $\nabla^b_{1,0}T^c=0$ and $\nabla^b \eta=0$, while we have
\[ \phi_{i,\bar{k}}^j = \sum_r \overline{\eta_r} T^j_{ir,\bar{k}} =- \sum_r \overline{\eta_r} T^j_{ri,\bar{k}}
= -\sum_r \overline{\eta_r T^r_{jk,\bar{i}}}=0,\]
where Lemma \ref{swap} and the equality \eqref{eta_T} are used. Hence $\phi_i^j$ is $\nabla^b$-parallel. By \eqref{dbeta_TPLL} which now says that $B=\mathrm{Ric}(Q) +\phi + \phi^{\ast} $, we know that $B_{i \bar{j}}$ is also $\nabla^b$-parallel.

Finally, we are ready to show that $\nabla^b T^c=0$.
It follows from \eqref{TBTA} that
\begin{equation}\label{vTA}
 2\sum_{i,\ell,r} \left(T^r_{\ell i} A_{r \bar{\ell}}\right)_{,\bar{i}}
= \sum_{i,\ell,r} \left(T^r_{\ell i}B_{r \bar{\ell}}\right)_{,\bar{i}}
= \sum_{i,\ell,r} T^r_{\ell i,\bar{i}}B_{r \bar{\ell}}
= - \sum_{i,\ell,r} T^i_{\ell i,\bar{r}}B_{r \bar{\ell}}
=  \sum_{\ell,r} \eta_{\ell,\bar{r}}B_{r \bar{\ell}}=0,
\end{equation}
where Lemma \ref{swap} is used.
Then, since $|T|^2$ is $\nabla^b$-parallel, it follows that, after we take the covariant derivative in $\bar{\ell}$,
\[0=|T|^2_{,\bar{\ell}}=\sum_{i,j,k}\overline{T^j_{ik}}T^j_{ik,\bar{\ell}} =0.\]
Take the covariant derivative in $\ell$ again and sum up $\ell$, so we get
\[\sum_{i,j,k,\ell} |T^j_{ik,\bar{\ell}}|^2 + \overline{T^j_{ik}}T^j_{ik,\bar{\ell}\,\ell} =0.\]
Note that, from Lemma \ref{swap}, Lemma \ref{commt} and $\nabla^b \eta=0$,
\[\begin{aligned}
\sum_{i,j,k,\ell} \overline{T^j_{ik}}T^j_{ik,\bar{\ell}\,\ell}& =
\sum_{i,j,k,\ell} \overline{T^j_{ik}} \left(\overline{T_{j \ell,\bar{k}}^i}\right)_{,\ell}
= \sum_{i,j,k,\ell} \overline{T^j_{ik} T_{j \ell,\bar{k}\,\bar{\ell}}^i}
= \sum_{i,j,k,\ell} \overline{T^j_{ik}( T_{j \ell,\bar{\ell}\,\bar{k}}^i +  \sum_r \overline{T^r_{\ell k}} T^i_{j \ell, \bar{r}} )} \\
&= - \sum_{i,j,k,\ell} \overline{T^j_{ik} T^{\ell}_{j \ell, \bar{i}\,\bar{k}}}
+  \sum_{i,j,k,\ell,r} T^r_{\ell k} \overline{T^j_{ik}} T^j_{ir,\bar{\ell}}
= \sum_{i,j,k} \overline{T^j_{ik}\eta_{j,\bar{i}\,\bar{k}}}
+  \sum_{k,\ell,r} T^r_{\ell k}A_{r\bar{k},\bar{\ell}} \\
&= - \sum_{k,\ell,r} \left( T^r_{k \ell}A_{r\bar{k}} \right)_{,\bar{\ell}}
+ \sum_{k,\ell,r} T^r_{k \ell, \bar{\ell}} A_{r\bar{k}}
= - \sum_{k,\ell,r} \left( T^r_{k \ell}A_{r\bar{k}} \right)_{,\bar{\ell}}
- \sum_{k,\ell,r} T_{k \ell,\bar{r}}^\ell A_{rk} \\
&= - \sum_{k,\ell,r} \left( T^r_{k \ell}A_{r\bar{k}} \right)_{,\bar{\ell}}
+ \sum_{k,r} \eta_{k,\bar{r}}A_{r\bar{k}}
=-2 \sum_{k,\ell,r} \left( T^r_{k \ell}A_{r\bar{k}} \right)_{,\bar{\ell}} \\
&=0,
\end{aligned}\]
where the last equality is due to \eqref{vTA}. Therefore, $T^j_{ik,\bar{\ell}}=0$ and the proof is completed.
\end{proof}

\vspace{0.4cm}

\section{Properties of BTP manifolds}\label{BTP_mfd}

In this section we will discuss the examples and properties of BTP manifolds and prove Propositions \ref{prop1.5}\,--\,\ref{cor1.11}.
We start with the proof of Proposition \ref{prop1.5} which gives a number of general properties for BTP manifolds.

\begin{proof}[{\bf Proof of Proposition \ref{prop1.5}.}]
Let $(M^n,g)$ be a BTP manifold. Note that the equalities (\ref{eq:1.5}) and (\ref{eq:1.7})\,--\,(\ref{eq:1.9})
have already been established in Lemma \ref{lemma3.1}, Propositions \ref{Tderivative} and \ref{prop3.6}.
Multiple the equality (\ref{eq:1.5}) by $\overline{T_{jk}^\ell}$ and sum up $j,k,\ell$, then the equality (\ref{eq:1.6}) is also established.
So we will start with the proof of (\ref{eq:1.10}).

Fix any $p\in M$. Choose a local unitary frame $e$ in a neighborhood of $p$
so that the matrix of Bismut connection vanishes at $p$, i.e. $\theta^b|_p=0$.
Denote by $\varphi$ the coframe dual to $e$. Then the Chern connection matrix satisfies $\theta|_p=-\gamma|_p$.
Hence by the structure equation
$ d\varphi = - \,^t\!\theta \wedge \varphi +\tau$, we have
$$ \partial \varphi_i = -\frac{1}{2}\sum_{r,s}T^i_{rs}\varphi_r \wedge \varphi_s, \ \ \ \overline{\partial}\varphi_j = - \sum_{t,r} \overline{T^t_{jr}} \, \overline{\varphi}_r \wedge \varphi_t, \ \ \ \ \ \mbox{at} \ \ p. $$
The conjugation of the second equality of (\ref{eq:1.8}) implies that $\partial \overline{\eta} = \sum_{i,j} \phi^j_i \varphi_i \wedge \overline{\varphi}_j$. After taking $\partial $ again and using the fact that $\nabla^b\phi =0$, we know that at $p$
\begin{eqnarray*}
 0 & = &  \partial^2\overline{\eta} = \sum_{i,j} \phi^j_i \big\{ -\frac{1}{2}\sum_{r,s} T^i_{rs} \varphi_r\varphi_s \wedge \overline{\varphi}_j - \varphi_i \wedge  \sum_{t,r} T^t_{jr} \varphi_r \overline{\varphi}_t \big\} \\
 & = & \frac{1}{2} \sum_{i,j,k,r} \big( - \phi^j_r T^r_{ik} + \phi^r_i T^j_{rk} + \phi^r_k T^j_{ir} \big) \,\varphi_i\varphi_k\overline{\varphi}_j .
\end{eqnarray*}
Thus (\ref{eq:1.10}) is established. Next we will prove (\ref{eq:1.11}).
Let us multiply $\overline{\eta}_k$ onto both sides of the equality (\ref{eq:1.7}) and sum up $k$.
The equality (4) of Proposition \ref{prop3.6} indicates the left hand side becomes zero, so we end up with
\begin{equation} \label{eq:4.1}
 0 = \sum_r \{ -\phi^j_r \overline{T^i_{\ell r}} + \phi^{\ell }_r \overline{T^i_{j r}}  - \phi^r_i \overline{T^r_{j\ell }}  \} , \ \ \ \ \ \forall \ i,j,\ell.
 \end{equation}
Multiply by $\eta_{\ell}$ onto the above equation and sum up $\ell$.
By utilizing the fact that $\sum_{\ell} T^{\ell}_{rs}\eta_{\ell} =0$ which can be derived from (\ref{eq:1.5}), we get
$$ 0 = \sum_r \{ \phi^j_r \overline{\phi^i_r } - \phi^r_i \overline{\phi^r_j } \} .$$
that is, $\phi$ commutes with $\phi^{\ast}$. If we multiply $T^s_{j\ell }$ onto (\ref{eq:4.1}) and sum up $j$, $\ell$,
then we have \[\sum_r \phi_i^r B_{r\bar{s}} = 2 \sum_{j,\ell,r} \phi^j_r T^s_{j \ell} \overline{T^i_{r \ell}}
=\sum_{r} B_{i\bar{r}} \phi_r^s.\]
Similarly, if we multiply $T^i_{\ell s}$ onto (\ref{eq:4.1}) and sum up $i$, $\ell$, then we have
\[\sum_r A_{s\bar{r}}\phi_r^j
=\sum_{i,\ell,r}(\phi^{\ell}_r T^i_{\ell s} \overline{T^i_{jr}} + \phi_i^r T^i_{s \ell}\overline{T^r_{j \ell}})
=\sum_r \phi_s^r A_{r \bar{j}}.\]
Note that here we have used (\ref{eq:1.5}) in obtaining  both of the above equalities. Hence, $\phi$ commutes with both $A$ and $B$.

Finally we will show that $A$ commutes with $B$.
To this end, let us choose our local unitary frame $e$ so that $A$ is diagonal, which can be decomposed into
several blocks, where diagonal entries remain the same in each block, while different blocks enjoy distinct representing entries.
Since $\nabla^b A=0$, it yields that
$$ d A_{i\bar{j}} = \sum_r \{  A_{r\bar{j}} \theta^b_{ir} - A_{i\bar{r}} \theta^b_{rj} \} , \ \ \ \ \ \ \forall \  i,j. $$
This implies that $\theta^b_{ij}=0$ whenever $A_{i\bar{i}} \neq A_{j\bar{j}}$,
so $\theta^b$ is block-diagonal with respect to those blocks of $A$.
By the structure equation $\Theta^b=d\theta^b - \theta^b\wedge \theta^b$,
the matrix of Bismut curvature $\Theta^b$ is also block-diagonal.
This means that $R^b_{i\bar{j}\ast \bar{\ast}} = R^b_{\ast \bar{\ast}i\bar{j}} =0$
whenever $A_{i\bar{i}} \neq A_{j\bar{j}}$, i.e. $i$ and $j$ are from different blocks.
In the meantime, for any $i$ and $j$ from different blocks,
$$ \mathrm{Ric}(Q)_{i \bar{j}} = \sum_r \big\{ R^b_{i\bar{j}r\bar{r}} - R^b_{r\bar{j}i\bar{r}} \big\} = - \sum_r R^b_{r\bar{j}i\bar{r}} =0,$$
since $r$ cannot belong to the same block with $i$ and $j$ simultaneously.
This shows that $\mathrm{Ric}(Q)$ is block-diagonal with respect to those $A$-blocks, hence it commutes with $A$.
On the other hand, by letting $k=\ell$ and summing up in (\ref{eq:1.7}), we get $\mathrm{Ric}(Q)=B-\phi -\phi^{\ast}$.
Since we already showed that $A$ commutes with $\phi$ hence $\phi^{\ast}$, we know that $A$ commutes with $B$ as well.
This completes the proof of Proposition \ref{prop1.5}.
\end{proof}

We will then discuss Proposition \ref{prop1.7} about the existence of admissible frames and
other properties for non-balanced BTP manifolds. Let $(M^n,g)$ be a non-balanced BTP manifold.
Denote by $\chi$ the vector field of type $(1,0)$  dual to $\eta$, which is characterized by
$\langle Y, \overline{\chi } \rangle =\eta (Y)$ for any type $(1,0)$-vector field $Y$.

\begin{proof}[{\bf Proof of Proposition \ref{prop1.7}.}]
Let $(M^n,g)$ be a non-balanced BTP manifold. It is clear that $|\eta|^2$ is a positive constant,
which will be denoted by $\lambda^2$. Thus we may choose the unitary coframe $\varphi$ dual to $e$
such that $\eta = \lambda \varphi_n$, which means that $\eta_1=\cdots=\eta_{n-1}=0$ and $\eta_n=\lambda$, thus $\chi=\lambda e_n$.
Then the first equality in \eqref{eq:1.12} indicates that $T^n_{ij}=0$ for any $i,j$,
which implies $\phi_i^j=\lambda T^j_{in}$ and $\phi_n^{j} = \phi^n_{j}=0$.
By (\ref{eq:1.11}), we know that the matrix $\phi = (\phi_i^j)$ commutes with $\phi^{\ast}$ hence is normal,
where $(\phi^{\ast})_i^j=\phi_j^i$.  So around each given point,
by a unitary change of $\{ \varphi_1 , \ldots , \varphi_{n-1}\}$ with $\varphi_n$ fixed,
we could always make $\phi$ diagonal, that is, $T^j_{in}=a_i\delta_{ij}$, where $a_n=0$ and $a_1+\cdots +a_{n-1}=\lambda$.
This gives us an admissible local unitary frame.

Then, under such admissible frame $e$, the vector $e_n=\frac{1}{\lambda}\chi$ is parallel under $\nabla^b$, so the Bismut connection matrix $\theta^b$ satisfies $\theta^b_{n\ast}=\theta^b_{\ast n}=0$. It is easy to see that, for any type $(1,0)$ vector field $X$,
\[\nabla^c_{\overline{X}}\chi=\lambda (\nabla^b_{\overline{X}} -  \gamma_{\overline{X}})e_n=\lambda \sum_r \big( \theta^b_{nr}(\overline{X}) + \overline{T^n_{rX}} \big)e_r=0,\]
thus $\chi$ is holomorphic. By the structure equation $\Theta^b=d\theta^b-\theta^b\wedge \theta^b$, the curvature matrix $\Theta^b$ under $e$ will also be block-diagonal, taking the form
$$ \Theta^b = \left[ \begin{array}{cc} \ast & 0 \\ 0 & 0 \end{array} \right]\!\!, $$
where $\ast$ is an $(n\!-\!1)\times (n\!-\!1)$ matrix. As is well-known,
the Lie algebra of the restricted holonomy group $\mbox{Hol}_{0}(\nabla^b)$ of $\nabla^b$
is generated by conjugation classes of $R^b_{xy}=\Theta^b(x,y)$ under $\nabla^b$-parallel transports,
and all of them operate in the $\nabla^b$-parallel block $e_n^{\perp}$, so $\mbox{Hol}^{0}(\nabla^b) \subseteq U(n-1) \times 1$.
Henece, we have established the existence of admissible frames and the reduction of Bismut holonomy.

It remains to show that $g$ is locally conformally balanced
if and only if all $a_i$ are real, and also when $M$ is compact, it can never be globally conformally balanced.

Let $e$ be an admissible frame. Then by (\ref{eq:1.8}) we have $\partial \eta =0$ and $\overline{\partial}\eta = -\lambda \sum_i \overline{a}_i \varphi_i \wedge \overline{\varphi}_i $, which implies
$$ d(\eta + \overline{\eta}) = \overline{\partial}\eta + \partial \overline{\eta}
= \lambda \sum_i (a_i - \overline{a}_i) \varphi_i \wedge \overline{\varphi}_i.$$
So the metric $g$ will be locally conformally balanced, i.e. the $1$-form $\eta + \overline{\eta} $ is closed,
if and only if all $a_i$ are real. If $g$ is globally conformally balanced, namely,
$\eta +\overline{\eta} = df$ for some real smooth function $f$ on $M$, then when $M$ is compact, we have
$$ \int_M \eta \wedge \overline{\eta} \wedge \omega^{n-1} = - \int_M \overline{\partial}\eta \wedge \omega^{n-1} = -\int_M \overline{\partial}\partial f \wedge \omega^{n-1} = \int_M f \partial\overline{\partial} (\omega^{n-1}) =0,$$
thus $\eta =0$, which contradicts with our assumption that $g$ is not balanced.
Here we use the fact that a BTP metric is necessarily Gauduchon, which was shown in \eqref{eq:1.13}.
Therefore when $M$ is compact, the non-balanced BTP metric $g$ can never be globally conformally balanced.
This completes the proof of Proposition \ref{prop1.7}
\end{proof}

\begin{remark}
It is clear that, for non-balanced BTP manifolds, the distribution generated by $\chi$ is a foliation,
as it is easy to verify $[\chi,\overline{\chi}]=0$ under the admissible frames.
Then we will also call it the canonical foliation as in the theory of Vaisman manifolds $($cf. \cite{Ts94,Ts97,OV22,Ist}$)$.
\end{remark}

The following notation of degenerate torsion, introduced in \cite[Defintion 5]{YZZ}, could also be applied to non-balanced BTP manifolds.

\begin{definition}\label{degtor_def}
A non-balanced BTP manifold $(M^n,g)$ is said to have {\em degenerate torsion,}
if under any admissible frame $e$, $T^{\ast }_{ik}=0$ for any $i,k <n$.
\end{definition}

\begin{definition}[Belgun \cite{B12}]\label{LP_GCE}
A Hermitian manifold $(M^n,g)$ is {\em LP} (which stands for `Lee potential') if the Gauduchon torsion $1$-form $\eta$ satisfies
\[\partial \eta =0 , \quad \partial \omega = c \,\eta \wedge\partial \overline{\eta},\]
where $c$ is a constant. A Hermitian manifold is {\em GCE}  (which stands for `generalized Calabi-Eckmann') if it is LP and BTP.
As shown in \cite[Proposition 3.2]{B12}, both the standard Hermitian structure and the modified ones on the Sasakian product are non-K\"ahler GCE manifolds.
\end{definition}

\begin{proposition}\label{BTP_LP}
Let $(M^n,g)$ be a non-balanced BTP manifold for $n\geq 3$.
Then the metric $g$ satisfies the LP condition if and only if it has degenerate torsion.
In particular, when $n=3$, non-balanced BTP threefolds coincide with non-K\"ahler GCE threefolds
in the sense of Belgun.
\end{proposition}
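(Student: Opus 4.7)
The plan is to exploit the admissible frame of Proposition \ref{prop1.7}. Fix such a unitary coframe $\varphi$ so that $\eta=\lambda\varphi_n$, $T^n_{ij}=0$, and $T^j_{in}=a_i\delta_{ij}$. Conjugating the formula $\overline{\partial}\eta=-\sum_{i,j}\overline{\phi^i_j}\,\varphi_i\wedge\overline{\varphi}_j$ from \eqref{eq:1.8} and using $\phi^j_i=\lambda a_i\delta_{ij}$, I would obtain
\[
\partial\overline{\eta}\;=\;\lambda\sum_i a_i\,\varphi_i\wedge\overline{\varphi}_i,
\qquad
\eta\wedge\partial\overline{\eta}\;=\;\lambda^2\sum_{i<n}a_i\,\varphi_n\wedge\varphi_i\wedge\overline{\varphi}_i,
\]
which is supported purely on basis $(2,1)$-forms of the shape $\varphi_n\wedge\varphi_i\wedge\overline{\varphi}_i$.

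Next I would record the standard identity $\partial\omega=\sqrt{-1}\sum_{i<j,k}T^k_{ij}\,\varphi_i\wedge\varphi_j\wedge\overline{\varphi}_k$ (derived at a point where $\theta^b=0$ from the structure equation, and tensorial in nature). Substituting the admissible-frame values of $T$ splits $\partial\omega$ into an ``$n$-part''
\[
-\sqrt{-1}\sum_{i<n}a_i\,\varphi_n\wedge\varphi_i\wedge\overline{\varphi}_i\;=\;-\tfrac{\sqrt{-1}}{\lambda^{2}}\,\eta\wedge\partial\overline{\eta},
\]
and a ``transversal part'' $\sqrt{-1}\sum_{i<j<n,\,k<n}T^k_{ij}\,\varphi_i\wedge\varphi_j\wedge\overline{\varphi}_k$. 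Since the basis $(2,1)$-forms $\varphi_i\wedge\varphi_j\wedge\overline{\varphi}_k$ with $i,j,k<n$ are linearly independent from those of the form $\varphi_n\wedge\varphi_s\wedge\overline{\varphi}_s$, the identity $\partial\omega=c\,\eta\wedge\partial\overline{\eta}$ forces the transversal part to vanish, which is exactly $T^k_{ij}=0$ for $i,j,k<n$, i.e.\ degenerate torsion; conversely, degenerate torsion makes $\partial\omega$ equal to the constant multiple $-\sqrt{-1}\lambda^{-2}\eta\wedge\partial\overline{\eta}$. Combining this with $\partial\eta=0$, which holds automatically on any BTP manifold by \eqref{eq:1.8}, yields the stated equivalence.

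For the ``in particular'' statement, the direction non-K\"ahler GCE $\Rightarrow$ non-balanced BTP is immediate: a balanced LP metric has $\partial\omega=c\,\eta\wedge\partial\overline{\eta}=0$, whence $d\omega=0$, contradicting non-K\"ahlerness. For the converse I must verify that every non-balanced BTP threefold has degenerate torsion. In dimension three the only candidate non-degenerate components are $T^1_{12}$ and $T^2_{12}$, and \eqref{eq:1.14} gives $a_2T^1_{12}=0$, $a_1T^2_{12}=0$, so in the generic case $a_1,a_2\neq 0$ degenerate torsion is immediate. The main obstacle is the borderline case, say $a_2=0$ and $a_1=\lambda$: here I would rule out $T^1_{12}\neq 0$ either by invoking the classification Theorem~\ref{3DNBBTP} and verifying in each of its three structural cases that the transversal torsion vanishes, or by expanding $\nabla^bT^c=0$ along the admissible frame---the components of $(\nabla^b_X T^c)(e_1,e_2)$ first force $\theta^b_{12}=0$ and then $d\alpha=\alpha\,\theta^b_{22}$ with $\alpha:=T^1_{12}$; a Bianchi calculation shows $\alpha$ is $\overline{\partial}$-closed, hence $\theta^b_{22}=0$ and $\alpha$ is a constant, producing a rigid splitting of $\widetilde M$ which is incompatible with the three cases of Theorem~\ref{3DNBBTP} unless $\alpha=0$.
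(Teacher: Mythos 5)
Your first half (the equivalence LP $\Leftrightarrow$ degenerate torsion for general $n\geq 3$) is correct and is essentially the paper's own argument: expand $\partial\omega$ and $\eta\wedge\partial\overline\eta$ in an admissible frame and compare coefficients of the linearly independent basis $(2,1)$-forms. The observation that $\partial\eta=0$ comes for free from \eqref{eq:1.8} is also right.

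The gap is in the ``in particular'' statement for $n=3$. Your reduction to the two components $T^1_{12}$, $T^2_{12}$ and the use of \eqref{eq:1.14} to kill them when $a_1,a_2\neq 0$ is fine, but your treatment of the borderline case (say $a_2=0$, $a_1=\lambda$) does not work. Your first proposed fix --- invoking Theorem \ref{3DNBBTP} --- is circular: the proof of that classification theorem begins by citing Proposition \ref{BTP_LP} to assert degenerate torsion, so it cannot be used as input here. Your second proposed fix is an unexecuted sketch whose intermediate claims (that a Bianchi computation forces $\theta^b_{22}=0$ and that $\alpha$ is constant) are not justified, and whose final step again appeals to Theorem \ref{3DNBBTP}, so it inherits the same circularity. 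The correct resolution is much simpler and needs no case split on $a_1,a_2$ at all: in an admissible frame $\eta_1=\eta_2=0$ and $T^3_{ij}=0$, so taking the trace defining $\eta$ gives
\[
T^1_{12}=\eta_2-T^2_{22}-T^3_{32}=0,\qquad T^2_{12}=-T^2_{21}=-\bigl(\eta_1-T^1_{11}-T^3_{31}\bigr)=0 ,
\]
which is exactly how the paper disposes of the threefold case. You should also keep your (correct) remark that a balanced LP metric would be K\"ahler, which shows non-K\"ahler GCE threefolds are automatically non-balanced; the paper leaves that direction implicit.
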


\begin{proof}
Let $e$ be an admissible frame on the non-balanced BTP manifold.
From Proposition \ref{prop1.5}, we have
\begin{gather*}
\eta \wedge \partial \overline{\eta}\ =\  \lambda^2 \varphi_n \wedge \sum_{i,j} T_{i n}^j \varphi_i \wedge \overline{\varphi_j}
\ = \ -\lambda^2 \sum_{i < n} T_{i n}^i \varphi_i \wedge\varphi_n \wedge \overline{\varphi_i},\\
-\sqrt{-1}\partial \omega\ =\ ^t\!\tau \wedge \overline{\varphi}
\ =\ \frac{1}{2}\sum_{i,j,k} T_{ik}^j \varphi_i \wedge \varphi_k \wedge \overline{\varphi_j}
\ = \ \sum_{\begin{subarray}{c}i<k\\j<n\end{subarray}}T^j_{ik}\varphi_i \wedge \varphi_k \wedge \overline{\varphi_j}.
\end{gather*}
So the condition $\partial \omega = c \,\eta \wedge \partial \overline{\eta}$ implies $T^{j }_{ik}=0$ for any $i,k <n$.
Conversely, the torsion degeneracy condition $T^{j }_{ik}=0$ for any $i,k <n$ yields
\[-\sqrt{-1}\partial \omega =\sum_{\begin{subarray}{c}i<n\\j<n\end{subarray}} T^j_{in}\varphi_i \wedge \varphi_n \wedge \overline{\varphi_j}
=\sum_{i<n} T^i_{in}\varphi_i \wedge \varphi_n \wedge \overline{\varphi_i}, \]
thus $\partial \omega = c \,\eta \wedge \partial \overline{\eta}$,
which shows that the LP condition is equivalent to the torsion degeneracy condition.

As to the statement that the set of non-balanced BTP threefolds is equal to
the set of non-K\"ahler GCE threefolds when $n=3$, it suffices to show
that any non-balanced BTP threefold $(M^3,g)$ always has degenerate torsion.
To see this, fix a local admissible frame $e$ on $M^3$. It follows that
$\eta_1=\eta_2=0$, $\eta_3=\lambda >0$, and $T^3_{ij}=0$ for $\forall\ i,j$.
Then it yields that
$$ T^1_{12} = \eta_2- T^3_{32} = 0, \ \ \ T^2_{12} = - T^2_{21} = -\eta_1 + T^3_{31} = 0.$$
That is, $T^{\ast}_{12}=0$, so the manifold has degenerate torsion. This completes the proof of Proposition \ref{BTP_LP}.
\end{proof}

\begin{remark}
This proposition actually implies that a Hermitian manifold which is both BKL and GCE is a BKL manifold with degenerate torsion. This was shown in \cite[Theorem 9]{YZZ}.
\end{remark}

Let us prove Propositions \ref{prop1.8} and \ref{prop1.9}. 

\begin{proof}[{\bf Proof of Proposition \ref{prop1.8}.}]
The first statement is due to Andrada and Villacampa \cite{AndV}. Let us give a proof here for readers'convenience and also to benefit our later discussion.
For $\forall\ p\in M$, choose the local unitary frame $e$
so that the matrix $\theta^b$ of Bismut connection vanishes at $p$.
Denote by $\varphi$ the local unitary coframe dual to $e$.
The Levi-Civita connection $\nabla$ has connection matrix
$$ \nabla e = \theta^1 e + \overline{\theta^2} \,\overline{e}, \ \ \ \theta^1=\theta^b -\frac{1}{2}\gamma , \ \ \
\gamma_{ij} = \sum_k \big( T^j_{ik}\varphi_k - \overline{T_{jk}^i } \overline{\varphi_k} \big) , \ \ \ \theta^2_{ij} = \frac{1}{2}\sum_k \overline{T^k_{ij}} \varphi_k.$$
Since $\theta^b|_p=0$, or equivalently $\theta\big|_p = -\gamma\big|_p$, from the structure equation, we get
$$ \frac{1}{\sqrt{-1}} d \omega = d(\,^t\!\varphi \overline{\varphi }) = \,^t\!(\,^t\!\gamma \varphi +\tau )\overline{\varphi }
- \,^t\!\varphi  \overline{( \,^t\!\gamma \varphi +\tau  ) }=  \,^t\!\tau \overline{\varphi } - \,^t\!\varphi  \overline{\tau  }, $$
as $\gamma$ is skew-Hermitian. Thus we always have $\partial \omega = \sqrt{-1} \,^t\!\tau \wedge \overline{\varphi }$.
Let $g$ be a locally conformally K\"ahler metric,
i.e. there is a closed $1$-form $\psi$ such that $d\omega = \psi \wedge \omega$,
where this $\psi$ is necessarily equal to $-\frac{1}{n-1}(\eta +\overline{\eta})$,
so $\partial \omega = -\frac{1}{n-1}\eta \wedge \omega$. This implies that $\tau = -\frac{1}{n-1}\eta \wedge \varphi$,
or equivalently,
\begin{equation}\label{eq:4.11}
T^j_{ik} = \frac{1}{n-1}(\eta_k \delta_{ij} - \eta_i \delta_{kj}), \ \ \ \ \ \forall \ 1\leq i,j,k\leq n.
\end{equation}
Also by $\theta^b|_p=0$, we have $\theta^1\big|_p=-\frac{1}{2}\gamma\big|_p$, thus
\begin{eqnarray*}
\nabla \varphi_i & = & - \theta^1_{ji} \otimes \varphi_j - \theta^2_{ji} \otimes \overline{\varphi}_j \ \ \ = \ \ \  \frac{1}{2}\gamma_{ji} \otimes \varphi_j - \theta^2_{ji} \otimes \overline{\varphi}_j \\
& = & \frac{1}{2}\sum_{j,k} \big( T^i_{jk} \varphi_k\otimes \varphi_j - \overline{T^j_{ik} } \overline{\varphi}_k \otimes \varphi_j -  \overline{T^k_{ji} } \varphi_k \otimes \overline{\varphi}_j  \big) \\
& = & \frac{1}{2(n-1)}\big\{ \eta \otimes \varphi_i - \varphi_i \otimes \eta + \varphi_i \otimes \overline{\eta } - \overline{\eta } \otimes \varphi_i + \overline{\eta}_i \sum_k \big( \overline{\varphi}_k \otimes \varphi_k - \varphi_k \otimes \overline{\varphi}_k \big) \big\} \\
& = & \frac{1}{n-1} \big\{ \eta \wedge \varphi_i + \varphi_i \wedge \overline{\eta} - \overline{\eta}_i \sum_k  \varphi_k\wedge \overline{\varphi}_k \big\}.
\end{eqnarray*}
Therefore
\begin{eqnarray*}
\nabla \eta & = & \nabla \sum_i \eta_i \varphi_i \  \, = \ \, \sum_{i,j} \big( \eta_{i,j} \varphi_j \otimes \varphi_i + \varphi_{i,\overline{j}} \overline{\varphi}_j \otimes \varphi_i \big) + \sum_i \eta_i \nabla \varphi_i \\
 & = & \sum_{i,j} \big( \eta_{i,j} \,\varphi_j \otimes \varphi_i + \eta_{i,\overline{j}} \,\overline{\varphi}_j \otimes \varphi_i \big) + \frac{1}{n-1} \big\{ \eta \wedge  \overline{\eta} - |\eta|^2  \sum_k  \varphi_k\wedge \overline{\varphi}_k \big\} ,
\end{eqnarray*}
where indices after comma denote covariant derivatives with respect to $\nabla^b$.

If $(M^n,g)$ is BTP, then $\nabla^b\eta=0$, hence $\nabla (\eta + \overline{\eta})=0$ and $g$ is Vaisman. Conversely, if $g$ is Vaisman, so we have $\nabla (\eta + \overline{\eta})=0$.
Then from the above calculation, we get
$$ \sum_{i,j} \{  \eta_{i,j} \,\varphi_j \otimes \varphi_i + \eta_{i,\overline{j}} \,\overline{\varphi}_j \otimes \varphi_i +  \overline{\eta_{i,\overline{j}}} \,\varphi_j \otimes \overline{\varphi}_i  + \overline{\eta_{i,j} } \ \overline{\varphi}_j \otimes \overline{\varphi}_i  \}  = 0.$$
Hence $\nabla^b \eta =0$ thus $\nabla^b T=0$ by (\ref{eq:4.11}). So we have proved that for a locally conformally K\"ahler manifold, the BTP condition is equivalent to the Vaisman condition.

For the second statement, let $g$ be a non-balanced BTP metric, so we may now apply the admissible frame $e$. If $g$ is Vaisman, the equality \eqref{eq:4.11} implies
$$ T^i_{in}=\frac{\lambda}{n-1}, \ \ \  \forall \ 1\leq i\leq n-1,$$
that is, $a_1=\cdots =a_{n-1}=\frac{\lambda}{n-1}$. Conversely, assume that $a_1=\cdots =a_{n-1}=\frac{\lambda}{n-1}$. We want to show that $g$ is locally conformally K\"ahler.
By Proposition \ref{prop1.7}, it is clear that $g$ is locally conformally balanced, i.e. $d(\eta+\overline{\eta})=0$, as all $a_i$ are real numbers.
Then it follows that, under the admissible frame $e$,
$$ d\overline{\varphi}_n = \partial \overline{\varphi}_n = \frac{1}{\lambda}  \partial \overline{\eta }
= \frac{\lambda}{n-1} \sum_{i=1}^{n-1} \varphi_i \wedge \overline{\varphi}_i, $$
which implies that $\omega = \sqrt{-1}( \frac{n-1}{\lambda} \partial \overline{\varphi}_n + \varphi_n \wedge \overline{\varphi}_n)$ and
$$ \partial \omega = - \sqrt{-1} \varphi_n \wedge \partial \overline{\varphi}_n
= - \sqrt{-1} \varphi_n \wedge \frac{\lambda}{n-1} ( \frac{1}{\sqrt{-1}} \omega - \varphi_n \wedge \overline{\varphi}_n)
= - \frac{\lambda}{n-1} \varphi_n \wedge \omega = -\frac{1}{n-1}\eta \wedge \omega.$$
That is, $d \omega = -\frac{1}{n-1}(\eta+\overline{\eta}) \wedge \omega$. This means that $g$ is locally conformally K\"ahler and we have
 completed the proof of Proposition \ref{prop1.8}.
\end{proof}

\begin{proof}[{\bf Proof of Proposition \ref{prop1.9}.}]
Suppose $(M^n,g)$ and $(M^n,e^{2u}g)$ are two BTP manifolds,
where $u$ is a smooth real-valued function on $M$ with $du\neq 0$ in an open dense subset $U\subseteq M$.
Let $e$ be a local unitary frame for $g$ with dual coframe $\varphi$.
Then $\tilde{e}=e^{-u}e$ and $\tilde{\varphi} = e^u\varphi$ are local unitary frame and dual coframe for $\tilde{g}=e^{2u}g$,
and we have, by \cite[the proof of Theorem 3]{YZZ},
\begin{eqnarray*}
e^u\tilde{T}^j_{ik} & = & T^j_{ik} + 2(u_i \delta_{kj} - u_k \delta_{ij}) \\
e^u \tilde{\eta}_k & = & \eta_k - 2(n-1)u_k \\
P_{ik} \ \ & = & 2u_i \varphi_k - \partial u \, \delta_{ik} - 2\overline{u_k} \, \overline{\varphi}_i + \overline{\partial}u \, \delta_{ik}
\end{eqnarray*}
where $P=\tilde{\theta}^b-\theta^b$ and $u_k=e_k(u)$. Take the covariant derivative
with respect to $\nabla^b_{\!e_{\ell}}$ of $g$ on both sides of the first equality above,
utilizing $\nabla^bT=0$ and $\tilde{\nabla}^b \tilde{T} =0$, so we get
\begin{equation}
u_{i,\ell}\delta_{kj} - u_{k,\ell} \delta_{ij} = 2u_kT^j_{i\ell} - 2u_i T^j_{k\ell} +2u_iu_{\ell} \delta_{kj} - 2u_k u_{\ell} \delta_{ij} - 2\delta_{j\ell} \sum_r u_r T^r_{ik}   \label{eq:4.12}
\end{equation}
for any indices $1\leq i,j,k,\ell \leq n$ under any local unitary frame $e$. As $du\neq 0$ holds on the open dense subset $U$, so within $U$ we may always choose our local unitary frame $e$ such that $\varphi_n= \frac{\partial u}{|\partial u|_g}$, which implies that $u_1=\cdots =u_{n-1}=0$ and $u_n=|\partial u|_g=f >0$. Assume $i<k$. If we take $j\notin \{ i,k\}$ in (\ref{eq:4.12}), we get
\begin{equation}
\delta_{kn} T^j_{i\ell} = \delta_{j\ell} T^n_{ik},   \qquad  i<k, \ j\neq i, k. \label{eq:4.13}
\end{equation}
Take $k=n$ and $j\neq \ell$, we get $T^j_{i\ell}=0$. Take $k<n$ and $j=\ell$, we get $T^n_{ik}=0$.
Hence under this particular unitary frame $e$, we have $T^j_{ik}=0$ whenever all three indices are distinct.
Now let us take $n=k$ and $j=\ell$ in (\ref{eq:4.13}), we get $T^j_{ij}=T^n_{in}$, for all $i,j<n$ and $i\neq j$.
It implies that $T^{\ell}_{\ell i}$ are equal for $\ell \neq i$ and $i<n$. Therefore
$T^{\ell}_{\ell i} = \frac{1}{n-1} \eta_i $ whenever $i<n$ and $\ell \neq i$, as $\sum_\ell T^{\ell}_{\ell i}=\eta_i$.

Then take $k=\ell =n$ and $i=j<n$ in (\ref{eq:4.12}), so we obtain
$$ -u_{n,n}= 2f T^i_{in} - 2f ^2,\qquad \ \forall \ i<n.$$
This shows that all $T^i_{in}$ for $i<n$ are equal, hence equal to $\frac{1}{n-1}\eta_n$.
Combining the results above, we conclude that under this particular frame $e$, it holds that
\begin{equation}
T^j_{ik} = \frac{1}{n-1} \big( \eta_k \delta_{ij} - \eta_i \delta_{kj}  \big),\qquad  \ \forall \ 1\leq i,j,k\leq n.   \label{eq:4.14}
\end{equation}
It amounts to $\tau= -\frac{1}{n-1} \eta \wedge \varphi$ by use of \eqref{eq:4.11}, or equivalently,
\[d \omega = -\frac{1}{n-1}(\eta + \overline{\eta}) \wedge \omega.\]
Note that the equation above holds on the open dense subset $U\subseteq M$ as this particular unitary frame exists around each point of $U$.
Hence it holds on the entire $M$. Then we calculate that
$$ \phi_i^j = \sum_r \overline{\eta}_r T^j_{ir} = \frac{1}{n-1}\left( \delta_{ij} \sum_r |\eta_r|^2 - \eta_i \overline{\eta}_j \right).$$
In particular, $\phi_i^j = \overline{\phi^i_j}$ for any $i,j$. By Proposition \ref{prop1.5}, we know that $\partial \eta =0$ and
$$ \overline{\partial }\eta + \partial \overline{\eta} = 2\sum_{i,j} \big( \phi_i^j - \overline{\phi^i_j}) \varphi_i\wedge \overline{\varphi}_j = 0,$$
which means $d(\eta + \overline{\eta}) =0$. Therefore $g$ is locally conformally K\"ahler. This completes the proof of Proposition \ref{prop1.9}
\end{proof}

We remark that locally there could be plenty of Vaisman metrics within a conformal class. We have the following

\begin{lemma}\label{LCK_VSM}
Let $(M^n,\tilde{g})$ be a K\"ahler manifold and $u$ a real-valued smooth function on $M$. Then the Hermitian metric $g=e^{2u}\tilde{g}$ is BTP (or equivalently, Vaisman) if and only if $u$ satisfy the following equations
\begin{equation} \label{eq:localVaisman}
 u_{ij} = 2u_iu_j + \sum_k u_k \tilde{\Gamma}^k_{ij}, \ \ \ \ u_{i\bar{j}}=2u_i\overline{u}_j - 2 |\partial u|^2_{\tilde{g}} \tilde{g}_{i\bar{j}}, \ \ \ \ \ \ \forall \ 1\leq i,j\leq n
 \end{equation}
where $(z_1, \ldots  z_n)$ is any local holomorphic coordinate system in $M$, $ \tilde{\Gamma}^k_{ij} = \sum_{\ell =1}^n \tilde{g}_{i\bar{\ell},j} \tilde{g}^{\bar{\ell}k}$ are the connection coefficients for $\tilde{g}$ and the subscripts after the coma stand for partial derivatives.
\end{lemma}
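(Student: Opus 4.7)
The plan is to reduce the BTP condition to the vanishing of the $\nabla^g$-Hessian of $u$, then compute that Hessian via a conformal change of Levi-Civita connections, and finally decompose it into $(2,0)$ and $(1,1)$ types. Since $g=e^{2u}\tilde g$ is globally conformal to the K\"ahler metric $\tilde g$, it is locally conformally K\"ahler, so by Proposition \ref{prop1.8}(1), $g$ is BTP if and only if it is Vaisman. From $\omega=e^{2u}\tilde\omega$ and $d\tilde\omega=0$ one reads off $d\omega=2\,du\wedge\omega$, hence the Lee form of $g$ is $\psi=2\,du$; the Vaisman condition $\nabla^g\psi=0$ is therefore equivalent to the vanishing of the Levi-Civita Hessian $H^g(u)$.

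Next I would apply the classical conformal change formula
\[
\nabla^g_XY=\nabla^{\tilde g}_XY+(Xu)Y+(Yu)X-\tilde g(X,Y)\,\mathrm{grad}_{\tilde g}u,
\]
valid for any conformal change in Riemannian geometry, and evaluate $(\nabla^g_XY)u$ to obtain
\[
H^g(u)(X,Y)=H^{\tilde g}(u)(X,Y)-2(Xu)(Yu)+\tilde g(X,Y)\,|du|^2_{\tilde g}.
\]
Because $\tilde g$ is K\"ahler, its Levi-Civita connection coincides with its Chern connection, so in holomorphic coordinates the only non-vanishing Christoffels are $\tilde\Gamma^k_{ij}$ (as defined in the statement) and their conjugates. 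Consequently the two relevant components of the $\nabla^{\tilde g}$-Hessian are
\[
H^{\tilde g}(u)(\partial_i,\partial_j)=u_{ij}-\sum_k\tilde\Gamma^k_{ij}u_k,\qquad H^{\tilde g}(u)(\partial_i,\partial_{\bar j})=u_{i\bar j}.
\]

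Setting $H^g(u)(\partial_i,\partial_j)=0$ and $H^g(u)(\partial_i,\partial_{\bar j})=0$, and using $\tilde g(\partial_i,\partial_j)=0$, $\tilde g(\partial_i,\partial_{\bar j})=\tilde g_{i\bar j}$, $u_{\bar j}=\overline{u_j}$, together with the identity $|du|^2_{\tilde g}=2|\partial u|^2_{\tilde g}$ for real $u$, one recovers exactly the two equations in \eqref{eq:localVaisman}; the $(0,2)$-type identity is just the conjugate of the $(2,0)$ one, so nothing new comes out. Conversely, once both PDEs hold, the same computations show that $H^g(u)$ vanishes on all three bidegrees, hence $du$, and therefore the Lee form $\psi=2\,du$, is $\nabla^g$-parallel, giving the Vaisman and hence BTP property via Proposition \ref{prop1.8}(1). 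The only place where a reader should pause is the bookkeeping $|du|^2_{\tilde g}=2|\partial u|^2_{\tilde g}$ and the matching factor of two in the $(1,1)$ equation; apart from this convention check the argument is a direct calculation.
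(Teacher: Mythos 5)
Your proof is correct. Every step checks out: since $g=e^{2u}\tilde g$ is globally conformally K\"ahler, Proposition \ref{prop1.8}(1) does reduce BTP to Vaisman; $d\omega=2\,du\wedge\omega$ identifies the Lee form as $\psi=2\,du$ (uniquely, since wedging with $\omega$ is injective on $1$-forms for $n\geq 2$), so Vaisman is exactly $\nabla^g du=0$; the conformal change formula for Levi-Civita connections and the vanishing of the mixed Christoffel symbols of a K\"ahler metric then give precisely the two displayed equations, with $\tilde g(\partial_i,\partial_j)=0$ killing the gradient term in the $(2,0)$ component and $|du|^2_{\tilde g}=2|\partial u|^2_{\tilde g}$ producing the factor $2$ in front of $\tilde g_{i\bar j}$. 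I verified the result against the paper's flat example $u=-\tfrac12\log(|z_1-a|^2+|z_2-b|^2)$, which satisfies your equations.

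The paper actually omits the proof of this lemma, remarking only that it is ``straightforward and analogous to the proofs of Propositions \ref{prop1.8} and \ref{prop1.9}''; the route it has in mind is therefore the Bismut-adapted one used there, namely choosing a unitary frame with $\theta^b|_p=0$, writing the Chern torsion components of $e^{2u}\tilde g$ in terms of $u$ and the K\"ahler data via the conformal change formulas, and imposing $\nabla^b T=0$ directly. Your argument is genuinely different and, to my mind, cleaner for this particular statement: by first invoking Proposition \ref{prop1.8}(1) you never touch the Bismut connection or the torsion again, and everything reduces to the classical Riemannian computation of the Hessian under a conformal change, with the type decomposition handled at the very end. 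What the paper's route buys instead is uniformity (the same frame machinery proves Propositions \ref{prop1.8}, \ref{prop1.9} and this lemma in one sweep) and it produces the torsion components explicitly along the way, which the authors reuse elsewhere. The only bookkeeping points in your version --- the normalization $\psi=2\,du$ against the convention $\psi=-\tfrac{1}{n-1}(\eta+\overline\eta)$, and the identity $|du|^2_{\tilde g}=2|\partial u|^2_{\tilde g}$ --- are exactly the ones you flagged, and both are handled correctly.
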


The proof is straight forward and analogous to the proofs of Propositions \ref{prop1.8} and \ref{prop1.9}, so we will omit it here.
As a particular example, let us consider the flat case when $M\subseteq {\mathbb C}^n$ is a domain and $\tilde{g}=g_0$ is the  Euclidean metric. In this case the equation (\ref{eq:localVaisman}) becomes
$$ u_{ij}=2u_iu_j, \ \ \ u_{i\bar{j}} = 2u_i \overline{u}_j - 2 \delta_{ij} \sum_k |u_k|^2, \ \ \ \ \ \ 1\leq i,j\leq n. $$
When $n=2$, the equation takes the simple form
\begin{equation} \label{eq:localVaisman2}
 u_{ij}=2u_iu_j, \ (1\leq i,j\leq 2), \ \ \ \ u_{1\bar{1}}=-2|u_2|^2, \ \ \ \  u_{1\bar{2}} = 2u_1\overline{u}_2, \ \ \ \ u_{2\bar{2}} = -2 |u_1|^2.
\end{equation}

Clearly, for any constants $a$, $b$, the function $u=-\frac{1}{2}\log (|z_1-a|^2+|z_2-b|^2)$, which is well-defined on ${\mathbb C}^2\setminus \{ (a,b)\}$,
satisfies equation (\ref{eq:localVaisman2}). So for domain  $M\subseteq {\mathbb C}^2$, the metrics
$$g = e^{2u}g_0 = \frac{1}{|z_1-a|^2+|z_2-b|^2}g_0$$
are all BTP (or equivalently, Vaisman), for any choice of $(a,b)\notin M$.


Besides Vaisman manifolds, there are also examples of BTP manifolds amongst complex nilmanifolds, which form an important class of Hermitian manifolds that are often used to test and illustrate theory in non-K\"ahler geometry. See \cite{CFU}, \cite{GZZ}, \cite{LZ}, \cite{Sal}, \cite{Uga}, \cite{VYZ}, \cite{LS} as a sample of examples. Proposition \ref{prop1.10} is analogous to the main result of \cite{ZhaoZ19Nil} which describes Bismut K\"ahler-like (BKL) metrics within the class. Note that the description of the BTP metrics is actually simpler than the BKL metrics in the nilpotent case.

\begin{proof}[{\bf Proof of Proposition \ref{prop1.10}.}]
Let $(M^n,g)$ be a complex nilmanifold, namely, its universal cover is a nilpotent Lie group $G$ equipped with a left-invariant complex structure $J$ and a compatible left-invariant metric $g$. Assume that $J$ is nilpotent in the sense of \cite{CFGU} and $g$ is BTP. By Lemma 1, Lemma 2 and the discussion right after the proof of Lemma 2 in \cite{ZhaoZ19Nil}, we know that $G$ is of step at most 2 and $J$ is abelian (namely, $C=0$), and there exists an integer $1\leq r\leq n$ and a unitary coframe $\varphi$ such that
$$ d\varphi_i =0, \ \ \ \forall \ 1\leq i\leq r; \ \ \ \ \ \ d\varphi_{\alpha} = \sum_{i=1}^r Y_{\alpha i} \varphi_i \wedge \overline{\varphi}_i , \ \ \ \forall \ r<\alpha \leq n. $$
Conversely, for such a complex nilmanifold, it is easy to check that $\nabla^bT=0$, so the metric is BTP.
The characterization of balanced, BKL or Vaisman metric also follows by routine calculation.
This completes the proof of Proposition \ref{prop1.10}. \end{proof}

In particular, $G$  (when not abelian) is a 2-step nilpotent group and $J$ is abelian.
Let us take the $n=3$ case as an example. In this case, either $r=3$, where $G$ is abelian and $g$ is K\"ahler and flat,
or $r=2$ and there exists a unitary coframe $\varphi$ such that
$$ d\varphi_1=d\varphi_2=0, \ \ d\varphi_3 = a\varphi_1\wedge \overline{\varphi}_1 + b \varphi_2\wedge \overline{\varphi}_2, $$
where $a$, $b$ are constants. By a unitary change if necessary, we may assume that $a>0$. Under such a coframe, the metric $g$ is balanced if and only if $b=-a$,
$g$ is BKL if and only if $b\in \sqrt{-1}{\mathbb R}$ is pure imaginary,
while $g$ is Vaisman if and only if $b=a$.
For any other choice of $b$ values, $g$ is a non-balanced, non-BKL, non-Vaisman BTP metric. We will denote the following example by $N^3$,
\begin{equation}     \label{N_3}
d\varphi_1=d\varphi_2=0, \ \ \ \ \ d\varphi_3 = \varphi_1 \wedge \overline{\varphi}_1 - \varphi_2 \wedge \overline{\varphi}_2,
\end{equation}
In particular, starting in complex dimension $3$, there are (compact) BTP manifolds that are balanced (but non-K\"ahler).

Next let us prove Corollary \ref{cor1.11}. The proof is the same as the {\em BKL} case in \cite{ZhaoZ19Str}, and we include it here for the sake of completeness.

\begin{proof}[{\bf Proof of Corollary \ref{cor1.11}: }]
We know from the equality \eqref{eq:1.13} after Proposition \ref{prop1.5} that BTP metrics are always Gauduchon.
If $g$ is strongly Gauduchon, namely, if there exists a $(n,n-2)$-form $\Omega$ on $M^n$ such that $\partial \omega^{n-1}=\overline{\partial }\Omega$, then by the fact that $\partial \eta =0$ and $\partial \omega^{n-1}=-\eta \, \omega^{n-1}$, we get
\[ \int_M \eta \wedge \overline{\eta} \wedge \omega^{n-1} = \int_M \overline{\eta} \wedge \partial  \omega^{n-1} = \int_M \overline{\eta} \wedge \overline{\partial }\Omega = - \int_M \overline{\partial } (\overline{\eta} \wedge \Omega )= 0.\]
Hence $|\eta|^2 = 0$, contradicting to our assumption that $g$ is not balanced. So any compact, non-balanced BTP metric cannot be strongly Gauduchon. Similarly, if $\overline{\eta} = \overline{\partial }f$ for some smooth function $f$ on $M^n$, then
$$ \int_M \partial \overline{\eta} \wedge \omega^{n-1} = \int_M \partial \overline{\partial }f \wedge \omega^{n-1} =0$$
as $g$ is Gauduchon. Utilizing (\ref{eq:1.13}), we get $\eta =0$, which is again a contradiction. So $[\overline{\eta}]$ is not zero in $H^{0,1}(M)$.
Note that the above argument shows that the $(1,1)$-form $\partial \overline{\eta}$ is $\overline{\partial}$-closed and $\partial$-exact,
but cannot be $\partial \overline{\partial}$-exact. So the compact complex manifold $M$ does not satisfy the $\partial \overline{\partial}$-Lemma.
In particular, $M$ cannot admit any K\"ahler metric. This completes the proof of Corollary \ref{cor1.11}. \end{proof}

From the the last equation in (\ref{eq:1.12}), we know that on any BTP manifold it holds that
\begin{equation}
\phi + \phi^{\ast} = B -\mbox{Ric}(Q). \label{eq:4.8}
\end{equation}
Note that the right hand side of (\ref{eq:4.8}) is not necessarily non-negative, so the proof of \cite[the proof of Theorem 3]{ZhaoZ19Str} does not go through,
and that is why we cannot prove Conjecture \ref{conj1.12} here for non-balanced BTP manifolds.
If $B \geq \mbox{Ric}(Q)$, or equivalently, by the statement \eqref{eq:1.11} in Proposition \ref{prop1.5},
all the eigenvalues of the tensor $\phi$ have non-negative real parts, then Conjecture \ref{conj1.12} holds.

To finish this section, let us prove Proposition \ref{RicQ}.

\begin{proof}[\bf{Proof of Proposition \ref{RicQ}:}]
Let $(M^n,g)$ be a BTP manifold of complex dimension $n$ satisfying the condition $\mathrm{Ric}(Q)=0$, which is equivalent to $B=\phi + \phi^{\ast}$ by (\ref{eq:1.12}).  By the equality \eqref{eq:1.7} in Proposition \ref{prop1.5}, $Q=-P$ where
\[ P^{j\ell}_{\,ik} = \sum_r \big( T^r_{ik}\overline{T^r_{j\ell} } + T^j_{ir}\overline{T^k_{\ell r} } + T^{\ell}_{kr}\overline{T^i_{jr} }
- T^{\ell}_{ir}\overline{T^k_{jr} } - T^j_{kr}\overline{T^i_{\ell r} } \big).\]
Under any admissible frame $e$, we have $T^n_{ij}=0$ and $T^j_{in}=\delta_{ij}a_i$ for any $1\leq i,j\leq n$. So we get
\[P_{\,ik}^{jn}=(\overline{a}_j - \overline{a}_k - \overline{a}_i) T_{ik}^j=0, \ \ \ \ \forall \ 1\leq i,j,k\leq n,\]
where the last equality is due to (\ref{eq:1.10}). Therefore $P_{\,ik}^{j\ell}=0$ when any one of the four indices is $n$.
So when $n=3$,  the only possibly non-trivial components of $P$ is $P^{12}_{\,12}$.
But by the assumption $\mathrm{Ric}(Q)=0$, we have
\[0=\mathrm{Ric}(Q)_{1\bar{1}}=-\sum_r P_{\,1r}^{1r}=-P_{\,12}^{12},\]
therefore $P^{12}_{\,12}=0$ as well, so $P=0$ hence $Q=0$.

Now assume that $n=4$. Again under any admissible frame $e$, we have $P_{\,ik}^{j\ell}=0$ when any one of the four indices is $4$. So $P_{\,ik}^{j\ell}=0$ unless $i,j,k, \ell \leq 3$ and $i\neq k$, $j\neq \ell$. For any $1\leq i\neq j\leq 3$, let $k$ be the integer such that $\{ i,j,k\} = \{ 1,2,3\}$. Then we have
$$ 0 = \mathrm{Ric}(Q)_{i\bar{j}}=-\sum_r P_{\,ir}^{jr}=-P_{\,ik}^{jk}  .$$
Therefore whenever  the cardinality of the set $\{ i,j,k,\ell\} \subseteq  \{ 1,2,3\}$ is $3$ we will have $P_{\,ik}^{j\ell}=0$. That is, the only possibly non-zero components of $P$ are $P_{\,ik}^{ik}=S_{ik}$, $1\leq i\neq k\leq 3$. Clearly $S_{ik}=S_{ki}$. The assumption $\mathrm{Ric}(Q)=0$ now says that
$$ S_{12}+S_{13}= S_{12}+S_{23} = S_{13}+S_{23} = 0. $$
This implies that  $S_{12}=S_{13}=S_{23}=0$. So $P=0$ and $Q=0$.

When $n\geq 5$, however, the condition $\mathrm{Ric}(Q)=0$ no longer implies $Q=0$ for BTP manifolds in general. To illustrate this point, we will construct a $5$-dimensional BTP nilmanifold which satisfies $\mathrm{Ric}(Q)=0$ but with $Q \not\equiv 0$.
In light of Proposition \ref{prop1.10}, it is easy to check that $\mathrm{Ric}(Q)=0$ is equivalent to
\begin{equation}\label{RicQ_NL}
\sum_{\begin{subarray}{c} k=1 \\ k \neq i \end{subarray}}^r \sum_{\alpha = r+1}^n \{
Y_{\alpha i} \overline{Y_{\alpha k}} + \overline{Y_{\alpha i}} Y_{\alpha k} \} =0,\qquad \forall\ 1 \leq i \leq r.
\end{equation}
Let $r=4$ and $n=5$ to get a nilpotent Lie algebra endowed with a nilpotent complex structure by
\[\begin{cases}
d\varphi_i=0,   \qquad  \forall \ 1\leq i\leq 4, \\
d\varphi_{5} = \sum\limits_{i=1}^4Y_i\,\varphi_i \wedge \overline{\varphi}_i,
\end{cases}\]
where $Y_1=1+2\sqrt{-1}$, $Y_2=1-\sqrt{-1}$, $Y_3=1-\sqrt{-1}$ and $Y_4=\sqrt{-1}$.
It is obvious from the structure equation above the Lie algebra admits a rational structure
as the real and imaginary parts of $Y_1, Y_2, Y_3$ and $Y_4$ are rational,
so the example admits compact quotients by Malcev's Lemma thus determines a nilmanifold.
Clearly this example satisfies the equality \eqref{RicQ_NL} thus $\mathrm{Ric}(Q)=0$,
while $Y_1\overline{Y}_2 + \overline{Y_1}Y_2 \neq 0$ which implies that $Q \neq 0$.
\end{proof}

\vspace{0.4cm}

\section{Non-balanced BTP threefolds}\label{NBBTP}

In this section we give a classification for non-balanced BTP threefolds. First let us recall the following definitions regarding \textit{Sasakian} manifolds, the \textit{standard Hermitian structure} and the \textit{modified ones} on the product of two Sasakian manifolds.

\begin{definition}\label{SSK}
Let $(N^{2m+1},g,\xi)$ be a \textit{Sasakian} manifold, that is, as defined in \cite{YZZ} for instance, an odd dimensional Riemannian manifold $(N,g)$ equipped with a Killing vector field $\xi$ of unit length, such that:
\begin{enumerate}
\item The tensor field $\frac{1}{c}\nabla \xi$ gives an integrable orthogonal complex structure $I$ on the distribution $H$, where $c>0$ is a constant and $H$ is the perpendicular complement of $\xi $ in the tangent bundle $TN$.
\item Denote by $\phi $ the $1$-form dual to $\xi$, namely, $\phi (X) = g(X, \xi)$ for any $X$, then $\phi \wedge (d\phi )^m$ is nowhere zero.
That is, $\phi$ gives a contact structure on $N$.
\end{enumerate}
\end{definition}

\begin{definition}[Belgun \cite{B12}]\label{SSKproduct}
Let $(N_1^{2n_1+1}, g_1, \xi_1)$ and $(N_2^{2n_2+1}, g_2, \xi_2)$ be two Sasakian manifolds. On the product Riemannian manifold $M=N_1\times N_2$, of even dimension $2n=2n_1+2n_2+2$,
for $\kappa \in \{\kappa \in \mathbb{C} \, \big| \, \mathrm{Im} (\kappa) > 0\}$, a natural almost complex structure $J_{\kappa}$ can be defined by
\begin{gather*}
J_{\kappa}\xi_1 = \mathrm{Re}(\kappa) \xi_1 + \mathrm{Im}(\kappa)\xi_2, \\
J_{\kappa}X_i = J_i X_i, \quad \forall \ X_i \in H_i , \ i=1,2.
\end{gather*}
One can also define an associated metric $g_{\kappa}$ by
\begin{gather*}
g_{\kappa}(\xi_1,\xi_1)= g_{\kappa}(J_{\kappa}\xi_1,J_{\kappa}\xi_1)=1, \quad g_{\kappa}(\xi_1,J_{\kappa}\xi_1)=0,\\
g_{\kappa}(\xi_i,X)=0,\quad \forall \ X \in H_1 \oplus H_2, \ i=1,2.\\
g_{\kappa}(X_i,Y_i)=g_i(X_i,Y_i), \quad g_{\kappa}(X_1,X_2)=0,\quad \forall \ X_i, Y_i \in H_i,\ i=1,2.
\end{gather*}
It is easy to verify that $g_{\kappa}$ is $J_{\kappa}$-Hermitian metric and $J_{\kappa}$ is integrable.
When $\kappa=y\sqrt{-1}$ for $y>0$, the Hermitian manifold defined above is the \textit{standard Hermitian structure on the product of two Sasakian manifolds} with $g_{\kappa}=g_1 \times \frac{g_2}{y^2}$, or \textit{standard Sasakian product}.
In general, we will call the modified Hermitian manifold $(N_1\times N_2, g_{\kappa}, J_{\kappa})$ \textit{twisted product of two Sasakian manifolds}, or \textit{twisted Sasakian product},
and denote it by $N_1 \times_{\kappa} N_2$.
\end{definition}
\begin{lemma}\label{twisted}
Let $(N_1,g_1,\xi_1),(N_2,g_2,\xi_2)$ be two $3$-dimensional Sasakian manifolds
and $\kappa = x + y \sqrt{-1}$ for $y >0$ and $x,y \in\mathbb{R}$.
Then there exists a natural unitary frame $(e_1,e_2,e_3)$, with the dual coframe $(\varphi_1,\varphi_2,\varphi_3)$,
on the twisted Sasakian product $N_1 \times_{\kappa} N_2$,
such that the Levi-Civita connection $\nabla$ under $e$ takes the form
\begin{equation}\label{eq_twSSK}
\begin{cases}
\nabla e_1 = c_1\sqrt{-1} \overline{\varphi}_1\, \xi_1 + \sigma_1\, e_1 \\
\nabla e_2 = c_2\sqrt{-1} \overline{\varphi}_2\, \xi_2 + \sigma_2\, e_2 \\
\nabla e_3 = \frac{c_1\sqrt{-1}}{\sqrt{2}}(\varphi_1\, e_1 - \overline{\varphi}_1 \, \overline{e}_1)
+ \frac{c_2 (1-x\sqrt{-1})}{y\sqrt{2}} (\varphi_2\, e_2 - \overline{\varphi}_2 \, \overline{e}_2),
\end{cases}
\end{equation}
where $e_3 = \frac{1}{\sqrt{2}}(\xi_1 - \sqrt{-1}(x\xi_1 + y \xi_2))$, $c_i$ is the defining constant of the Sasakian manifold $N_i$, and $\sigma_i$ is a $1$-form satisfying $\overline{\sigma}_i = - \sigma_i$, for $i=1,2$.
\end{lemma}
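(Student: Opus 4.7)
The plan is to build the frame from the Sasakian data on each factor and then read off the Levi-Civita components of $g_\kappa$ via the Koszul formula, using the $g_\kappa$-orthogonal decomposition $TM=H_1\oplus H_2\oplus\langle\xi_1\rangle\oplus\langle\tilde{\xi}_2\rangle$ with $\tilde{\xi}_2:=J_\kappa\xi_1=x\xi_1+y\xi_2$. Each $H_i$ is oriented by the Sasakian complex structure $I_i$ and admits a local unit section $u_i$; I would set $e_i:=\tfrac{1}{\sqrt{2}}(u_i-\sqrt{-1}\,I_iu_i)$ for $i=1,2$ and take $e_3$ as in the statement. The triple is $J_\kappa$-$(1,0)$ by construction, and unitarity for $g_\kappa$ follows directly from Belgun's definition: $g_\kappa|_{H_i}=g_i$, $g_\kappa(\xi_1,\xi_1)=g_\kappa(\tilde{\xi}_2,\tilde{\xi}_2)=1$, $g_\kappa(\xi_1,\tilde{\xi}_2)=0$, and $H_1\oplus H_2$ is orthogonal to the vertical.

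Before computing the connection I would record the structural inputs: (a) $\tilde{\xi}_2$ is a unit $g_\kappa$-Killing field commuting with $\xi_1$, being a constant-coefficient sum of the factor Killing fields and using $[\xi_1,\xi_2]=0$; (b) any two vector fields coming from different factors have vanishing Lie bracket, so $[H_1,H_2]=0$ and $[H_i,\xi_j]=0$ for $i\neq j$; (c) the Sasakian identity $\nabla^i_X\xi_i=c_iI_iX$ on $H_i$ and $\nabla^i_{\xi_i}\xi_i=0$ yield $\nabla^i_{e_i}\xi_i=c_i\sqrt{-1}\,e_i$, $\nabla^i_{\overline{e}_i}\xi_i=-c_i\sqrt{-1}\,\overline{e}_i$, and $g_i([\overline{e}_i,e_i],\xi_i)=2c_i\sqrt{-1}$; (d) a short algebraic computation from the definition of $g_\kappa$ gives the mixed vertical products $g_\kappa(\xi_1,\xi_2)=-x/y$, $g_\kappa(\xi_2,\xi_2)=(1+x^2)/y^2$, $g_\kappa(\xi_2,\tilde{\xi}_2)=1/y$.

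With these inputs in hand, I would run the Koszul formula for each ordered pair from the frame. When $X,Y,Z$ all lie in $H_i$, Koszul for $g_\kappa$ reduces to Koszul for $g_i$, so the $H_i$-projections of $\nabla^\kappa e_i$ agree with those of $\nabla^i e_i$ and assemble into a single $\mathfrak{u}(1)$-valued one-form $\sigma_i$; the identity $\overline{\sigma_i}=-\sigma_i$ is forced by $|e_i|_{g_\kappa}\equiv 1$. For the vertical components of $\nabla^\kappa_{\overline{e}_i}e_i$, one pairs the Koszul formula against $\xi_1$ and against $\tilde{\xi}_2$; the first three Koszul terms vanish (constant metric coefficients, trivial vertical-horizontal pairings), the two brackets involving Killing vectors from the opposite factor vanish by (b), and only the single term $g_\kappa(\xi_\ast,[\overline{e}_i,e_i])$ with $[\overline{e}_i,e_i]\in TN_i$ survives. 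For $i=1$, pairing against $\xi_1$ produces $c_1\sqrt{-1}$ while pairing against $\tilde{\xi}_2$ collapses to $x\cdot c_1\sqrt{-1}+y\cdot(-xc_1\sqrt{-1}/y)=0$ thanks to (d), yielding $\nabla^\kappa_{\overline{e}_1}e_1=c_1\sqrt{-1}\,\xi_1+\sigma_1(\overline{e}_1)e_1$. For $i=2$, both pairings are nonzero in the naive basis, but rewriting the result in the $g_\kappa$-orthonormal frame $\{\xi_1,\tilde{\xi}_2\}$ and then re-expanding via $\xi_2=\tfrac{1}{y}(\tilde{\xi}_2-x\xi_1)$ makes the combination collapse to exactly $c_2\sqrt{-1}\,\xi_2+\sigma_2(\overline{e}_2)e_2$, producing the second line of \eqref{eq_twSSK}.

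Finally, the expression for $\nabla e_3$ follows from $\nabla e_3=\tfrac{1}{\sqrt{2}}(\nabla\xi_1-\sqrt{-1}\,\nabla\tilde{\xi}_2)$: the Killing identity $g_\kappa(\nabla^\kappa_XV,W)=-g_\kappa(\nabla^\kappa_XW,V)$ for $V\in\{\xi_1,\tilde{\xi}_2\}$ converts the horizontal components of $\nabla\xi_1$ and $\nabla\tilde{\xi}_2$ into the (sign-reversed) duals of the vertical components computed above, and the factor $c_2(1-x\sqrt{-1})/(y\sqrt{2})$ appearing in the last line of \eqref{eq_twSSK} is precisely the coefficient that arises when one rewrites $c_2\sqrt{-1}\,\xi_2$ in the $(e_3,\overline{e}_3)$-basis. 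The main obstacle I foresee is the bookkeeping in the $i=2$ case: the $\kappa$-twist genuinely couples $\xi_1$ and $\xi_2$ inside $g_\kappa$, so the vertical components of $\nabla^\kappa_{\overline{e}_2}e_2$ in the naive basis look asymmetric between the two factors, and one has to carry out the linear-algebra simplification with the mixed pairings in (d) carefully to see that everything recombines into the clean statement of the lemma.
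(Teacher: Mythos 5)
Your proposal is correct in substance but takes a genuinely different computational route from the paper. You determine the Levi-Civita connection of $g_\kappa$ via the Koszul formula, feeding in the orthogonal splitting $H_1\oplus H_2\oplus\langle \xi_1\rangle\oplus\langle J_\kappa\xi_1\rangle$, the vanishing of brackets across the two factors, the Killing property of $\xi_1,\xi_2$, and the mixed pairings $g_\kappa(\xi_1,\xi_2)=-x/y$, $g_\kappa(\xi_2,J_\kappa\xi_1)=1/y$; the key evaluations (the collapse of the vertical part of $\nabla_{\overline{e}_2}e_2$ to $c_2\sqrt{-1}\,\xi_2$ after re-expanding $\xi_2=\frac{1}{y}(J_\kappa\xi_1-x\xi_1)$, and the coefficient $c_2(1-x\sqrt{-1})/(y\sqrt 2)$ in $\nabla e_3$ obtained from metric compatibility) do come out right. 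The paper instead works entirely with the coframe: it writes the Sasakian structure equations of each factor in matrix form, uses torsion-freeness to compute $d\varphi_1,d\varphi_2,d\varphi_3$ on the product, and reads off $\theta_1$ and $\theta_2$ from the unique decomposition $d\varphi=\overline{\theta}_1\varphi+\theta_2\overline{\varphi}$ with $\theta_1$ skew-Hermitian and $\theta_2$ skew-symmetric. The paper's route packages all components at once and directly exhibits $\theta_2$, i.e.\ the torsion, which is what is actually exploited in the proof of Theorem \ref{3DNBBTP}; your route is more elementary and makes transparent why the twist deforms only the $N_2$-coefficients, but it forces a component-by-component verification and requires the frame fields to be lifts from the factors. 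One small imprecision: in your Koszul evaluation the bracket terms $g_\kappa([\overline{e}_i,\xi_i],e_i)$ and $g_\kappa([e_i,\xi_i],\overline{e}_i)$ involve the Killing field of the \emph{same} factor, so they do not vanish individually by your item (b); they cancel as a pair because $\xi_i$ is Killing and $g_\kappa(\overline{e}_i,e_i)$ is constant. This does not affect the conclusion.
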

\begin{proof}
From Definition \ref{SSK}, for $i=1,2$, we may assume that $(\xi_i,e_i,\overline{e}_i)$ be a frame of the Sasakian manifold $(N_i, g_i, \xi_i)$ such that $e_i$ is a unitary frame of the distribution $\xi_i^{\bot}$ with respect to the orthogonal complex structure $\frac{1}{c_i} \nabla \xi_i$, which yields the following structure equation
\[ \nabla \! \begin{bmatrix} \xi_i \\ e_i \\ \overline{e}_i \end{bmatrix}=
\begin{bmatrix} 0 & c_i \sqrt{-1} \varphi_i & - c_i \sqrt{-1} \overline{\varphi}_i \\
c_i \sqrt{-1} \overline{\varphi}_i & \sigma'_i & 0  \\
-c_i \sqrt{-1} \varphi_i  & 0 & \overline{\sigma}'_i  \\ \end{bmatrix}\!\!
\begin{bmatrix} \xi_i \\ e_i \\ \overline{e}_i \\ \end{bmatrix}\!\!,\]
where $\sigma'_i$ is a $1$-form satisfying $\overline{\sigma}'_i = - \sigma'_i$ and the dual frame of $(\xi_i,e_i,\overline{e}_i)$ is denoted by $(\phi_i,\varphi_i,\overline{\varphi}_i)$. The dual version of the equation above is
\[ \nabla \! \begin{bmatrix} \phi_i \\ \varphi_i \\ \overline{\varphi}_i \end{bmatrix}=
\begin{bmatrix} 0 & - c_i \sqrt{-1} \overline{\varphi}_i & c_i \sqrt{-1} \varphi_i \\
-c_i \sqrt{-1} \varphi_i & \overline{\sigma}'_i & 0 \\
c_i \sqrt{-1} \overline{\varphi}_i & 0 & \sigma'_i  \\ \end{bmatrix}\!\!
\begin{bmatrix} \phi_i \\ \varphi_i \\ \overline{\varphi}_i \\ \end{bmatrix}\!\!.\]
As $\nabla$ is torsion free, it follows that
\[ d \!  \begin{bmatrix} \phi_i \\ \varphi_i \\ \overline{\varphi}_i \end{bmatrix}=
\begin{bmatrix} 0 & - c_i \sqrt{-1} \overline{\varphi}_i & c_i \sqrt{-1} \varphi_i \\
-c_i \sqrt{-1} \varphi_i & \overline{\sigma}'_i & 0 \\
c_i \sqrt{-1} \overline{\varphi}_i & 0 & \sigma'_i  \\ \end{bmatrix}\!\!
\begin{bmatrix} \phi_i \\ \varphi_i \\ \overline{\varphi}_i \\ \end{bmatrix}\!\!.\]

From Definition \ref{SSKproduct}, we may define
\begin{gather*}
e_3 = \frac{1}{\sqrt{2}}(\xi_1 - \sqrt{-1}J_{\kappa}\xi_1 ) = \frac{1}{\sqrt{2}}\big((1-x\sqrt{-1})\xi_1 - y\sqrt{-1} \xi_2 \big),\\
\varphi_3 = \frac{1}{\sqrt{2}}(\phi_1 - \frac{x}{y} \phi_2 + \frac{\sqrt{-1}}{y} \phi_2 ) =  \frac{1}{\sqrt{2}}(\phi_1 - \frac{x-\sqrt{-1}}{y} \phi_2 ).
\end{gather*}
It is easy to verify that $(e_1,e_2,e_3)$ is a unitary frame on the twisted Sasakian product $N_1 \times_{\kappa} N_2$ and  $(\varphi_1,\varphi_2,\varphi_3)$ is its dual coframe. Then it yields that
\[ \begin{cases}
d \varphi_1 = \frac{c_1}{\sqrt{2}}\big( (x + \sqrt{-1}) \varphi_3 - (x-\sqrt{-1}) \overline{\varphi}_3 \big)\varphi_1
+ \overline{\sigma}'_1 \varphi_1, \\
d \varphi_2 = \frac{c_2y}{\sqrt{2}}(\varphi_3 - \overline{\varphi}_3) \varphi_2 + \overline{\sigma}'_2 \varphi_2, \\
d \varphi_3 = \sqrt{2} c_1 \sqrt{-1} \varphi_1 \overline{\varphi}_1 - \frac{\sqrt{2}c_2 (1+x\sqrt{-1})}{y} \varphi_2 \overline{\varphi}_2.
\end{cases} \]
In matrix form we have $d \varphi = \overline{\theta}_1 \varphi + \theta_2 \overline{\varphi}$, where $\theta_1$ is skew Hermitian and $\theta_2$ is skew symmetric, given by
\begin{gather*}
\theta_1= \begin{bmatrix} \sigma_1 & 0 & \frac{c_1\sqrt{-1}}{\sqrt{2}} \overline{\varphi}_1  \\
                            0 & \sigma_2 & \frac{-c_2(1+x\sqrt{-1})}{y\sqrt{2}} \overline{\varphi}_2 \\
                            \frac{c_1\sqrt{-1}}{\sqrt{2}} \varphi_1 & \frac{c_2(1-x\sqrt{-1})}{y\sqrt{2}} \varphi_2 & 0 \\
                            \end{bmatrix}\!\!,\quad
\theta_2 = \begin{bmatrix} 0 & 0 & \frac{-c_1\sqrt{-1}}{\sqrt{2}} \varphi_1 \\
                           0 & 0 & \frac{c_2(1+x\sqrt{-1})}{y\sqrt{2}} \varphi_2 \\
                           \frac{c_1 \sqrt{-1}}{\sqrt{2}} \varphi_1 & \frac{-c_2(1+x\sqrt{-1})}{\sqrt{2}} \varphi_2 & 0 \\ \end{bmatrix}\!\!,\\
\sigma_1 = \sigma_1'-\frac{c_1 x}{\sqrt{2}}(\varphi_3-\overline{\varphi}_3),\qquad
\sigma_2 = \sigma_2' - \frac{c_2}{\sqrt{2}}\big((y-\frac{1-x\sqrt{-1}}{y})\varphi_3 - (y - \frac{1+x\sqrt{-1}}{y}) \overline{\varphi}_3 \big).
\end{gather*}
Note that $\nabla e = \theta_1 e + \overline{\theta}_2 \overline{e}$, $\xi_1 = \frac{e_3 + \overline{e}_3}{\sqrt{2}}$ and $\sqrt{-1}\xi_2 =  \frac{-(1+x\sqrt{-1})}{y\sqrt{2}} e_3 +  \frac{(1-x\sqrt{-1})}{y\sqrt{2}} \overline{e}_3$. Then the equation \eqref{eq_twSSK} in the lemma is established.
\end{proof}

\begin{proof}[{\bf Proof of Theorem \ref{3DNBBTP}:}]
Let $(M^3,g)$ be a non-balanced BTP manifold. By Proposition \ref{prop1.7}, there always exist admissible frames.
Let $e$ be an admissible frame. Then the Bismut connection matrix $\theta^b$ under $e$ satisfies $\theta^b_{3\ast} = \theta^b_{\ast 3}=0$.
The eigenvalues of $\phi$ are $\{ \lambda a_1, \lambda a_2, 0\}$.
For simplicity of the writing below, let us write $2a=a_1$ and $2b=a_2$. We have $a+b=\frac{1}{2}\lambda$.
As the torsion is degenerate by Proposition \ref{BTP_LP} when $n=3$,
the only possibly non-zero Chern torsion components under $e$ are
\[T_{13}^1=2a,\quad T^2_{23}=2b.\]
Let us start with Case \eqref{twSSK} in Theorem \ref{3DNBBTP}, which means $a\bar{b}+\bar{a}b\neq 0$, $a\bar{b}-\bar{a}b\neq 0$. Clearly, in this case  $a,b\neq 0$ and $a \neq b$. So the connection matrix $\theta^b$  under $e$ is diagonal, denoted by
\[\theta^b = \begin{bmatrix} \psi_1 & 0 & 0 \\
                               0    & \psi_2 & 0 \\
                               0  & 0 & 0 \\ \end{bmatrix}\!\!,\]
where $\overline{\psi}_1 = - \psi_1$ and $\overline{\psi}_2 = - \psi_2$.
Since the connection matrix of the Levi-Civita connection $\nabla$ is given by $\nabla e = \theta_1 e + \overline{\theta}_2 \overline{e}$, where $\theta_1 = \theta^b - \frac{1}{2}\gamma$ and $\gamma,\theta_2$ are determined by the Chern torsion $T$, we have
\begin{equation}\label{generic}
\begin{cases}
\nabla e_1 =  \overline{\varphi}_1 \, (a \overline{e}_3 - \overline{a} e_3 ) + \sigma_1 \, e_1 \\
\nabla e_2 = \overline{\varphi}_2\,(b \overline{e}_3 - \overline{b} \overline{e}_3) + \sigma_2 \, e_2 \\
\nabla e_3 = a (\varphi_1\, e_1 - \overline{\varphi}_1 \, \overline{e}_1)
+ b (\varphi_2\, e_2 - \overline{\varphi}_2 \, \overline{e}_2),
\end{cases}
\end{equation}
where $\varphi$ is the dual coframe of $e$, and $\sigma_1 = \psi_1 - a \varphi_3 + \overline{a} \overline{\varphi}_3$, $\sigma_2 = \psi_2 - b \varphi_3 + \overline{b} \overline{\varphi}_3$. Then we may define two global real vector fields of unit length by
\[ \xi_1 = \frac{-\sqrt{-1}}{\sqrt{2}|a|}(a \overline{e}_3 - \overline{a} e_3 ),\quad
\xi_2 = \frac{- \sqrt{-1}}{\sqrt{2}|b|}(b \overline{e}_3 - \overline{b} e_3 ),\]
where it is easy to verify from the Case \eqref{twSSK} assumption on $a$, $b$ that $\mbox{span}_{\mathbb{C}} \{e_3,\overline{e}_3\} = \mbox{span}_{\mathbb{C}} \{\xi_1,\xi_2\}$, and it holds that
\[e_3 = \frac{\sqrt{2}|a|b \sqrt{-1}}{a\overline{b}-\overline{a}b} \xi_1
- \frac{\sqrt{2} |b| a \sqrt{-1}}{a\overline{b} - \overline{a} b} \xi_2,\quad
J \xi_1 = \frac{-(a\overline{b}+\overline{a}b)\sqrt{-1}}{a\overline{b}-\overline{a}b} \xi_1
+ \frac{2|a||b|\sqrt{-1}}{a\overline{b}-\overline{a}b} \xi_2. \]
Then it is obvious that, for $i=1,2$, the distributions $E_i$ generated by $\{\mathrm{Re}(e_i),\mathrm{Im}(e_i),\xi_i\}$ are globally defined on $M$ and they are both foliations, since the equation \eqref{generic} implies $[e_i,\overline{e}_i] \in E_i$ and $[e_i,\xi_i] \in E_i$.
So the universal covering space $\widetilde{M}$ of $M$ is a product manifold $N_1 \times N_2$, where $N_i$ is the integral manifold of the foliation $E_i$.
On the other hand, since neither $E_1$ nor $E_2$ are necessarily $\nabla$-parallel, $\widetilde{M}$ is not a metric product in general.
To simplify the writing, let us denote by
\begin{gather*}
x=\frac{-(a\overline{b}+\overline{a}b)\sqrt{-1}}{a\overline{b}-\overline{a}b},\quad
y=\frac{2|a||b|\sqrt{-1}}{a\overline{b}-\overline{a}b}, \quad \kappa = x + y\sqrt{-1}, \\
e'_3 = \frac{1}{\sqrt{2}}(\xi_1 - \sqrt{-1}J\xi_1)= \frac{1}{\sqrt{2}}\big((1-x\sqrt{-1})\xi_1 - y\sqrt{-1} \xi_2 \big).
\end{gather*}
It is easy to see that $x,y$ are real constants and $e'_3=\frac{\sqrt{-1} \overline{a}}{|a|}e_3$.
Then the first two equations of \eqref{generic} enable us to establish two Sasakian structure on $N_1$ and $N_2$ by the following two structure equations respectively,
\[ \nabla \! \begin{bmatrix} \xi_1 \\ e_1 \\ \overline{e}_1 \end{bmatrix}=
\begin{bmatrix} 0 & \sqrt{2}|a| \sqrt{-1} \varphi_1 & - \sqrt{2}|a| \sqrt{-1} \overline{\varphi}_1 \\
\sqrt{2}|a| \sqrt{-1} \overline{\varphi}_1 & \sigma'_1 & 0  \\
-\sqrt{2}|a| \sqrt{-1} \varphi_1  & 0 & \overline{\sigma}'_1  \\ \end{bmatrix}\!\!
\begin{bmatrix} \xi_1 \\ e_1 \\ \overline{e}_1 \\ \end{bmatrix}\!\!,\]
\[ \nabla \! \begin{bmatrix} \xi_2 \\ e_2 \\ \overline{e}_2 \end{bmatrix}=
\begin{bmatrix} 0 & \sqrt{2}|b| \sqrt{-1} \varphi_2 & - \sqrt{2}|b| \sqrt{-1} \overline{\varphi}_2 \\
\sqrt{2}|b| \sqrt{-1} \overline{\varphi}_2 & \sigma'_2 & 0  \\
-\sqrt{2}|b| \sqrt{-1} \varphi_2  & 0 & \overline{\sigma}'_2  \\ \end{bmatrix}\!\!
\begin{bmatrix} \xi_2 \\ e_2 \\ \overline{e}_2 \\ \end{bmatrix}\!\!,\]
where the dual coframe of $(\xi_i,e_i,\overline{e}_i)$ is denoted by $(\phi_i,\varphi_i,\overline{\varphi}_i)$ for $i=1,2$, and we may also define
\begin{gather*}
\varphi'_3 = \frac{1}{\sqrt{2}}(\phi_1 - \frac{x}{y} \phi_2 + \frac{\sqrt{-1}}{y} \phi_2 ) =  \frac{1}{\sqrt{2}}(\phi_1 - \frac{x-\sqrt{-1}}{y} \phi_2 ),\\
\sigma_1'= \sigma_1 + x|a|(\varphi'_3 - \overline{\varphi'_3}),\quad \sigma'_2 = \sigma_2 + |b| \big( (y- \frac{1-x\sqrt{-1}}{y}) \varphi'_3 - (y-\frac{1+x\sqrt{-1}}{y}) \overline{\varphi'_3} \big).
\end{gather*}
Here $\sigma_1, \sigma_2$ are coefficient forms from the equation \eqref{generic}. If $y<0$, we may replace $\xi_2$ and $e_2$ by $-\xi_2$ and $\overline{e}_2$ respectively to obtain a Sasakian 3-manifold opposite to $N_2$ above
so that $y$ changes the sign. Hence it follows from Lemma \ref{twisted} that the structure equation of $N_1 \times_{\kappa} N_2$ is exactly the one \eqref{generic},
after we note that $c_1 = \sqrt{2}|a|, c_2 = \sqrt{2}|b|, e'_3=\frac{\sqrt{-1} \overline{a}}{|a|}e_3$ here. Therefore, $\widetilde{M}$ is holomorphically isomorphic to $N_1 \times_{\kappa} N_2$ and we have shown the Case \eqref{twSSK}.

Next we consider Case \eqref{BKL}. In this case, either one of $a$, $b$ is zero, for instance $a=0$ and $b=\frac{\lambda}{2}$, or $a=\frac{\lambda}{4}(1+\rho)$, $b=\frac{\lambda}{4}(1-\rho)$ for some $\rho$ satisfying $|\rho|=1$ and $\mathrm{Im}(\rho)>0$.
It is easy to verify that $P_{ik}^{j \ell}=0$ under the admissible frame $e$ in this case, thus $(M^3,g)$ is BKL by Proposition \ref{prop3.6}.
The universal covering space $\widetilde{M}$ is holomorphically isomorphic to either the product of a K\"ahler curve and a BKL surface, or the product of two Sasakian $3$-manifolds as shown in \cite[Theorem 7]{YZZ}.

Finally in Case \eqref{gV} which means $a$ and $b$ satisfies $a\bar{b}+\bar{a}b\neq 0$ and $a\bar{b}-\bar{a}b = 0$.
In this case both $a$ and $b$ are non-zero, with $a\bar{b}$ real. Since $a+b=\frac{\lambda}{2}>0$, both $a$, $b$ are real (and non-zero).
Conversely, when both $a$, $b$ are real and non-zero, we certainly have $a\bar{b}+\bar{a}b\neq 0$ and $a\bar{b}-\bar{a}b = 0$.
In this case, we know that $(M^3,g)$ is locally conformally balanced from Proposition \ref{prop1.7}.
Consider the two global real vector fields of unit length defined by
\[ \xi_1 = \frac{-\sqrt{-1}}{\sqrt{2}|a|}(a \overline{e}_3 - \overline{a} e_3 ),\quad
\xi_2 = J \xi_1 =\frac{- a}{\sqrt{2}|a|}( \overline{e}_3 + e_3 ).\]
Clearly $\mbox{span}_{\mathbb{C}} \{e_3,\overline{e}_3\} = \mbox{span}_{\mathbb{C}} \{\xi_1,\xi_2\}$ and $\nabla \xi_2=0$. Consider the distributions defined by
$$ F_1= \mbox{span}_{\mathbb{R}}\{\mathrm{Re}(e_1),\mathrm{Im}(e_1), \mathrm{Re}(e_2),\mathrm{Im}(e_2),\xi_1\}, \ \ \ \ F_2= \mbox{span}_{\mathbb{R}}\{ \xi_2 \}. $$
Then both $F_1$ and $F_2$ are globally defined on $M$ and are $\nabla$-parallel. Hence the universal covering space $\widetilde{M}$ is holomorphically isomorphic to $N \times \mathbb{R}$, where $N$ is generated by $F_1$.
Here we may call $N$ a {\em generalized $5$-Sasakian manifold.}
Note that in the special case  when $a=b=\frac{\lambda}{4}$, we have $d \omega = - \frac{1}{2}(\eta + \overline{\eta}) \omega$ and $(M^3,g)$ is a Vaisman threefold by Proposition \ref{prop1.8}.
We have completed the proof of Theorem \ref{3DNBBTP}.
\end{proof}

\begin{proof}[{\bf Proof of Corollary \ref{cor1.16}.}]
Let $(M^3,g)$ be a non-balanced BTP threefold. As the torsion is degenerate by Proposition \ref{BTP_LP} when $n=3$,
under an admissible frame $e$, we have $T^1_{13}=a_1$, $T^2_{23}=a_2$ which are the only possibly non-zero Chern torsion components.
Since $\nabla^be_3=0$, the Bismut connection matrix $\theta^b$ under $e$ satisfies $\theta^b_{3i}=\theta^b_{i3}=0$ for any $i$. So $\nabla^bT=0$ implies
$$ 0 = dT^2_{13} = \sum_r \big( T^2_{r3} \theta^b_{1r} + T^2_{1r} \theta^b_{3r} - T^r_{13} \theta^b_{r2} \big) = (a_2-a_1)\,\theta^b_{12}.$$
When $a_1\neq a_2$, we have $\theta^b_{12}=0$ hence $\theta^b$ is diagonal, and the Bismut curvature matrix $\Theta^b$ is also diagonal.
If we write the distribution generated by $e_i$ as $L_i$, for $1 \leq i \leq 3$, it is clear that $L_1, L_2, L_3$ are all globally defined (complex) line bundles, with the first two probably not holomorphic while the third holomorphic, as they are served as the eigenspaces of $\phi$ with respect to the eigenvalues $\lambda a_1, \lambda a_2,0$ respectively, which induces the decomposition $T_{M^3}=L_1 \oplus L_2 \oplus L_3$ of the holomorphic tangent bundle $T_{M^3}$. So in this case the global Bismut holonomy group $\mbox{Hol}(\nabla^b)$ is actually contained in $U(1)\times U(1)\times 1$.
When $a_1=a_2$, each is necessarily equal to $\frac{\lambda}{2}$ as $a_1+a_2=\lambda$. Proposition \ref{prop1.8} implies that $(M^3,g)$ is Vaisman. This completes the proof of Corollary \ref{cor1.16}.
\end{proof}

\begin{proof}[{\bf Proof of Corollary \ref{cor1.17}.}]
Let $(M^3,g)$ be a non-balanced BTP threefold. Again under an admissible frame $e$,
we have $T^1_{13}=a_1$, $T^2_{23}=a_2$ as the only possibly non-zero Chern torsion components
by the degenerate torsion condition from Proposition \ref{BTP_LP}.
Write $c=a_1\overline{a}_2+\overline{a}_1a_2$. By Proposition \ref{plcld}, we have
$$ \partial \overline{\partial} \omega = \frac{\sqrt{-1}}{4} \sum_{i,j,k,\ell} P^{j\ell }_{\,ik} \varphi_i\varphi_k \overline{\varphi}_j\overline{\varphi}_{\ell} , \ \ \ \ \ P^{j\ell}_{\,ik} = \sum_r \big( T^r_{ik}\overline{T^r_{j\ell} }   + T^j_{ir}\overline{T^k_{\ell r} } + T^{\ell}_{kr}\overline{T^i_{jr} } - T^{\ell}_{ir}\overline{T^k_{jr} } - T^j_{kr}\overline{T^i_{\ell r} } \big) .$$
Under our admissible frame, it is easy to see that $P^{j\ell}_{\,ik}=0$ whenever any of the indices is $3$, and the only non-trivial components of $P$ is $P^{12}_{\,12}=c$.
Thus $  \partial \overline{\partial} \omega = \frac{\sqrt{-1}}{4}c  \,\varphi_1\varphi_2 \overline{\varphi}_1\overline{\varphi}_2$.
Now assume that $g_0$ is a pluriclosed Hermitian metric on $M^3$, namely, its K\"ahler form $\omega_0$ satisfies $\partial \overline{\partial} \omega _0=0$. Then we have
$$ 0 = \int_M \omega \wedge \partial \overline{\partial} \omega_0=  \int_M \partial \overline{\partial} \omega \wedge \omega_0= \frac{\sqrt{-1}}{4}c \int_M \varphi_1\varphi_2 \overline{\varphi}_1\overline{\varphi}_2 \wedge \omega_0 , $$
as $\int_M \varphi_1\varphi_2 \overline{\varphi}_1\overline{\varphi}_2 \wedge \omega_0 \neq 0$, which implies $c=0$ and thus $g$ is BKL.
The corollary is thus proved.
\end{proof}


\vspace{0.3cm}

\section{Statements and declarations}

\noindent {\bf Statement on Conflict of Interest.}

All authors declare that they have no conflict of interest.

\vspace{0.3cm}

\noindent {\bf Data Availability Statement.}

Data sharing not applicable to this article as no datasets were generated or analysed during the current study.

%
%

\vspace{1cm}

\textbf{Acknowledgement}: We are grateful to the anonymous referees for their careful examination
and useful suggestion on the improvement of this paper.

\end{document}